\numberwithin{equation}{section}
\newtheorem{thm}{Theorem}[section]
\newtheorem{cor}[thm]{Corollary}
\newtheorem{prop}[thm]{Proposition}
\newtheorem{lemma}[thm]{Lemma}
\newtheorem{assum}[thm]{Assumption}
\newtheorem{defn}[thm]{Definition}
\newtheorem{remark}[thm]{Remark}
\newenvironment{proof}[1][Proof]{\textbf{#1.} }{\ \rule{0.5em}{0.5em}}
\newcommand\numberthis{\addtocounter{equation}{1}\tag{\theequation}}
\def\Ind{\mathop{\hskip0pt{1}}\nolimits}
\newcommand{\ceil}[1]{\left\lceil#1\right\rceil}
\newcommand{\floor}[1]{\left\lfloor#1\right\rfloor}
\newcommand{\Z}{\mathbb{Z}}
\newcommand{\pr}{P}
\newcommand{\Prob}[1]{\pr\left(#1\right)}
\newcommand{\Probn}[1]{\mathbb{P}_n\left(#1\right)}
\newcommand{\CProb}[2]{\pr\left(\left.#1\right|#2\right)}
\newcommand{\CProbn}[2]{\mathbb{P}_n\left(\left.#1\right|#2\right)}
\newcommand{\e}{ E}
\newcommand{\Exp}[1]{\e\left[#1\right]}
\newcommand{\Expn}[1]{\mathbb{E}_n\left[#1\right]}
\newcommand{\CExp}[2]{\e\left[\left.#1\right|#2\right]}
\newcommand{\Zout}{Z^{+}}
\newcommand{\Zin}{Z^{-}}
\newcommand{\tbtZout}{\hat{Z}^{+}}
\newcommand{\tbtZin}{\hat{Z}^{-}}
\newcommand{\limWout}{W^-}
\newcommand{\limWin}{W^+}
\title{Typical distances in the directed configuration model}
\author{
	Pim van der Hoorn \\ {\small Northeastern University}
	\and
	Mariana Olvera-Cravioto \\ {\small University of California, Berkeley}
}
\begin{document}

\maketitle

\begin{abstract}
	We analyze the distribution of the distance between two nodes, sampled uniformly at random, in 
	digraphs generated via the directed configuration model, in the supercritical regime. Under the assumption that the covariance between the  in-degree and out-degree is finite, we show that the distance grows logarithmically in the size of the graph. In contrast with the undirected case, this can happen even when the variance of the degrees is infinite.  The main tool in the analysis is a new coupling between a breadth-first graph exploration process and a suitable branching process based on the Kantorovich-Rubinstein metric. This coupling holds uniformly for a much larger number of steps in the exploration process than existing ones, and is therefore of independent interest. 

\vspace{5mm}
\noindent {\em Keywords:} random digraphs, directed configuration model, typical distances, branching processes, couplings, Kantorovich-Rubinstein distance. 
\end{abstract}


\section{Introduction}

When proposing a mathematical model for studying the typical characteristics of complex networks, one of the first things to try to mimic is the degree distribution, i.e., the proportion of nodes having a certain number of neighbors. Perhaps the easiest way to do this, is by sampling a random graph from a prescribed degree sequence through the configuration or pairing model, originally introduced and analyzed in \cite{Bollobas1980, Wormald_78}. In the undirected case, the construction of the graph begins by assigning to each node a number of stubs or half-edges according to the given degree sequence, and determines the edges by randomly pairing the stubs,  each time by choosing uniformly among all the unpaired stubs. Conditionally on the resulting graph having no multiple edges or self-loops, it is well known that it has the distribution of a uniformly chosen graph among all those having the corresponding degree sequence (see, e.g., \cite{Bollobas_2001, VanDerHofstad2007}). In the directed setting, each node is given a number of inbound and outbound stubs according to its in-degree and out-degree, and the pairing is done by matching an inbound half-edge with an outbound one. Again, conditionally on having no self-loops or multiple edges in the same direction, the resulting graph is uniformly chosen among those having the prescribed degrees. 

The versatility of the configuration model and its ability to match any prescribed degree distribution makes it useful for analyzing the structural properties of networks as well as of processes on them \cite{Goh2003,Miller2009,Newman2002a, Chen2014}. One such property is the typical distance between nodes. In particular, for the undirected configuration model constructed from an i.i.d.~degree sequence, it is known that the hopcount between two randomly chosen nodes in a graph with $n$ nodes, conditioned on them being in the same component, grows logarithmically in $n$ when the degree distribution has finite variance \cite{Hofstad2005, Esker2008}, as $\log\log n$ when it has infinite variance but finite mean \cite{Hofstad2005a}, and is bounded if the mean is infinite \cite{Esker2005}. These results reflect 
what has been observed in many real networks, i.e., that the typical distance between connected nodes is very 
small compared to the size of the network, and that this distance gets shorter the more variable the degrees are.  

In this paper, we provide an analysis of the distance between two randomly chosen nodes in the supercritical directed configuration model\footnote{The supercritical regime ensures the existence of a giant strongly connected component.}, conditioned on the existence of a directed path from one to the other, under the assumption that the covariance between in- and out-degree is finite. We focus on the supercritical regime, since the existence of a directed path between two randomly selected nodes is a rare event in the critical and subcritical regimes. The directed nature of the graphs introduces some subtle differences compared to the undirected case, starting with the problem of constructing degree sequences having a prescribed joint distribution. More precisely, in the undirected configuration model one can obtain a degree sequence having distribution $F$ by simply sampling i.i.d.~observations from $F$ and adding one to the last node in case the sum is odd \cite{Arr_Ligg_05}. For the directed case, on the other hand, one needs to guarantee that the sum of the in-degrees is equal to that of the out-degrees, an event that can have asymptotically zero probability (e.g., when the in-degree and out-degree are allowed to be different, and nodes are independent). 

A more important difference between the undirected and directed cases is that the dependence between the in- and out-degree in the latter plays an important role in the behavior of the distance between nodes. More precisely, the main contribution of this paper is a theorem stating that the hopcount, i.e., the length of the shortest directed path between two nodes, grows logarithmically in the number of nodes, which unlike in the undirected case, can occur even when the variance of the degrees is infinite. Intuitively, the length of the shortest directed path between any two nodes will always be larger than the shortest undirected path. However, what is surprising, is that this distance does not necessarily get shorter as the variability of the degrees grows larger, and whether it gets shorter or not depends on the level of dependence between the in- and out-degree. Together with prior results on the existence and the size of a giant strongly connected component in random directed graphs \cite{Cooper2004, Penrose2014}, our results provide valuable insights into the differences and similarities between the directed and undirected cases. 

The second contribution of the paper is a novel coupling between a breadth-first graph exploration process and a Galton-Watson tree. This coupling is based on the Kantorovich-Rubinstein distance  between two probability measures (see, e.g., \cite{Villani2008}), and has the advantage of being uniformly accurate for a considerably longer time than existing constructions. Specifically, the coupling holds for a number of steps in the graph exploration process equivalent to discovering $n^{1-\epsilon}$ nodes, for arbitrarily small $\epsilon > 0$, compared to a constant number of nodes in \cite{Penrose2014}, $n^{1/2-\epsilon}$ nodes in \cite{Norros2006} and \cite{Durrett2010} (Theorem~2.2.2), or $n^{1/2 + \epsilon_0}$ nodes, for a very small $\epsilon_0 > 0$, in \cite{Hofstad2005, Esker2008}.  Moreover, the coupled branching process has a deterministic offspring distribution that does not depend on $n$ or the degree sequences, avoiding the need to consider intermediate tree constructions. The generality of our main coupling result, and the wide range of applications where a so-called branching process argument is used, makes it of independent interest.

The paper is organized as follows: Section \ref{S.NotationAndMainResult} contains an overview of our results for the typical distance between two randomly chosen nodes, with the main theorem presented in Section \ref{SS.MainResult}. The corresponding assumptions are given in terms of the realized degree sequences, i.e., the fixed degree sequences from which the graphs are constructed according to the pairing model. In Section \ref{S.Examples} we provide an algorithm that can be used to generate degree sequences satisfying our main assumptions for any prescribed joint distribution. We also include in that section numerical examples validating the accuracy of our theoretical approximations for the hopcount. 
Our coupling results are given in Section \ref{S.Coupling}, and in Section \ref{S.Distances} we give a more detailed derivation of the main theorem. All the proofs are postponed until Section \ref{S.Appendix}.


\section{Notation and main results} \label{S.NotationAndMainResult}

Throughout the paper we consider a directed random graph generated via the directed configuration model (DCM), that is, given two sequences $\{d_1^-, d_2^-, \dots, d_n^-\}$ and $\{d_1^+, d_2^+, \dots, d_n^+\}$ of nonnegative integers satisfying 
$$l_n = \sum_{i=1}^n d_i^- = \sum_{i=1}^n d_i^+,$$
we construct the graph by assigning to each node $i \in \{1, 2, \dots, n\}$ a number of inbound and outbound half-edges according to $(d_i^-, d_i^+)$, respectively. To determine the edges in the graph we pair each inbound stub with an outbound stub chosen uniformly at random among all unpaired stubs.  This pairing process is equivalent to matching the inbound half-edges with a permutation, uniformly chosen at random, of the outbound half-edges. We refer to the sequence $({\bf d}^-, {\bf d}^+) = (\{d_1^-, \dots, d_n^-\}, \{d_1^+, \dots, d_n^+\})$ as the bi-degree sequence of the graph. 

Our analysis of the typical distances in the DCM will be done in the large graph limiting regime, i.e., when the number of nodes $n \to \infty$. This means that we are in fact considering a sequence of graphs, indexed by $n$ each having its own bi-degree sequence, say $({\bf d}_n^-, {\bf d}_n^+) =  (\{d_{n,1}^-, \dots, d_{n,n}^-\}, \{ d_{n,1}^+, \dots, d_{n,n}^+\} )$.

As mentioned in the introduction, sampling a bi-degree sequence $({\bf d}_n^-, {\bf d}_n^+)$ having a prescribed joint distribution is not as simple as in the undirected case, so we allow the bi-degree sequence itself to be generated through a random process, as long as the realized bi-degree sequence satisfies our regularity conditions with high probability. To emphasize the possibility that the bi-degree sequence may itself be random, we will use the notation $({\bf D}^-_n, {\bf D}^+_n)$ to refer to the bi-degree sequence of a graph on $n$ nodes.  In particular, we use $D_i^-$ and $D_i^+$ to denote the in-degree and out-degree, respectively, of node $i$, and use $L_n = \sum_{i = 1}^n D_i^- = \sum_{i = 1}^n D_i^+$ to denote the total number of edges in the graph. To show that bi-degree sequences satisfying our main assumptions are easy to construct, we provide in Section~\ref{SS.IIDAlgorithm} an algorithm based on 
i.i.d.~samples from the prescribed degree distribution.

In view of our previous remarks, we need to be able to distinguish between the unconditional probability space and the conditional probability space given the bi-degree sequence $({\bf D}^-_n, {\bf D}^+_n)$. To this end, let $\mathscr{F}_n$ denote the sigma-algebra generated by the bi-degree sequence $({\bf D}^-_n, {\bf D}^+_n)$, and define $\mathbb{P}_n$ and $\mathbb{E}_n$ to be the corresponding conditional probability and expectation, respectively, given $\mathscr{F}_n$, i.e., $\mathbb{P}_n (\cdot) = E[ 1(\cdot) | \mathscr{F}_n]$ and $\mathbb{E}_n[ \cdot] = E[ \cdot | \mathscr{F}_n]$. We point out that under the probability $\mathbb{P}_n$, the bi-degree sequence is fixed, as in the classical configuration model.

Before we can state the assumptions imposed in our main theorems, we need to define the following (random) probability mass functions:
\begin{align*}
	g_n^+(t) &= \frac{1}{n} \sum_{r=1}^n 1(D_r^+ = t), \hspace{15mm} g_n^-(t) = \frac{1}{n} 
		\sum_{r=1}^n 1(D_r^- = t), \\
	f_n^+(t) &= \frac{1}{L_n} \sum_{r=1}^n 1(D_r^+ = t) D_r^-, \qquad f_n^-(t) = \frac{1}{L_n} 
		\sum_{r=1}^n 1(D_r^- = t) D_r^+,
\end{align*}
for $t = 0, 1, 2, \dots$, and let $G_n^+, G_n^-, F_n^+, F_n^-$ denote their corresponding cumulative distribution functions.  

We point out that the probability mass functions $g_n^+$ and $g_n^-$ correspond to the marginal distributions of the out-degree and in-degree, respectively, of a uniformly chosen node in the graph, while $f_n^+$ (resp. $f_n^-$) is the distribution of the out-degree (resp. in-degree) of a uniformly chosen inbound (resp. outbound) neighbor of that node, also known as the size-biased out-degree (resp. in-degree) distribution.  

{\bf Notation:} Throughout the manuscript we use the superscript $\pm$ to mean that the 
property/result holds for the distributions or random variables with the $\pm$ symbol substituted 
consistently with either the $+$ or $-$ symbol. 

The main assumption needed throughout the paper is given below. 

\begin{assum} \label{A.Wasserstein}
The  bi-degree sequence $({\bf D}^-_n, {\bf D}^+_n)$ satisfies:
\begin{enumerate}
	\item There exist probability mass functions $g^+, g^-, f^+$ and $f^-$ on the non negative 
	integers, such that, for some $\varepsilon > 0$, 
	\[
		\sum_{k=0}^\infty  \left| \sum_{i=0}^{k} \left( g_n^\pm(i)  - g^\pm(i) \right) \right| \leq 
		n^{-\varepsilon} \qquad \text{and} \qquad \sum_{k=0}^\infty  \left| \sum_{i=0}^{k} \left( 
		f_n^\pm(i)  - f^\pm(i) \right) \right| \leq n^{-\varepsilon},
	\]
	with $\nu \triangleq \displaystyle \sum_{j=0}^\infty j g^+(j) = \sum_{j=0}^\infty j g^-(j) < 
	\infty$ and $\mu \triangleq \displaystyle \sum_{j=0}^\infty j f^+(j) = \sum_{j=0}^\infty j 
	f^-(j) \in (1, \infty)$.
\item For some $0 < \kappa \leq 1$ and some constant $K_\kappa < \infty$, 
\[
	\sum_{r=1}^n ((D_r^-)^\kappa + (D_r^+)^\kappa) D_r^+ D_r^- \leq K_\kappa n.
\]
\end{enumerate}
\end{assum}

\begin{remark}
Note that by requiring that $\mu > 1$, we are assuming that the graph is in the supercritical regime, where
with high probability there exists a unique strongly connected component of linear size, see \cite{Cooper2004,Penrose2014}. In this regime, the probability that there exists a directed path between the two randomly chosen nodes is asymptotically positive, while it is a rare event in both the critical ($\mu = 1$) and subcritical ($\mu < 1$) cases.
\end{remark}

To provide some insights into these assumptions and relate them to the construction of the coupling in Section~\ref{S.Coupling}, it is useful to define first the Kantorovich-Rubinstein distance (also known as Wasserstein metric of order one), which is a metric on the space of probability measures. In particular, convergence in this sense is equivalent to weak convergence plus convergence of the first absolute moments. 

\begin{defn} \label{d.wasserstein}
    Let $M(\mu, \nu)$ denote the set of joint probability measures on $\mathbb R \times \mathbb R$  with marginals $\mu$ and $\nu$. Then, the Kantorovich-Rubinstein distance between $\mu$ and $\nu$ is given by
$$d_1(\mu, \nu) = \inf_{\pi \in M(\mu, \nu)} \int_{\mathbb R\times \mathbb R} | x - y | \, d \pi(x, y).$$
\end{defn}

We point out that $d_1$ is only strictly speaking a distance when both $\mu$ and $\nu$ have finite first absolute moments. Moreover, it is well known that
  \begin{equation*}
  d_1(\mu, \nu) = \int_{0}^1 | F^{-1}(u) - G^{-1}(u) | du = \int_{-\infty}^\infty | F(x) - G(x) | dx,
  \end{equation*}
where $F$ and $G$ are the cumulative distribution functions of $\mu$ and $\nu$, respectively, and $f^{-1}(t) = \inf\{ x \in \mathbb{R}: f(x) \geq t\}$ denotes the pseudo-inverse of $f$. It follows that the optimal coupling of two real random variables $X$ and $Y$ is given by $(X, Y) = (F^{-1}(U), G^{-1}(U))$, where $U$ is uniformly distributed in $[0, 1]$.

With some abuse of notation, for two distribution functions $F$ and $G$ we use $d_1(F,G)$ to denote the Kantorovich-Rubinstein distance between their corresponding probability measures. We refer the interested reader to \cite{Villani2008} for more details. 

\begin{remark} \label{R.LimitMoments} \begin{itemize} 
\item [(i)] In terms of the previous definition, the first condition in Assumption \ref{A.Wasserstein} can also be written as 
\[
	d_1(G_n^\pm, G^\pm) \leq n^{-\varepsilon} \qquad \text{and} \qquad d_1(F_n^\pm, F^\pm) \leq 
	n^{-\varepsilon}.
\]
Furthermore, since  
\[
	\nu_n = \frac{L_n}{n} \qquad \text{and} \qquad \mu_n = \frac{1}{L_n} \sum_{r=1}^n D_r^- D_r^+
\]
are the common means of $g_n^+, g_n^-$, and $f_n^+, f_n^-$, respectively, it follows from Definition \ref{d.wasserstein}  that 
\[
	\left| \nu_n - \nu \right| \leq n^{-\varepsilon} \qquad \text{and} 
	\qquad \left| \mu_n - \mu \right| \leq n^{-\varepsilon}.
\]
 Hence, the first set of assumptions simply state that the empirical degree distributions and the empirical size-biased degree distributions converge weakly, along with their means. 
 
\item[(ii)] The second condition in Assumption \ref{A.Wasserstein} implies that
\[
	\sum_{i=0}^\infty i^{1+\kappa} f^\pm(i) \leq \liminf_{n \to \infty} \sum_{i=0}^\infty 
	i^{1+\kappa} f^\pm_n(i) \leq K_\kappa/\nu < \infty,
\]
i.e., $f^\pm$ has finite moments of order $1+\kappa$. 

\item[(iii)] We point out that any bi-degree sequence satisfying Assumption \ref{A.Wasserstein} will also be ``proper" in the sense of \cite{Cooper2004}, provided that $g^+$ and $g^-$ have finite variance and that the maximum degree is smaller or equal than $n^{1/2}/\log n$. Hence, under these additional conditions, the results in \cite{Cooper2004} regarding the bow-tie structure of the supercritical directed configuration model hold. 
\end{itemize}
\end{remark}

Since, as mentioned earlier, the bi-degree sequence $({\bf D}_n^-, {\bf D}_n^+)$ may itself be generated through a random 
process, we only require that Assumption \ref{A.Wasserstein} holds with high probability. More precisely, if we let 
\begin{align*}
\Omega_n &= \left\{ \max\left\{ d_1(G_n^+, G^+), d_1(G_n^-, G^-), d_1(F_n^+, F^+), d_1(F_n^-, F^-) \right\} \leq n^{-\varepsilon} \right\} \\
&\hspace{5mm} \cap \left\{  \sum_{r=1}^n ((D_r^-)^\kappa + (D_r^+)^\kappa) D_r^+ D_r^- \leq K_\kappa n \right\},
\end{align*}
then our condition will be that $P(\Omega_n) \to 1$ as $n \to \infty$. In Section \ref{SS.IIDAlgorithm} we show that the {\em I.I.D. Algorithm} presented there satisfies this condition.

\subsection{Main result} \label{SS.MainResult}

Our main result, Theorem~\ref{thm:main_result} below, establishes that the distance between two randomly chosen nodes grows logarithmically in the size of the graph, and characterizes the spread around the logarithmic term. 

In the statement of our results, we use $H_n$ to denote the hopcount, or distance, between two randomly chosen nodes in a graph of size $n$. Since the graph is directed, we say that the hopcount between node $i$ and node $j$ is $k$ if there exists a directed path of length $k$ from $i$ to $j$; if there is no directed path from $i$ to $j$ we say that the hopcount is infinite. Since the two nodes are chosen at random, we can assume without loss of generality that $H_n$ is the hopcount from the first node to the second one. 

The last thing we need to do before stating Theorem~\ref{thm:main_result} is to introduce the limiting random variables appearing in the characterization of the hopcount. To this end, let $g^\pm$ and $f^\pm$ be the probability mass functions from Assumption~\ref{A.Wasserstein}. Throughout the paper we will use $\{\hat{Z}^\pm_k: k \ge 0 \}$, $\hat{Z}^\pm_0 = 1$, to denote a delayed Galton-Watson process where nodes in the tree have offspring according to distribution $f^\pm$, with the exception of the root node which has a number of offspring distributed according to $g^\pm$. Note that $W^\pm_k = \hat Z_k^\pm /(\nu \mu^{k-1})$ is a mean one martingale with respect to the filtration generated by the process $\{\hat Z_k^\pm : k \geq 1\}$. 
Hence, by the martingale convergence theorem, 
\[
	W^\pm = \lim_{k \to \infty} \hat Z_k^\pm/(\nu \mu^{k-1}) \quad \text{a.s.}
\]
exists and satisfies $E[W^\pm] \leq 1$. 

To see that under Assumption~\ref{A.Wasserstein} $W^+$ and $W^-$ are non-trivial, it is useful to 
define first  $\{ \mathcal{Z}_k^\pm: k \geq 0\}$ to be a (non-delayed) Galton-Watson process having 
offspring distribution $f^\pm$ and let
\[
	\mathcal{W}^\pm = \lim_{k \to \infty} \mathcal{Z}_k^\pm/\mu^k \quad \text{a.s.}
\]
be its corresponding martingale limit. Now recall from Remark \ref{R.LimitMoments} that $f^+$ and $f^-$ have finite moments of order 
$1+\kappa > 1$, which implies that $\sum_{j=1}^\infty j \log j f^\pm(j) < \infty$, a necessary and 
sufficient condition for $\mathcal{W}^\pm$ to be non-trivial (see, e.g., \cite{Athreya2012}). More 
precisely, if $q^\pm = P( \mathcal{Z}^\pm_m = 0 \text{ for some $m$})$ denotes the probability of 
extinction of $\{\mathcal{Z}_k^\pm: k \geq 0\}$, then $q^\pm < 1$ and $P(\mathcal{W}^\pm = 0) = 
q^\pm$. It follows from these observations that $E[ W^\pm ] = 1$ (Lemma \ref{L.Extinction} contains an expression for $P(W^\pm = 0)$). 

We are now ready to state the main result of the paper; $\lfloor x \rfloor$ denotes the largest integer smaller or equal to $x$. 

\begin{thm}\label{thm:main_result}
	Let $\{ \mathcal{G}_n: n \geq 1\}$ be a sequence of graphs generated through the DCM from a sequence of 
	bi-degree sequences $\{({\bf D}_n^-, {\bf D}_n^+): n \geq 1\}$ 	satisfying $P(\Omega_n) \to 1$ as 
	$n \to \infty$. Let $H_n$ denote the hopcount between two randomly chosen nodes in $\mathcal{G}_n$. Then, 
	there exist random variables $\{\mathcal{H}_n\}_{n  \in \mathbb{N}}$ such that for each (fixed) 
	$t \in \Z$,
	\begin{equation}
		\lim_{n \to \infty} \left|\CProb{H_n - \floor{\log_\mu n} = t}{H_n < \infty} 
		- \Prob{\mathcal{H}_{n} = t}\right| = 0,
	\label{eq:main_result}
	\end{equation}
	where $\mathcal{H}_n$ has distribution
	\begin{equation}
	\Prob{\mathcal{H}_n \leq x} = 1 - E\left[ \left. \exp\left\{-\frac{\nu}{\mu - 1} \cdot
	\frac{\mu^{\floor{\log_\mu n} + \lfloor x \rfloor}}{n} \limWin \limWout \right\} \right| \limWin \limWout > 0 
	\right], \qquad x \in \mathbb{R}.
\label{eq:main_result_R}
\end{equation}
\end{thm}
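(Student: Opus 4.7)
The plan is to run a two-sided breadth-first search from the two randomly chosen endpoints, couple each exploration to a delayed Galton--Watson tree via the construction of Section~\ref{S.Coupling}, and then estimate the probability that the two neighborhoods fail to meet. Label the source and target as nodes $1$ and $2$ and write $\mathcal F_k$ (resp.\ $\mathcal B_k$) for the set of nodes at directed distance at most $k$ from node $1$ (resp.\ to node $2$ in the reversed graph), so that $H_n \le k_1 + k_2$ iff $\mathcal F_{k_1}\cap \mathcal B_{k_2}\neq \emptyset$. For fixed $t\in\Z$, I would split the target depth as $k_n^+ + k_n^- = \floor{\log_\mu n}+t$ with both $k_n^\pm \approx (1/2)\log_\mu n$; in particular each exploration ends up discovering at most $n^{1/2+o(1)}\ll n^{1-\varepsilon}$ nodes, well within the validity range of the coupling.

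By the coupling of Section~\ref{S.Coupling}, on $\Omega_n$ the forward and backward explorations agree (up to an event of probability $o(1)$) with two independent delayed Galton--Watson trees $\{\hat Z^+_k\}$ and $\{\hat Z^-_k\}$, with root offspring distributions $g^\pm$ and descendant distributions $f^\pm$. The martingale convergence $\hat Z^\pm_k/(\nu \mu^{k-1})\to W^\pm$ a.s.\ (non-degenerate thanks to the $(1+\kappa)$-moment bound of Assumption~\ref{A.Wasserstein}, as recorded in Remark~\ref{R.LimitMoments}) gives frontier size $\hat Z^\pm_{k_n^\pm+1}\approx \nu\mu^{k_n^\pm}W^\pm$ and cumulative size $|\mathcal F_{k_n^\pm}|\approx \nu\mu^{k_n^\pm}W^\pm/(\mu-1)$. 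I would then condition on $(\mathcal F_{k_n^+},\mathcal B_{k_n^-})$ and use exchangeability of the DCM pairing to argue that the number of edges from $\mathcal F_{k_n^+}$ to $\mathcal B_{k_n^-}$ is asymptotically Poisson with mean equal to (total out-stubs paired in the forward exploration) $\times$ (free in-stubs of $\mathcal B_{k_n^-}$) $/L_n$. Since the first factor is $|\mathcal F_{k_n^+}|-1\approx \nu\mu^{k_n^+}W^+/(\mu-1)$, the second is $\hat Z^-_{k_n^-+1}\approx \nu\mu^{k_n^-}W^-$ (the in-stubs of the backward frontier are all still unpaired because backward BFS has stopped at depth $k_n^-$), and $L_n\approx \nu n$, the Poisson mean is
\[
\frac{1}{L_n}\cdot\frac{\nu\mu^{k_n^+}}{\mu-1}W^+ \cdot \nu\mu^{k_n^-}W^-\;\approx\;\frac{\nu}{\mu-1}\cdot\frac{\mu^{k_n^+ + k_n^-}}{n}\,W^+ W^-,
\]
whose void probability is the exponential appearing in \eqref{eq:main_result_R}.

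To finish I would write $P(H_n > k_n^+ + k_n^-)=P(H_n=\infty)+P(k_n^+ + k_n^- < H_n < \infty)$. The first term converges to $1-P(W^+W^->0)$, because a random source (resp.\ target) fails to reach (resp.\ be reached by) arbitrarily long directed paths precisely when the coupled branching process goes extinct, so $P(H_n<\infty)\to P(W^+W^->0)$. Dividing the second term by $P(H_n<\infty)$ gives the conditional limit stated in~\eqref{eq:main_result_R}. The main obstacle is the Poisson step: one must justify that the sequential pairings of the DCM are well approximated by a single Poisson limit, uniformly in the random sizes $\hat Z^\pm_{k_n^\pm}$, while simultaneously controlling depletion of the unpaired-stub pool and the coupling error. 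This is precisely where the $n^{1-\varepsilon}$ validity of the coupling in Section~\ref{S.Coupling} is essential: after each exploration reaches size $n^{1/2+o(1)}$, the coupling still leaves enough room to analyze the ``sprinkling'' pairings that link the two frontiers via a Poisson limit theorem, whereas the previously available $n^{1/2\pm o(1)}$ couplings would fail at exactly this step.
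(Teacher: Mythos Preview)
Your outline is essentially correct and lands on the right limiting distribution, but the paper takes a somewhat different technical route than the Poisson heuristic you describe.

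Where you split $k=k_n^++k_n^-$, explore each side in full, and then invoke a Poisson approximation for the number of linking edges, the paper instead \emph{alternates} the two explorations one generation at a time and derives an \emph{exact} product formula for $\mathbb P_n(H_n>k)$ (equation~\eqref{eq:hopcount_main}): conditionally on $\{H_n>i-2\}$ and the history, the event $\{H_n>i-1\}$ is precisely that the $Z^+_{\lceil i/2\rceil}$ fresh outbound stubs avoid the $Z^-_{\lfloor i/2\rfloor}$ fresh inbound stubs, which gives a hypergeometric product. The passage from this product to the exponential is then handled not by a Poisson limit theorem but by elementary Taylor-type inequalities (Lemma~\ref{L.Taylor}), yielding Proposition~\ref{P.HopCount}. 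The replacement of $\sum_i \hat Z^+_{\lceil i/2\rceil}\hat Z^-_{\lfloor i/2\rfloor}/(\nu n)$ by $\nu\mu^k W^+W^-/((\mu-1)n)$ is a separate pure branching-process estimate (Proposition~\ref{P.BPapprox}), and $\mathbb P_n(H_n<\infty)\to s^+s^-$ is proved as Corollary~\ref{C.FiniteHopcount} using Theorem~\ref{T.MainTheoremBody} at $k=\omega_n\approx 2(1-\delta)\log_\mu n$.

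The upshot is that the ``main obstacle'' you flag---making the Poisson/sprinkling step rigorous uniformly in the random frontier sizes---simply does not arise in the paper's argument: the alternating product formula is exact at every step, so one only needs the coupling (Corollary~\ref{C.MainCoupling}) on each side separately plus deterministic analytic bounds. Your approach would also work, but would require an additional second-moment or Stein-type argument that the paper avoids entirely. A minor point: in your Poisson-mean calculation you identify ``total out-stubs paired in the forward exploration'' with $|\mathcal F_{k_n^+}|-1$; in the paper's bookkeeping the relevant quantity is $\sum_{j=1}^{k_n^+} Z_j^+$, and the alternating scheme makes it transparent that at each step only the \emph{current} frontiers $Z^\pm_{\lceil i/2\rceil},Z^\mp_{\lfloor i/2\rfloor}$ interact, which is what produces the sum $\sum_i Z^+_{\lceil i/2\rceil}Z^-_{\lfloor i/2\rfloor}$ whose geometric growth yields the factor $\mu^k/(\mu-1)$.
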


As a straightforward corollary, we obtain the asymptotic equivalence of $H_n$ and $\log_\mu n$ in probability.
\begin{cor}
Under the same assumptions as Theorem~\ref{thm:main_result}, and for any $\epsilon > 0$, 
$$\lim_{n \to \infty} P\left( \left. \left| \frac{H_n}{\log_\mu n} - 1 \right| > \epsilon \right| H_n < \infty \right) = 0.$$
\end{cor}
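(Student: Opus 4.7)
The plan is to deduce the corollary from Theorem~\ref{thm:main_result} by a tightness argument in three steps: (i) show that the family of limit laws $\{\mathcal{H}_n\}_{n \geq 1}$ is tight uniformly in $n$; (ii) upgrade this to tightness of the conditional distribution of $T_n := H_n - \floor{\log_\mu n}$ given $H_n < \infty$; (iii) observe that since $\epsilon \log_\mu n \to \infty$, the event $\{|H_n/\log_\mu n - 1| > \epsilon\}$ forces $|T_n|$ to exceed any fixed threshold.

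For step (i), I would exploit the explicit formula \eqref{eq:main_result_R} together with the elementary bound $\mu^{\floor{\log_\mu n}}/n \in [\mu^{-1}, 1]$ to produce $n$-independent upper bounds on $\Prob{\mathcal{H}_n > M}$ and $\Prob{\mathcal{H}_n \leq -M-1}$ of the qualitative form $\CExp{e^{-c\, \mu^{M} \limWin \limWout}}{\limWin \limWout > 0}$ and $1 - \CExp{e^{-c\, \mu^{-M} \limWin \limWout}}{\limWin \limWout > 0}$ with $c = \nu/(\mu-1)$. Both decay to zero as $M \to \infty$ by dominated convergence, using that $\limWin \limWout > 0$ under the conditioning.

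For step (ii), note that the right-hand side of \eqref{eq:main_result_R} depends on $x$ only through $\floor{x}$, so each $\mathcal{H}_n$ is supported on $\Z$. Summing the pointwise estimate in \eqref{eq:main_result} over the finite window $\{-M,\dots,M\}$ gives
\[
    \CProb{|T_n| > M}{H_n < \infty} \leq \Prob{|\mathcal{H}_n| > M} + \sum_{t=-M}^{M}  \left|\CProb{T_n = t}{H_n < \infty} - \Prob{\mathcal{H}_n = t}\right|,
\]
where the finite sum vanishes as $n \to \infty$ by \eqref{eq:main_result}. Combining with step (i): given $\delta > 0$, pick $M$ so that $\Prob{|\mathcal{H}_n| > M} < \delta$ for all $n$ and then $n$ large enough that the finite sum is below $\delta$. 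Finally, enlarging $n$ further so that $\epsilon \log_\mu n - 1 > M$, the inclusion $\{|H_n/\log_\mu n - 1| > \epsilon\} \subseteq \{|T_n| > \epsilon \log_\mu n - 1\} \subseteq \{|T_n| > M\}$ yields $\limsup_n \CProb{|H_n/\log_\mu n - 1| > \epsilon}{H_n < \infty} \leq 2\delta$, and letting $\delta \downarrow 0$ concludes the proof.

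No serious obstacle is expected; the entire argument reduces to Theorem~\ref{thm:main_result} plus elementary tightness of the explicit law in \eqref{eq:main_result_R}. The only point requiring mild care is that \eqref{eq:main_result} provides pointwise convergence of the integer mass functions rather than convergence in total variation, which is why the argument must first truncate to a finite window of size depending only on the desired tolerance before invoking it.
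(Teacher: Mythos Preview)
Your argument is correct. The paper itself offers no proof of this corollary beyond the remark that it is ``a straightforward corollary'' of Theorem~\ref{thm:main_result}, so there is nothing to compare against; your tightness argument is precisely the natural way to make that remark rigorous, and the only route the theorem actually supports given that \eqref{eq:main_result} is stated pointwise in $t$.
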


Theorem \ref{thm:main_result} shows that the directed distance between two randomly chosen nodes in the DCM scales logarithmically in the size of the graph, which is consistent with existing results for the undirected configuration model (CM) under the assumption that the degree distribution has finite variance \cite{Hofstad2005}. We remark that no assumption is made concerning the simplicity of the graph, since the hopcount is unaffected by the existence of multiple edges and self-loops. For instance, removing all self-loops and merging all duplicate edges into a single edge, as is done in the erased configuration model, will not change the hopcount.

To understand the result of Theorem \ref{thm:main_result}, including the appearance of the martingale limits $W^\pm$, note that directed graphs with high connectivity
consist of a strongly connected component (SCC), a set of nodes with directed paths going into the SCC (the inbound wing), a set of nodes with directed paths exiting the SCC (the outbound wing), and
some additional secondary structures. This, so-called, bow-tie structure has been observed experimentally in the web graph \cite{Broder2000}, and has been established for the supercritical directed configuration model in \cite{Cooper2004}; see also \cite{Timar2016} for a more detailed analysis of the secondary structures. More precisely, the work in \cite{Cooper2004}, shows that the inbound wing consists of nodes whose in-component is of linear size but whose out-component is small (i.e., of order $o(n)$), the outbound wing consists of nodes whose out-component is of linear size but whose in-component is small, and the SCC is the set of nodes having both linear size in-component and linear size out-component. The branching processes $\hat Z_k^+$ and $\hat Z_k^-$ describe the breadth-first exploration process of the out-component of the first randomly chosen node and the in-component of the second one, respectively, whose sizes are approximately $W^+ \nu \mu^{k-1}$ and $W^- \nu \mu^{k-1}$. We refer the reader to
\cite{Hofstad2005a} (pp. 712-714) for a more detailed explanation relating the hopcount with the branching processes appearing in the limit.

The interesting difference between the directed and undirected cases lies in the observation that Assumption~\ref{A.Wasserstein} can hold with high probability for degree sequences having infinite variance (as shown in Section \ref{SS.IIDAlgorithm}), hence showing that the distance remains logarithmic even when in its undirected counterpart becomes of order $\log\log n$ \cite{Hofstad2005a}. To explain this, note that distances in the CM get smaller as the degree distribution gets heavier (i.e., more variable) presumably because of the appearance of nodes with extremely large degrees, that should create shortcuts between nodes in their connected component\footnote{The role that high degree nodes play in the creation of shortcuts is best understood through the notion of betweenness centrality, which computes the fraction of shortest paths that go through a given node. For the undirected configuration model, it was shown numerically in \cite{Goh2003} that the betweenness centrality is positively correlated with the degree, which is consistent with our intuitive explanation of why distances get smaller the more spread out the degree distribution becomes.}. In contrast, when the graph is directed, increasing the variability of the in- and out-degree distributions does not necessarily imply the appearance of more shortcuts, since even if there are more nodes with very large in-degrees or very large out-degrees, they may not be the same nodes, e.g., when the in-degree is independent of the out-degree, it is unlikely that a node has both large in-degree and large out-degree. Our results are consistent with the intuition that if the nodes with very large in-degrees are the same as those with very large out-degrees (i.e., positively correlated in- and out-degrees), then more shortcuts should be created and the distances will get smaller.

To complement the main theorem we also compute the asymptotic probability that the hopcount is finite, which can be expressed in terms of the survival properties of the delayed branching processes $\{\hat{Z}^+_k: k 
\geq 1\}$ and $\{\hat Z^-_k: k \geq 1\}$.

\begin{prop}\label{prop:finite_paths}
	Let $\{ \mathcal{G}_n: n \geq 1\}$ be a sequence of graphs generated through the DCM from a sequence of 
	bi-degree sequences $\{({\bf D}_n^-, {\bf D}_n^+): n \geq 1\}$ 	satisfying $P(\Omega_n) \to 1$ as 
	$n \to \infty$ let $H_n$ denote the hopcount between two randomly chosen nodes in $\mathcal{G}_n$. Then,
	\begin{equation}
		\lim_{n \to \infty} \Prob{H_n < \infty} = s^+ s^-,
	\label{eq:prob_path_exists}
	\end{equation}
	where $s^\pm = P(W^\pm > 0)$.
\end{prop}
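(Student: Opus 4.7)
The plan is to express $\{H_n < \infty\}$ as the meeting event of two parallel breadth-first explorations and apply the coupling of Section~\ref{S.Coupling} simultaneously to each. Note that $H_n < \infty$ is precisely the event that node $2$ lies in the out-component $\mathcal{C}_1^+$ of node $1$, equivalently, that the forward BFS rooted at node $1$ and the backward BFS rooted at node $2$ intersect before either dies out. I would run these explorations jointly, one pairing at a time, and observe that as long as the two discovered vertex sets remain disjoint the processes consume disjoint pools of stubs; the coupling of Section~\ref{S.Coupling} then extends to a joint coupling of the two BFS's with two \emph{independent} delayed Galton--Watson processes $\{\hat Z_k^+\}$ and $\{\hat Z_k^-\}$ having root laws $g^\pm$ and offspring laws $f^\pm$.

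Fix a small $\delta > 0$ and set $k_n = \lfloor (1/2+\delta) \log_\mu n \rfloor$, and let $\mathcal{S}^\pm$ denote the event that the forward (resp.\ backward) BFS has not died out by generation $k_n$. Since the cumulative size of each exploration at depth $k_n$ is of order $\mu^{k_n} = n^{1/2+\delta}$, well within the range of the coupling, one obtains $\mathcal{S}^\pm = \{\hat Z^\pm_{k_n} > 0\}$ with probability $1 - o(1)$. The martingale convergence $\hat Z^\pm_k / (\nu \mu^{k-1}) \to W^\pm$ a.s., the identification $\{W^\pm > 0\} = \bigcap_{k \geq 1}\{\hat Z^\pm_k > 0\}$ a.s., and the independence supplied by the joint coupling then yield $P(\mathcal{S}^+ \cap \mathcal{S}^-) \to s^+ s^-$. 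It remains to show that $\{H_n < \infty\}$ agrees with $\mathcal{S}^+ \cap \mathcal{S}^-$ up to an event of vanishing probability. On $(\mathcal{S}^+)^c$ the forward exploration has died out and $|\mathcal{C}_1^+|$ is bounded via the coupling by the a.s.-finite total progeny of $\{\hat Z_k^+\}$ on extinction, so the probability that the uniformly chosen node $2$ happens to lie in $\mathcal{C}_1^+$ is $o(1)$, and symmetrically on $(\mathcal{S}^-)^c$. Conversely, on $\mathcal{S}^+ \cap \mathcal{S}^-$ each boundary carries $\Theta(n^{1/2+\delta})$ unmatched half-edges of the relevant type, and performing one further layer of the forward BFS pairs $\Theta(n^{1/2+\delta})$ out-half-edges, each of which lands on a $\mathcal{C}_2^-$ in-half-edge with probability of order $n^{-1/2+\delta}$; the probability of no meeting is thus at most $\exp(-c n^{2\delta}) \to 0$. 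Combining gives $P(H_n < \infty) \to s^+ s^-$.

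The main technical obstacle will be the \emph{joint} coupling step: both explorations sample from the same global pool of unmatched in- and out-stubs, so their increments are a priori correlated through depletion of that pool, yet the argument requires the two limiting Galton--Watson processes to be independent. The remedy is to verify that as long as the union of the two explored sets has size $o(n)$, the per-pairing depletion is $o(1)$ and the single-exploration coupling of Section~\ref{S.Coupling} can be reapplied, generation by generation, in the joint setting. The remainder is routine: standard martingale asymptotics for supercritical Galton--Watson processes, together with a Chernoff-type estimate for the final collision argument.
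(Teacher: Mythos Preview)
Your approach is correct in outline and is genuinely different from the paper's. The paper obtains Proposition~\ref{prop:finite_paths} as a consequence of the much stronger Corollary~\ref{C.FiniteHopcount}, which in turn rests on the full Theorem~\ref{T.MainTheoremBody}: it plugs $k=\omega_n=2(1-\delta)\log_\mu n$ into the approximation
\[
\mathbb{P}_n(H_n>k)=E\Bigl[\exp\Bigl\{-\tfrac{\nu\mu^{k}}{(\mu-1)n}W^+W^-\Bigr\}\Bigr]+O(n^{-c}),
\]
splits according to $\mathcal{B}=\{W^+W^->0\}$, and then separately controls $\mathbb{P}_n(\omega_n<H_n<\infty)$ and the tail behaviour of $W^\pm$ near zero (invoking the density estimates of Lemma~\ref{L.Martingale}). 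In other words, the paper passes through the explicit product formula~\eqref{eq:hopcount_main} and the full martingale-limit machinery of Propositions~\ref{P.HopCount} and~\ref{P.BPapprox}. Your route is the standard survival-plus-collision decomposition and is more elementary: it uses only Corollary~\ref{C.MainCoupling} at depth $k_n\approx(1/2+\delta)\log_\mu n$, the identification of BFS survival with BP survival, and a birthday computation. The price is that you recover only the limit statement, not the $O(n^{-c})$ rate that Corollary~\ref{C.FiniteHopcount} delivers; since Proposition~\ref{prop:finite_paths} asks only for the limit, that is fine.

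Two points in your write-up are imprecise and would need tightening. First, on $\mathcal{S}^+\cap\mathcal{S}^-$ the boundary sizes are not $\Theta(n^{1/2+\delta})$ deterministically but rather $W^\pm_{k_n}\cdot\nu\mu^{k_n-1}$ with $W^\pm_{k_n}$ random, so the no-collision probability after one further layer is approximately $\exp\{-c\,W^+_{k_n}W^-_{k_n}\,n^{2\delta}\}$ rather than $\exp\{-cn^{2\delta}\}$; you then need either dominated convergence (which suffices, since the integrand is bounded by $1$ and tends to $0$ pointwise on $\{W^+W^->0\}$) or the lower-tail input $P(0<W^\pm\le n^{-1/4})\to 0$ that the paper establishes inside the proof of Lemma~\ref{L.Martingale}. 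Second, your ``disjoint pools of stubs'' phrasing is not literally accurate---both explorations remove inbound \emph{and} outbound stubs from the common pool---but your resolution is the right one: the total depletion is $O(n^{1/2+\delta})=o(n)$, so the Kantorovich--Rubinstein error bounds of Lemma~\ref{L.KRdistance} are unaffected and the single-exploration coupling transfers to the joint setting. This is exactly what the paper does implicitly in the proof of Proposition~\ref{P.HopCount}, where the two coupled pairs $\{(Z^+_i,\hat Z^+_i)\}$ and $\{(Z^-_i,\hat Z^-_i)\}$ are constructed ``independently of each other'' and Corollary~\ref{C.MainCoupling} is applied to each.
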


To provide some insights into this probability, we refer again to the bow-tie structure of the supercritical directed configuration model, where $S$ is the SCC, $K^-$ is the inbound wing, and $K^+$ is the outbound wing. As the work in \cite{Cooper2004} shows, if we let $L^-$ and $L^+$ denote the set of nodes with in-component, respectively out-component, of linear size, then $S = L^- \cap L^+$, $K^- = L^- \cap (L^+)^c$ and $K^+ = L^+ \cap (L^-)^c$. Moreover, the proof of our main coupling result (Theorem \ref{thm:main_coupling}) shows that  $s^+$  ($s^-$) is the asymptotic probability that a randomly chosen node in the graph belongs to $L^+$ ($L^-$), which is consistent with Theorem~1.2\footnote{See Remark~\ref{R.LimitMoments}(iii).} in \cite{Cooper2004}, suggesting that the bow-tie structure proved there may hold even under the weaker assumptions of this paper.

With respect to the martingale limits $W^+$ and $W^-$ appearing in \eqref{eq:main_result_R}, we point out that although it is in general difficult to compute them analytically, it can easily be done numerically, e.g., by using the Population Dynamics algorithm described in \cite{Chen2014}. We use this algorithm in Section \ref{S.Examples} below for validating our theoretical results.


\section{Construction of a bi-degree sequence and numerical examples}\label{S.Examples}

To illustrate the accuracy of the approximation for the hopcount between two randomly chosen nodes provided by Theorem~\ref{thm:main_result}, we give in this section several numerical examples for different choices of the bi-degree sequence. This requires us to construct a sequence of bi-degree sequences $\left\{({\bf D}_n^-, {\bf D}_n^+) : n \ge 1\right\}$ satisfying Assumption \ref{A.Wasserstein}  with high probability for some prescribed joint distribution for the in- and out-degrees. As pointed out earlier, there are many ways of constructing such sequences, but for the sake of completeness, we include here an algorithm based on i.i.d.~samples from the prescribed degree distribution.

\subsection{The i.i.d.~algorithm}\label{SS.IIDAlgorithm}

Let $G(x,y)$ be a joint distribution function on $\mathbb{N}^2$ such that if $(\mathscr{D}^-, \mathscr{D}^+)$ is distributed according to $G$, then $E[\mathscr{D}^- ] = E[\mathscr{D}^+]$, $E[ |\mathscr{D}^- - \mathscr{D}^+|^{1+\kappa}] < \infty$ and $E[ (\mathscr{D}^-)^{1+\kappa} (\mathscr{D}^+)^{1+\kappa}] < \infty$ for some $0 < \kappa \leq 1$. Set $\delta = c\kappa/(1+\kappa)$, for some $0 < c < 1$ if $\kappa < 1$ or choose any $0 < \delta < 1/2$ if $\kappa = 1$. 
\begin{list}{}{\leftmargin 10mm}
\item[{\sc Step 1:}] Sample $\{ (\mathscr{D}^-_i, \mathscr{D}^+_i) \}_{i=1}^n$ as i.i.d. vectors distributed according to $G(x,y)$. 
\item[{\sc Step 2:}] Define $\Delta_n = \sum_{i=1}^n (\mathscr{D}^-_i - \mathscr{D}^+_i)$. If $|\Delta_n| \leq n^{1-\delta}$, proceed to {\sc Step 3}; else, repeat {\sc Step 1}.
\item[{\sc Step 3:}] Select $|\Delta_n|$ indices from $\{1, 2, \dots, n\}$ uniformly at random (without replacement) and set
$$D_i^- = \mathscr{D}^-_i + \tau_i \qquad \text{and} \qquad D_i^+ = \mathscr{D}^+_i + \chi_i, \qquad i = 1, 2, \dots, n,$$ 
where
$$\tau_i = 1( \text{$\Delta_n \leq 0$ and $i$ was selected}) \qquad \text{and} \qquad  \chi_i = 1(\text{$\Delta_n > 0$ and $i$ was selected}).$$ 
\end{list}

This algorithm was first introduced in \cite{chen2013} for the special case where $\mathscr{D}^-$ and $\mathscr{D}^+$ are independent. There, it was shown that the degree sequences generated by the algorithm are graphical w.h.p., i.e., they can be used to construct simple graphs. Moreover, the empirical joint distribution of the degrees in a simple graph generated through either the {\em repeated} DCM or the {\em erased} DCM, converges in probability to $G(x,y)$\footnote{Note that the joint degree distribution of the nodes in the resulting graph is not $G(x,y)$, since this distribution is changed by {\sc Step 1} and {\sc Step 2} of the algorithm, as well as by the pairing process itself.}, see Theorems 2.3 and 2.4 in \cite{chen2013}\footnote{These theorems are stated for the case when $\mathscr{D}^+$ and $\mathscr{D}^-$ are independent, but a close look at the proofs shows that they remain valid when they are dependent.}.

Note that the $\{\mathscr{D}_i^- - \mathscr{D}_i^+\}$ are zero-mean random variables with $\Exp{|\mathscr{D}^- - \mathscr{D}^+|^{1 + \kappa}} < \infty$. Hence, using Burkholder's inequality (see Lemma \ref{L.Burkholder}) one obtains that
\[
	\Prob{|\Delta_n| > n^{1 - \delta}} = O\left(n^{1 - (1 + \kappa)(1 - \delta)}\right) = O\left(n^{-(1 - c)\kappa}\right),
\] 
for $\kappa < 1$, while it is $O(n^{2\delta - 1})$ when $\kappa = 1$. Hence the probability of success in {\sc Step 2} is $1 - O(n^{-a})$ for some $a > 0$.

We also point out that the i.i.d.~algorithm only requires the moment conditions $E[ |\mathscr{D}^- - \mathscr{D}^+|^{1+\kappa}] < \infty$ and $E[ (\mathscr{D}^-)^{1+\kappa} (\mathscr{D}^+)^{1+\kappa}] < \infty$, and therefore can be used to generate any light-tailed degree sequence as well as the vast majority of scale-free (heavy-tailed) degree distributions. It also includes as a special case the $d$-regular bi-degree sequence. 

The following result shows that the degree sequences generated by this algorithm satisfy Assumption~\ref{A.Wasserstein} with high probability. 

\begin{thm} \label{T.IIDAlgorithm}
Let $G^-$ denote the marginal distribution of $\mathscr{D}^-$ and $G^+$ denote that of $\mathscr{D}^+$; define $F^+(x) = E[1(\mathscr{D}^+ \leq x) \mathscr{D}^-] /\nu$ and $F^-(x) = E[1(\mathscr{D}^- \leq x) \mathscr{D}^+] /\nu$, where $\nu = E[\mathscr{D}^+] = E[\mathscr{D}^-]$. Then, for any $0 < \varepsilon < \delta$ and $E[ (\mathscr{D}^-)^{1+\kappa} \mathscr{D}^+ + \mathscr{D}^- (\mathscr{D}^+)^{1+\kappa} ]  < K_{\kappa} < \infty$, we have
$$\lim_{n \to \infty} P(\Omega_n) = 1.$$
\end{thm}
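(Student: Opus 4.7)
The plan is to separate the randomness in the algorithm into (i) the i.i.d.\ sampling of $\{(\mathscr{D}_i^-, \mathscr{D}_i^+)\}_{i=1}^n$ from $G$, and (ii) the deterministic rebalancing of at most $|\Delta_n|\le n^{1-\delta}$ coordinates in {\sc Step 3}. Let $\tilde G_n^\pm$, $\tilde F_n^\pm$ denote the marginal and size-biased empirical distributions computed from the raw samples (before {\sc Step 3}). A preparatory observation is that the hypotheses actually force finite $(1+\kappa)$-moments of the marginals: decomposing
\[
E[(\mathscr{D}^+)^{1+\kappa}] = E[(\mathscr{D}^+)^{1+\kappa}\,1(\mathscr{D}^-\ge 1)] + E[(\mathscr{D}^+)^{1+\kappa}\,1(\mathscr{D}^-=0)]
\]
and bounding the first summand by $E[(\mathscr{D}^+)^{1+\kappa}\mathscr{D}^-]\le K_\kappa$ and the second by $E[|\mathscr{D}^+-\mathscr{D}^-|^{1+\kappa}]$ (using $\mathscr{D}^+=|\mathscr{D}^+-\mathscr{D}^-|$ on $\{\mathscr{D}^-=0\}$) yields $E[(\mathscr{D}^+)^{1+\kappa}]<\infty$; symmetrically for $\mathscr{D}^-$. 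In particular $1-G^\pm(k) = O(k^{-(1+\kappa)})$, and $1-F^\pm(k) = O(k^{-(1+\kappa)})$ via $E[(\mathscr{D}^-\mathscr{D}^+)^{1+\kappa}]<\infty$.

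With these tails in hand I would show $E[d_1(\tilde G_n^\pm, G^\pm)]$ and $E[d_1(\tilde F_n^\pm, F^\pm)]$ are both $O(n^{-\kappa/(1+\kappa)})$. Using the integer-valued identity $d_1(F_1,F_2)=\sum_{k\ge 0}|F_1(k)-F_2(k)|$ together with $E[|\tilde G_n^+(k)-G^+(k)|]\le\sqrt{(1-G^+(k))/n}$, I split at the level $k^*=\Theta(n^{1/(1+\kappa)})$ where $1-G^+(k^*)\approx 1/n$: the head sum $\sum_{k\le k^*}\sqrt{(1-G^+(k))/n}$ is $O(n^{-\kappa/(1+\kappa)})$ by the tail bound, and the tail $\sum_{k>k^*}|\tilde G_n^+(k)-G^+(k)|$ is dominated in expectation by $2\,E[(\mathscr{D}^+-k^*)_+] = O((k^*)^{-\kappa})$, of the same order. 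For $\tilde F_n^+$ I would use the decomposition
\[
\bar F^+(k)-\bar{\tilde F}_n^+(k) = \frac{\nu\bar F^+(k)-B_n(k)}{\nu} + \frac{\nu-\nu_n}{\nu}\,\bar{\tilde F}_n^+(k),
\]
where $B_n(k) = n^{-1}\sum_i 1(\mathscr{D}_i^+>k)\mathscr{D}_i^-$ and $\nu_n = n^{-1}\sum_i\mathscr{D}_i^-$. Summing in $k$ decouples the problem: the Marcinkiewicz--Zygmund inequality gives $\|\nu_n-\nu\|_{L^{1+\kappa}} = O(n^{-\kappa/(1+\kappa)})$, multiplied by the finite quantity $\sum_k\bar{\tilde F}_n^+(k)=\mu_n=O(1)$; the weighted fluctuation $\sum_k|\nu\bar F^+(k)-B_n(k)|$ is handled by the same head/tail split, using $E[(\mathscr{D}^-\mathscr{D}^+)^{1+\kappa}]<\infty$ for the tail of $\bar F^+$.

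To quantify the {\sc Step 3} perturbation, note that since only $|\Delta_n|\le n^{1-\delta}$ coordinates are shifted by one, $d_1(G_n^\pm,\tilde G_n^\pm)\le |\Delta_n|/n\le n^{-\delta}$ pointwise on the success event of {\sc Step 2}. For $F_n^+$ versus $\tilde F_n^+$ (taking the case $\Delta_n\le 0$ for concreteness), a direct computation of the survival functions yields
\[
\bar F_n^+(k)-\bar{\tilde F}_n^+(k) = -\frac{|\Delta_n|}{L_n}\bar{\tilde F}_n^+(k) + \frac{1}{L_n}\sum_{i\in S}1(\mathscr{D}_i^+>k),
\]
whose integral in $k$ equals $\mu_n|\Delta_n|/L_n + L_n^{-1}\sum_{i\in S}\mathscr{D}_i^+$. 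The first summand is $O(n^{-\delta})$; for the second, since $S$ is a uniform random subset of size $|\Delta_n|$ chosen independently of $\{\mathscr{D}_i^+\}$, a Marcinkiewicz--Zygmund estimate for sampling without replacement (applied conditionally on the samples) gives $L_n^{-1}\sum_{i\in S}\mathscr{D}_i^+ = |\Delta_n|/n + o(n^{-\delta})$ with high probability. The second condition in $\Omega_n$ follows from the strong law applied to the i.i.d.\ variables $((\mathscr{D}_i^-)^\kappa+(\mathscr{D}_i^+)^\kappa)\mathscr{D}_i^+\mathscr{D}_i^-$, whose common mean is at most $E[(\mathscr{D}^-)^{1+\kappa}\mathscr{D}^+ + \mathscr{D}^-(\mathscr{D}^+)^{1+\kappa}] \le K_\kappa$ (possibly after enlarging the constant slightly to absorb fluctuations). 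Combining all the estimates via Markov's inequality and using $\varepsilon<\delta<\kappa/(1+\kappa)$ then delivers $P(\Omega_n)\to 1$.

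The main obstacle is the size-biased piece $d_1(\tilde F_n^\pm, F^\pm)$ and its perturbation analogue: the summands $1(\mathscr{D}^+>k)\mathscr{D}^-$ need not have finite variance under the stated $(1+\kappa)$-moment hypotheses, so the analysis must avoid an $L^2$ approach and instead combine Marcinkiewicz--Zygmund-type estimates of order $1+\kappa$ with the head/tail split. Concentrating the random subset sum $\sum_{i\in S}\mathscr{D}_i^+$ under only marginal $(1+\kappa)$-moments likewise requires care, most cleanly via an $L^{1+\kappa}$ estimate for sampling without replacement or a truncation at the quantile level $n^{1/(1+\kappa)}$.
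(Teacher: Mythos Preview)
Your strategy is correct and matches the paper's architecture: separate the i.i.d.\ sampling from the {\sc Step 3} perturbation, bound each piece in Kantorovich--Rubinstein distance, handle the moment condition by the strong law, and finish with Markov's inequality using $\varepsilon<\delta$. The differences are purely tactical. For $d_1(\tilde G_n^\pm,G^\pm)$ the paper invokes an existing rate (Theorem~2.2 in del Barrio--Gin\'e--Matr\'an) to get $E[d_1(\hat G_n^\pm,G^\pm)]=O(n^{-\delta})$ directly, whereas you reconstruct the same bound by the head/tail split at $k^*\asymp n^{1/(1+\kappa)}$; both yield what is needed. For the size-biased piece the paper avoids your split entirely: it writes $d_1(F_n^+,F^+)$ as $\int_0^\infty|\tfrac1n\sum_i X_i^+(x)|\,dx$ with $X_i^+(x)=1(\mathscr{D}_i^->x)\mathscr{D}_i^+/\nu-(1-F^+(x))$, applies Burkholder in $L^p$ for some $1/(1-\varepsilon)<p<1+\kappa$, and then shows $\int_0^\infty\|X_1^+(x)\|_p\,dx<\infty$ via the crude bound $\|1(\mathscr{D}^->x)\mathscr{D}^+\|_p\le(E[(\mathscr{D}^-)^{1+\kappa}(\mathscr{D}^+)^p])^{1/p}x^{-(1+\kappa)/p}$. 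This sidesteps the random normalizer issue and the log factor that your Marcinkiewicz--Zygmund head sum would pick up. For the subset-sum perturbation $\sum_{i\in S}\mathscr{D}_i^+$, the paper does not concentrate at all: it simply uses $E[\tau_i\mid\{\mathscr{D}_j\}]=|\Delta_n|/n$ and Markov, which already gives $O(n^{-\delta})$ in expectation and hence $o(n^{-\varepsilon})$ with high probability---so your MZ-for-sampling-without-replacement step, while correct, is more than is needed.
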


\subsection{The hopcount distribution}

In order to compute the hopcount distribution, we constructed 20 graphs of size $n = 10^6$, using 
the DCM for different choices of bi-degree sequence. For each of these graphs
we computed the neighborhood function, which gives for each $t > 0$ the number of pairs of nodes
at distance at least $t$. For the computation of the neighborhood function we used the HyperBall 
algorithm \cite{Boldi2013}, which is part of the Webgraph Framework \cite{Boldi2004}. We used 
HyperBall since it implements the HyperANF algorithm \cite{Boldi2011a}, which is designed to give 
a tight approximation of the neighborhood function of large graphs. From the neighborhood function 
we determined, for all finite $t$, the number of shortest paths of length $t$. In this way, we 
compute the distance between all pairs of nodes, with finite distance, in 20 independently 
generated graphs. We then took the empirical distribution of these values as a approximation of the hopcount 
distribution. 

We point out that since $H_n$ was defined as the hopcount between two randomly selected nodes, the natural unbiased estimator for the distribution of $H_n$ is the one obtained from randomly selecting pairs of nodes in independent graphs and using the corresponding empirical distribution function. However, this approach is computationally too intensive considering the amount of effort needed to generate one graph. Our approach is considerably more efficient, and although the empirical distribution function it generates does not consist of i.i.d.~samples (samples from the same graph are positively correlated), it produces results that are in close agreement with the theoretical approximation in Theorem~\ref{thm:main_result}. Additional experiments not included in this paper showed that the two approaches produce similar results, with the method used in this paper exhibiting smaller variance. 

The three examples below illustrate the accuracy of the approximation provided by Theorem~\ref{thm:main_result} for different choices of bi-degree sequences. All three examples are special cases of the i.i.d.~algorithm, and thus satisfy Assumption~\ref{A.Wasserstein}. 

\subsubsection{$d$-regular bi-degree sequence}
A d-regular bi-degree sequence satisfies $D_i^+ = d = D_i^-$ for all $1 \le i \le n$. It readily 
follows that the probability densities $g^\pm$ and $f^\pm$ have just one atom at $d$. Moreover, we have 
$\hat{Z}^\pm_k = d^k = \mu^k$ for all $k \ge 1$, hence $W^\pm = 1$ and,
\begin{equation}
	\Prob{\mathcal{H}_n \leq x} = 1 - \exp\left\{-\frac{d^{\floor{\log_d n} + \lfloor x \rfloor }}{(d - 1)n}\right\}, \qquad x \in \mathbb{R}.
	\label{E.DRegularTailProbability}
\end{equation}

\begin{figure} 
	\centering \vspace{-2cm}
	\begin{subfigure}{0.5\linewidth}
		\includegraphics[scale=0.45]{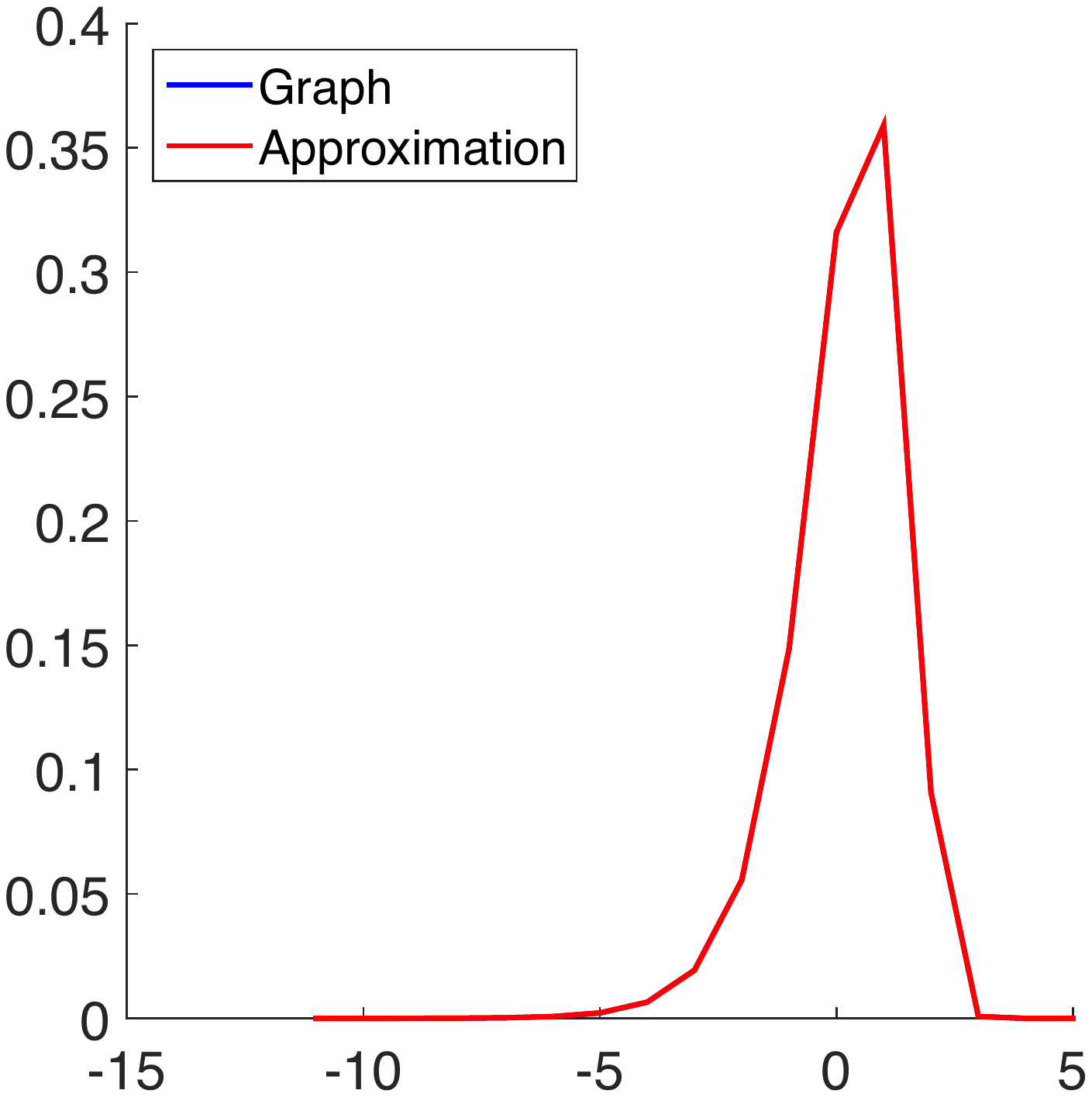}
		\caption{}
		\label{SF.DRegularPDF}
	\end{subfigure}~
	\begin{subfigure}{0.5\linewidth}
		\includegraphics[scale=0.45]{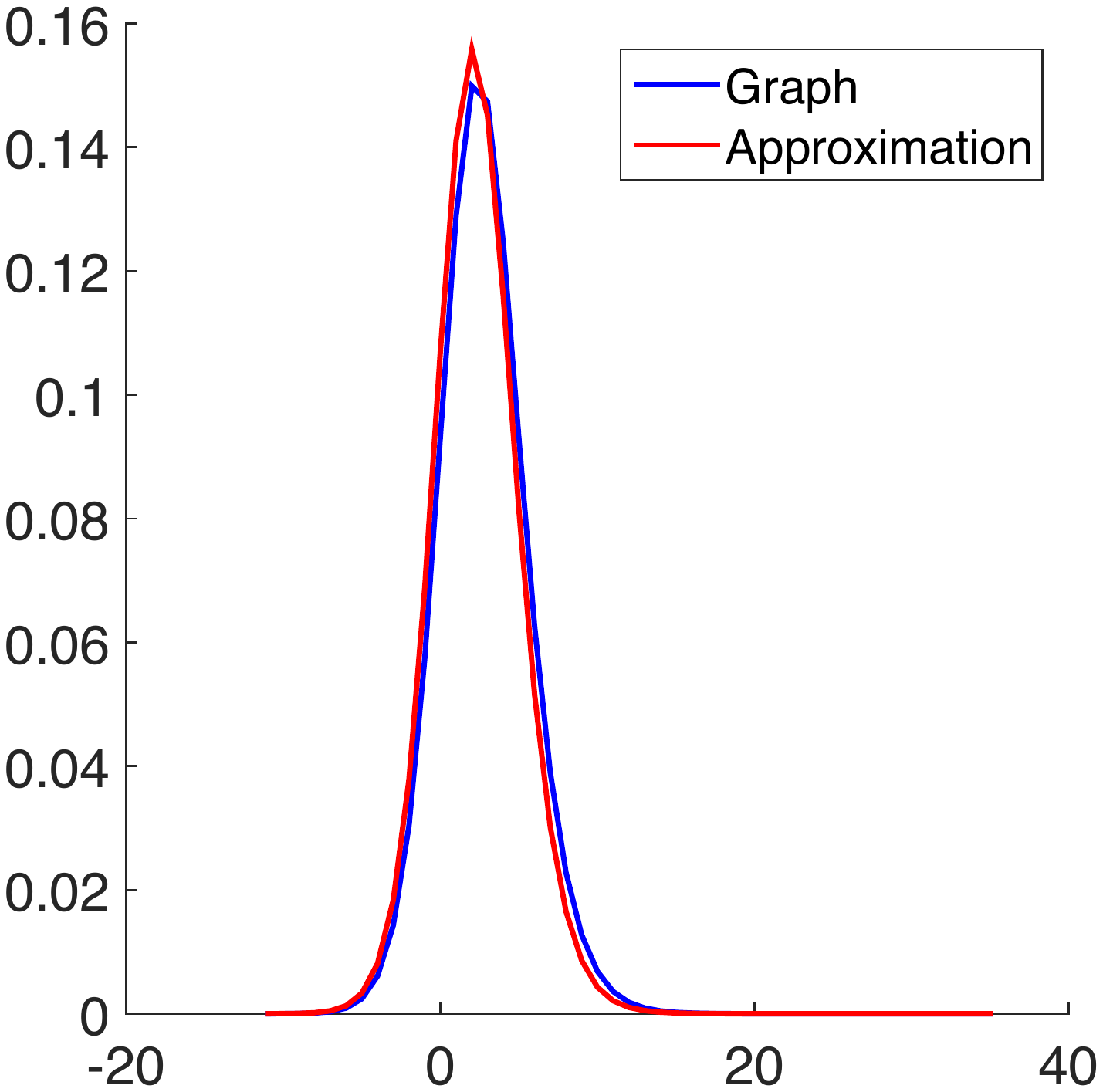}
		\caption{}
		\label{SF.IIDSequencePDF}
	\end{subfigure} \\ \vspace{-5cm}
	\begin{subfigure}{0.5 \linewidth}
		\includegraphics[scale=0.45]{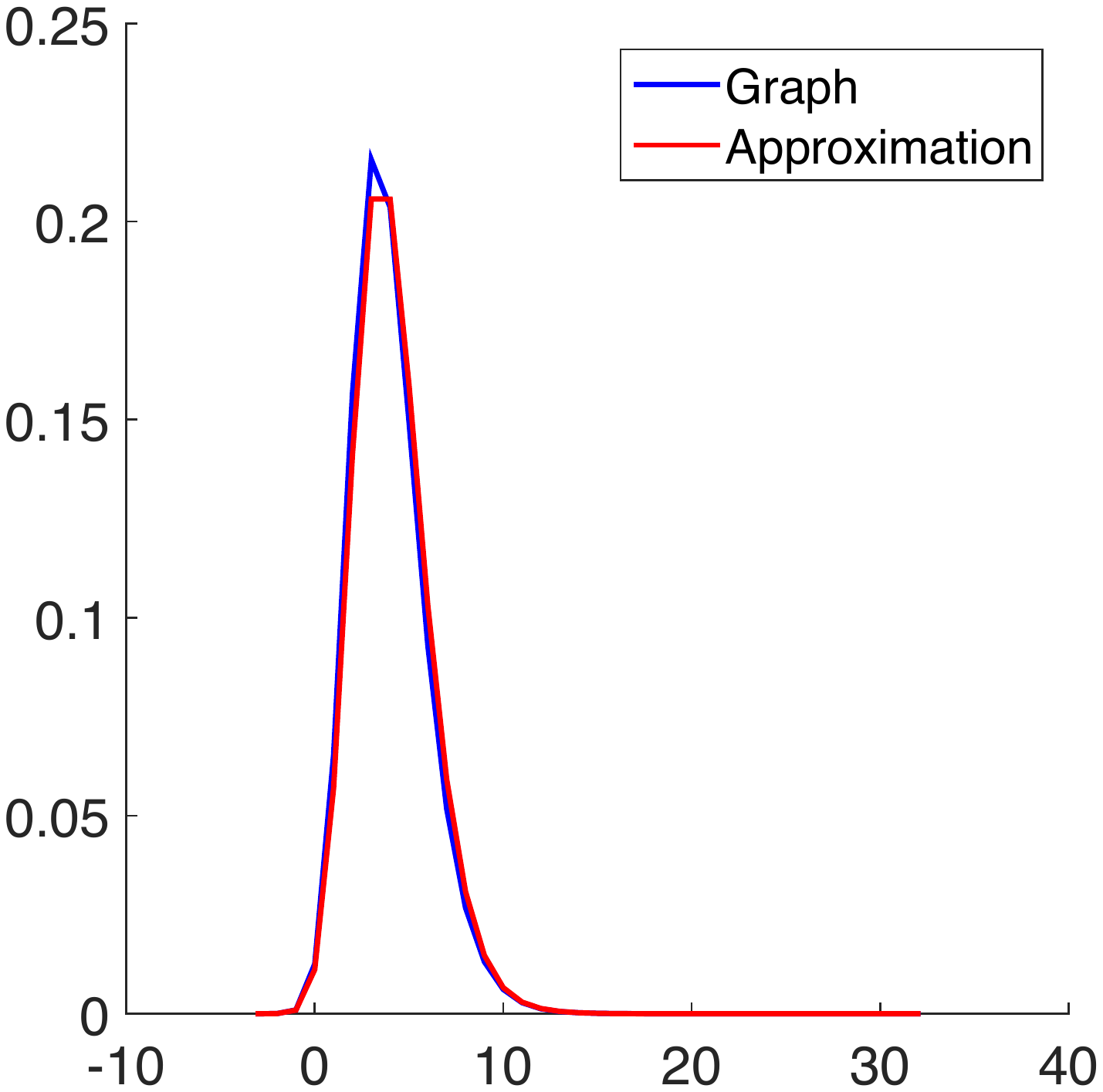}
		\caption{}
		\label{SF.DSequencePDF}
	\end{subfigure}
	\caption{Hopcount probability mass function compared to the approximation provided by Theorem~\ref{thm:main_result} for: (a) a $3$-regular bi-degree sequence; (b) a bi-degree sequence generated by the i.i.d.~algorithm with independent in- and out-degrees; and (c) a bi-degree sequence generated by the i.i.d.~algorithm with dependent in- and out-degrees. The Kolmogorov-Smirnov distance in each case is: (a) $1.3\times 10^{-4}$, (b) 0.0583, and (c) 0.0353. In all cases the graphs had $n = 10^6$ nodes.}
	\label{F.PDFcomparison}
\end{figure}

In Figure~\ref{F.PDFcomparison}(a) we plotted the probability mass functions of both the hopcount
distribution and that of its theoretical limit~\eqref{E.DRegularTailProbability}. The plots are 
indistinguishable in the figure, with a Kolmogorov-Smirnov distance of $1.3 \times 10^{-4}$. This 
shows that for non-random sequences, the approximation provided by Theorem \ref{thm:main_result} is almost exact.

\subsubsection{I.I.D.~bi-degree sequence with independent in- and out-degrees}

Following the result from Theorem~\ref{T.IIDAlgorithm}, we computed the hopcount distribution for
bi-degree sequences, generated by the i.i.d.~algorithm, using as the in- and out-degree distributions Poisson mixed with Pareto rates, and keeping the in-degree and out-degree independent of each other. More precisely, we chose $\Lambda_1$ and $\Lambda_2$ to be independent Pareto 
random variables, both with scale parameter $1$ and shape parameter $3/2$, and then set $\mathscr{D}^-$ and 
$\mathscr{D}^+$ be i.i.d.~with conditional distributions
\[
	P\left(\left. \mathscr{D}^- = k \right| \Lambda_1 = \lambda \right) = P\left( \left. \mathscr{D}^+ = k \right| \Lambda_2 = \lambda \right) =  \frac{\lambda^k e^{-\lambda}}{k!} \qquad k = 0, 1, 2, \dots.
\]
It can be verified that (see Proposition~8.4 in \cite{Grandell1997}) that
$$P(\mathscr{D}^- \geq k) \sim c_1 k^{-3/2} \qquad \text{and} \qquad P(\mathscr{D}^+ \geq k) \sim c_2 k^{-3/2},$$
as $k \to \infty$, for some constants $c_1, c_2 > 0$.

Note that the independence between $\mathscr{D}^-$ and $\mathscr{D}^+$ implies that the size-biased distributions $f^+$ and $f^-$ are equal to the unbiased ones, i.e., $f^\pm = g^\pm$. Hence, $\mu = \nu = 3$ and the branching processes $\{\hat{Z}^+_k : k \ge 1\}$ and $\{\hat{Z}^-_k : k \ge 1\}$ are not delayed.

In order to compute our theoretical approximation for the hopcount we also need to compute $\Prob{\mathcal{H}_n > k}$, which is written in terms of $W^+$ and $W^-$. Since $W^+$ and $W^-$ are not known in general, we estimate them numerically using the approach from \cite{Chen2014}, which describes a bootstrap algorithm for simulating the endogenous solutions of branching linear recursions. For 
this we first observe that $W^+$ and $W^-$ satisfy the following stochastic fixed-point equations: 
\[
	W^- \stackrel{d}{=} \sum_{i = 1}^{\mathscr{D}^-} \frac{W_i^-}{\mu} \qquad \text{and} \qquad
	W^+ \stackrel{d}{=} \sum_{i = 1}^{\mathscr{D}^+} \frac{W_i^+}{\mu},
\]
where $W_i^\pm$ are i.i.d.~copies of $W^\pm$, independent of $\mathscr{D}^-$ and $\mathscr{D}^+$. Using the algorithm in \cite{Chen2014} for 30 generations of the trees with a sample pool of size $10^6$, we obtained $10^6$ observations for each of $W^+$ and $W^-$, with the sample for $W^+$ independent of that for $W^-$. We then used these samples to estimate
\[
	\CExp{\exp\left\{-\frac{\nu}{\mu-1} \cdot \frac{\mu^{\lfloor \log_\mu n \rfloor + k}}{n}  W^+ W^- \right\}}{W^+W^- > 0}
	\quad \text{for } k = 0, 1, \dots.
\]

The results for the hopcount distribution are shown in Figure~\ref{F.PDFcomparison}(b). The 
Kolmogorov-Smirnov distance in this case is $0.0583$.

\subsubsection{I.I.D.~bi-degree sequence with dependent in- and out-degrees}

Our third and last example is for a bi-degree sequence obtained using the i.i.d.~algorithm but for the case where $\mathscr{D}^-$ and $\mathscr{D}^+$ are dependent. We take the extreme case where $D_i^-
= D_i^+$ for all $1 \le i \le n$. To obtain such a sequence we generate the $D_i^-$ by sampling 
from a Zipf distribution with corpus size $10^3$ and exponent $7/2$ and set $D_i^+ = D_i^-$, that is
\[
	P(\mathscr{D}^+ = t) = t^{-7/2}/\zeta(7/2) \quad \text{for all } t = 1, 2, \dots,
\]
where $\zeta(s)$ is the Riemann zeta function. Observe
that since the exponent is larger than $3$, the distribution has finite $2 + \varepsilon$ moment, 
for $0 < \varepsilon < 1/2$. Therefore, it follows from Theorem~\ref{T.IIDAlgorithm} that this bi-degree sequence satisfies Assumption \ref{A.Wasserstein} with high probability. We used a Zipf distribution here since then, the 
sized-bias distribution will again be Zipf, with exponent $5/2$.

The $W^+$ and $W^-$ were again simulated using the algorithm in \cite{Chen2014} with the same number of generations and the same pool size as for the independent case above, but with the appropriate sized-biased distribution and the corresponding delay for the first generation of the tree. 

The results for the hopcount are shown in Figure \ref{F.PDFcomparison}(c), and the Kolmogorov-Smirnov distance is $0.0353$. 




\section{Coupling with a branching process}
\label{S.Coupling}

Given a directed graph $\mathcal{G}_n$ of size $n$ the shortest directed path from node $v_1$ to node $v_2$ can be computed by starting two breadth-first exploration processes, one to uncover the out-component of  $v_1$, call this $B^+(v_1)$, and another one to uncover the in-component of $v_2$, call it $B^-(v_2)$. If $B^+(v_1) \cap B^-(v_2) \neq \varnothing$, then there exists a finite $(v_1,v_2)$-path, whereas if this intersection is always empty, there is none. We point out that since shortest paths do not contain cycles, the exploration of the components, either inbound or outbound, requires only that we keep track of edges with nodes not previously uncovered.

The first step in proving Theorem \ref{thm:main_result} is to couple the breadth-first exploration processes described above, starting from uniformly chosen nodes in $\mathcal{G}_n$, with two independent branching process. This is a well known approach for analyzing the properties of random graphs, also referred to as a branching process argument.

The main result of this section is Theorem~\ref{thm:main_coupling}, along with its more immediately 
useful corollary (Corollary~\ref{C.MainCoupling}), which is the key ingredient in
the proof of Theorem \ref{thm:main_result}. 

\subsection{Exploration of new stubs}

Similarly to the construction in \cite{Hofstad2005}, we start by designating all the $n$ nodes as {\em inactive}, meaning they have not been uncovered yet, and setting $Z_0^\pm = 1$ (note that in 
\cite{Hofstad2005} it is the stubs themselves that are labeled, not the nodes).  Let $\emptyset$ denote the fictional first stub, and set $A_0^\pm =\{\emptyset\}$; call this initialization step 0. The 
process $\{Z_k^\pm : k \geq 0\}$ will keep track of the number of outbound (inbound) stubs discovered during 
the $k$-th step of the exploration process, as we will now describe. The superscript $\pm$ refers 
to whether the exploration follows the outbound stubs (for which we use the superscript $+$), or the inbound stubs (for which we use the superscript $-$).

In step 1 we randomly select a node and set $Z_1^\pm = j$ if it has $j$ outbound (inbound) stubs; 
we set its state to {\em active}, meaning it has already been uncovered. To identify each of
the outbound (inbound) stubs we index them 1 through $j$ and let $A_1^\pm = \{1, \dots, j\}$
be the set of the indices of the newly discovered stubs. For the second step of the exploration 
process we will need to traverse all $Z_1^\pm$ outbound (inbound) stubs, which we do 
sequentially and in lexicographic order with respect to their indexes. Here, we say that we have traversed an outbound (inbound) stub if we have identified the node it leads to and discovered how many outbound (inbound) stubs this new node has. If the stub is pointing to an {\em inactive} node, we label the node as {\em active}, index all its outbound (inbound) stubs with a name of the form $(i,j)$, $j \geq 1$, and then proceed to explore the next outbound (inbound) stub. If the stub is pointing to an {\em active} node no new outbound (inbound) stubs are discovered. Once we
are done exploring all $Z_1^\pm$ outbound (inbound) stubs we set $Z_2^\pm$ to be the number of newly discovered  outbound (inbound) stubs and let $A_2^\pm$ denote the set of their indices.

In general, in step $k$ we will traverse all $Z_{k-1}^\pm$ outbound (inbound) stubs, in lexicographic 
order, discovering new nodes and hence new outbound (inbound) stubs. If outbound (inbound) stub 
${\bf i} = (i_1, \dots, i_{k-1})$ is paired with an inbound (outbound) stub belonging to an {\em inactive} node,
then the outbound (inbound) stubs of the newly discovered node receive an index of the form $(i_1, \dots, i_{k-1}, 
i_{k})$, $i_{k} \geq 1$; if outbound (inbound) stub {\bf i} is paired with an inbound (outbound) stub belonging to an {\em active} node, then no new outbound (inbound) stubs are discovered. Once we have traversed all $Z_{k-1}^\pm$ outbound (inbound) stubs we set $Z_{k}^\pm$ to be the number of new outbound (inbound) stubs discovered in 
step $k$. The process continues until all $L_n$ outbound (inbound) stubs have been traversed. 

Note that the process $\{Z_k^\pm: k \geq 0\}$ defines a labeled tree, where the ``individuals" are 
the outbound (inbound) stubs discovered in step $k$ ($Z^\pm_0 = 1$), not the nodes of the graph 
themselves. In addition to keeping track of $Z_k^\pm$, we will also keep track of ``time" in the 
exploration process, where time $t$ means we have traversed $t$ outbound (inbound) stubs.

\subsection{Construction of the coupling} \label{SS.Coupling}

To study the distance between two randomly chosen nodes we will couple the exploration of the graph 
described above with a branching process. To do this we first note that the exploration process 
 is equivalent to assigning to outbound stub ${\bf i} \neq \emptyset$ a number of offspring $\chi_{\bf i}^+$ 
chosen according to the (random) probability mass function
\begin{equation}
	h^+_{\bf i}(t) = 
	\begin{cases}
		\frac{1}{L_n - T^+_{\bf i}} \sum_{r=1}^n 1(D_r^+ = t) D_r^- \mathcal{I}_r(T_{\bf i}^+) , 
		& t = 1, 2, \dots, \\
		\frac{1}{L_n - T^+_{\bf i}}  \left\{ \sum_{r=1}^n 1(D_r^+ = 0) D_r^- \mathcal{I}_r(T_{\bf i}^+) 
		+ V_{\bf i}^- \right\}, & t = 0,
	\end{cases}
\end{equation}
where $T_{\bf i}^+$ is the number of outbound stubs that have been traversed up until the moment outbound stub {\bf i} is about to be traversed, $\mathcal{I}_r(t) = 1(\text{node $r$ is {\em inactive} after 
having traversed $t$ stubs})$, and 
\[
	V_{\bf i}^- = L_n - \sum_{r=1}^n D_r^- \mathcal{I}_r(T^+_{\bf i}) - T^+_{\bf i}
\]
is the number of unexplored inbound stubs belonging to active nodes at time $T_{\bf i}^+$. Note that $T_{\bf i}^+$ is also the number of inbound stubs that already belong to edges in the graph up until the moment outbound stub ${\bf i}$ is about to be explored. Symmetrically, we assign to inbound stub ${\bf i}$ a number of offspring 
$\chi_{\bf i}^-$ distributed according to
\begin{equation}
h^-_{\bf i}(t) = \begin{cases}
	\frac{1}{L_n - T^-_{\bf i}} \sum_{r=1}^n 1(D_r^- = t) D_r^+ \mathcal{I}_r(T_{\bf i}^-) , 
	& t = 1, 2, \dots, \\
	\frac{1}{L_n - T^-_{\bf i}} \left\{\sum_{r=1}^n 1(D_r^- = 0) D_r^+ \mathcal{I}_r(T_{\bf i}^-) 
	+ V_{\bf i}^+ \right\}, & t = 0,
\end{cases}
\end{equation}
with $T_{\bf i}^-$ the number of inbound stubs that have been traversed up until the moment inbound stub {\bf i} is about to be explored, and
\[
	V_{\bf i}^+ = L_n - \sum_{r=1}^n D_r^+ \mathcal{I}_r(T^-_{\bf i}) - T^-_{\bf i}
\]
is the number of unexplored outbound stubs belonging to {\em active} nodes at time $T_{\bf i}^-$. As before, we have that $T_{\bf i}^-$ is also the number of outbound stubs that already belong to edges in the graph up until the moment inbound stub ${\bf i}$ is about to be explored. 

Note that the number of outbound (inbound) stubs of the first node, i.e., $Z_1^\pm$, is 
distributed according to $g_n^\pm$. 

The key idea behind the coupling we will construct is that sampling from $h_{\bf i}^\pm$ and sampling from $f_n^\pm$ should be roughly equivalent as long as $T^\pm_{\bf i}$ is not too large. In turn, for large $n$, Assumption~\ref{A.Wasserstein} implies that $f_n^\pm$ is very close to $f^\pm$. It follows that the 
process $\{ Z_k^\pm: k \geq 0\}$ should be very close to a suitably constructed (delayed) branching
process $\{ \hat Z_k^\pm: k \geq 0\}$ having offspring distributions $(g^\pm, f^\pm)$, where $g^\pm$ is the distribution of $\hat Z_1^\pm$ and all other nodes have offspring according to $f^\pm$.

To construct the coupling define $U = \bigcup_{k=0}^\infty \mathbb{N}_+^k$, with the convention 
that $\mathbb{N}_+^0 = \{ \emptyset\}$, and let $\{U_{\bf i}\}_{{\bf i} \in U}$ be a sequence of 
i.i.d.~Uniform$(0,1)$ random variables. For any non-decreasing function $F$ define $F^{-1}(u) = 
\inf\{ x \in \mathbb{R}: F(x) \geq u\}$ to be its pseudo-inverse. Now set the number of outbound 
(inbound) stubs of {\bf i} in the graph to be 
\[
	\chi_{\bf i}^\pm = (H_{\bf i}^\pm)^{-1}(U_{\bf i}), \quad{\bf i} \neq \emptyset, 
	\qquad \chi_\emptyset^\pm = (G_n^\pm)^{-1}(U_\emptyset),
\]
where $H_{\bf i}^\pm$ is the cumulative distribution function of $h_{\bf i}^\pm$, and the number 
of offspring of individual {\bf i} in the outbound (inbound) branching process to be
\[
	\hat \chi_{\bf i}^\pm = (F^\pm)^{-1}(U_{\bf i}), \quad {\bf i} \neq \emptyset, 
	\qquad \hat \chi_\emptyset^\pm = (G^\pm)^{-1}(U_\emptyset).
\]
In addition we let $\hat{A}^\pm_r$ denote the set of individuals in the tree, corresponding to the
process $\{\hat{Z}_k^\pm : k \ge 0\}$, at distance $r$ from the root.

Note that $\chi_{\bf i}$ and $\hat \chi_{\bf i}$ are now coupled through the same $U_{\bf i}$, and in view of the remarks following Definition~\ref{d.wasserstein}, this coupling minimizes the Kantorovich-Rubinstein distance between the distributions $h_{\bf i}^\pm$ and $f^\pm$. Moreover, although the $\chi_{\bf i}^\pm$ are only defined for stubs {\bf i} that have been created 
through the pairing process, the $\hat \chi_{\bf i}^\pm$ are well defined regardless of whether 
{\bf i} belongs to the tree or not. Furthermore, the sequence $\{ U_{\bf i}\}_{{\bf i} \in U}$ defines
the entire branching process $\{ \hat Z_k^\pm: k \geq 0\}$, even after the graph has been fully explored. 

The last thing we need to take care of is the observation that knowing $\chi_{\bf i}^\pm$ in the 
exploration of the graph does not necessarily tell us the identity of the node that stub {\bf i} leads to, since there may be more than one node with $\chi_{\bf i}^\pm$ outbound (inbound) stubs, which is problematic if they do not also have the same number of inbound (outbound) stubs. The construction of the coupling requires that we keep track of both the inbound and outbound stubs discovered when a node first becomes {\em active}, since this information allows us to estimate the remaining number of unexplored stubs. To fix this problem, given $\chi_{\bf i}^\pm = t > 0$, pair outbound (inbound) stub ${\bf i}$ with an inbound (outbound) stub 
randomly chosen from the set of unpaired inbound (outbound) stubs belonging to {\em inactive} nodes 
and having exactly $t$ outbound (inbound) stubs; if $\chi_{\bf i}^\pm = 0$ sample the inbound 
(outbound) stub from the set of unpaired inbound (outbound) stubs belonging to either {\em inactive} or {\em active} nodes having no outbound (inbound) stubs.

Summarizing the notation, we have:
\begin{itemize} \itemsep 0pt
\item $A_r^+$ ($A_r^-$): set of outbound (inbound) stubs created during the $r$th step of the 
exploration process on the graph. 

\item $\hat A_r^+$ ($\hat A_r^-$): set of individuals in the outbound (inbound) tree at distance 
$r$ of the root.

\item $Z_r^+$ ($Z_r^-$): number of outbound (inbound) stubs created during the $r$th step of 
the exploration process.

\item $\hat Z_r^+$ ($\hat Z_r^-$): number of individuals in the $r$th generation of the outbound
(inbound) tree.

\end{itemize}
The main observation upon which the analysis of the coupling is based is that if $|A|$ denotes the cardinality of set $A$, then
\[
	Z^\pm_{k} = \left| A^\pm_{k} \right|  = \left| A^\pm_{k} \cap \hat A^\pm_k \right| 
	+ \left| A^\pm_{k} \cap (\hat A^\pm_k)^c \right|
\]
which implies that
\begin{equation} \label{eq:MainInequality}
	\hat Z^\pm_k - \left| \hat A^\pm_{k} \cap (A^\pm_k)^c \right|  \leq Z^\pm_{k} 
	\leq \hat Z^\pm_k + \left| A^\pm_{k} \cap (\hat A^\pm_k)^c \right| .
\end{equation}

\subsection{Coupling results}

We now present our main result on the coupling between the exploration process $\{Z_k^\pm: k \geq 1\}$ and the delayed branching process $\{\hat Z_k^\pm: k \geq 1\}$ described above. As mentioned earlier, the value of this new coupling is that it holds for a number of steps in the graph exploration process that is equivalent to having discovered $n^{1-\delta}$ number of nodes for arbitrarily small $0 < \delta < 1$; moreover, the coupled branching process is independent of the bi-degree sequence and of the number of nodes.  Throughout the remainder of the paper, $\varepsilon > 0$ and $0 < \kappa
\leq 1$ are those from Assumption~\ref{A.Wasserstein}. 

\begin{thm} \label{thm:main_coupling}
	Suppose that $({\bf D}_n^-, {\bf D}_n^+)$ satisfies Assumption \ref{A.Wasserstein}. Then, for any $0 < 
	\delta < 1$, any $0 < \gamma < \min\{ \delta\kappa, \varepsilon\}$,  there exist finite constants $K, a > 0$ such that
	for all $1 \leq k \le (1-\delta) \log_\mu  n$, 
	\[
		\mathbb{P}_n\left( \bigcap_{m=1}^{k} \left\{ \left| \hat A^\pm_{m} \cap (A^\pm_m)^c \right| 
		\leq \hat Z_m^\pm  n^{-\gamma}, \,   \left| A^\pm_{m} \cap (\hat A^\pm_m)^c \right| \leq  \hat 
		Z_m^\pm  n^{-\gamma}  \right\} \right) \geq 1 -  K n^{-a}.
	\]
\end{thm}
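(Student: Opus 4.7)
The plan is to establish the bound inductively in the generation index $m$, working conditionally on $\mathscr{F}_n$ and on the event $\Omega_n$ where Assumption~\ref{A.Wasserstein} holds. Since the $+$ and $-$ explorations are structurally identical, I focus on the $+$ case. The starting point is the coupling construction of Section~\ref{SS.Coupling}: because $\chi_{\bf i}^+$ and $\hat\chi_{\bf i}^+$ are built from the common uniform $U_{\bf i}$ via pseudo-inverses of $H_{\bf i}^+$ and $F^+$, this is the Kantorovich--Rubinstein-optimal coupling. Letting $\mathcal{G}_{\bf i}$ denote the sigma-algebra generated by the exploration up to the moment ${\bf i}$ is about to be traversed, it follows on $\Omega_n$ that
\[
\mathbb{E}_n\!\left[\,|\chi_{\bf i}^+ - \hat\chi_{\bf i}^+| \mid \mathcal{G}_{\bf i}\right] = d_1(H_{\bf i}^+, F^+) \le d_1(H_{\bf i}^+, F_n^+) + n^{-\varepsilon},
\]
with the analogous inequality for the root stub via $d_1(G_n^+, G^+) \le n^{-\varepsilon}$.

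The central technical task is an a priori bound on $d_1(H_{\bf i}^+, F_n^+)$ in terms of $T_{\bf i}^+$. Using $d_1 = \int_0^\infty |H_{\bf i}^+(x) - F_n^+(x)|\,dx$ and expanding both CDFs as sums over nodes, the integrand decomposes into (i) a denominator-correction term of order $T_{\bf i}^+\mu_n/L_n$; (ii) the removal of the contribution of already-active nodes, namely $L_n^{-1}\sum_{r \text{ active}} D_r^- D_r^+$; and (iii) a unit jump at zero of size $V_{\bf i}^-/(L_n - T_{\bf i}^+)$. Term (i) is immediately bounded by $C T_{\bf i}^+/n$. For (ii), splitting the sum over active nodes at a threshold $M$ into a low-degree part, bounded deterministically by $M^2 T_{\bf i}^+$, and a high-degree part, bounded via Assumption~\ref{A.Wasserstein}(2) by $K_\kappa n/M^\kappa$, and then optimizing $M$, yields a polynomial estimate of the form $C n^{-c\kappa}(T_{\bf i}^+)^{c\kappa}$ for a suitable $c \in (0,1)$; term (iii) admits an analogous treatment. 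On the event $T_{\bf i}^+ \le n^{1-\delta}$ this gives $d_1(H_{\bf i}^+, F_n^+) \le C n^{-\gamma'}$ uniformly in ${\bf i}$, for any $\gamma'$ strictly smaller than $\min\{\delta\kappa,\varepsilon\}$.

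The induction on $m$ rests on the labelled-tree decomposition
\[
|A_m^+ \setminus \hat A_m^+| + |\hat A_m^+ \setminus A_m^+| \le \sum_{{\bf i}\in A_{m-1}^+\cap \hat A_{m-1}^+} |\chi_{\bf i}^+ - \hat\chi_{\bf i}^+| + \sum_{{\bf i}\in A_{m-1}^+\setminus \hat A_{m-1}^+} \chi_{\bf i}^+ + \sum_{{\bf i}\in \hat A_{m-1}^+\setminus A_{m-1}^+} \hat\chi_{\bf i}^+,
\]
which follows directly from the fact that offspring labels $(i_1,\dots,i_{m-1},j)$ are inherited identically by both trees. Taking conditional expectations, applying the per-stub estimate, and noting that the mean offspring count of a generic ${\bf i}$ under $h_{\bf i}^+$ or $f^+$ is $\mu + o(1)$, yields the schematic recursion
\[
\mathbb{E}_n\!\left[|A_m^+ \setminus \hat A_m^+| + |\hat A_m^+ \setminus A_m^+|\right] \le C \hat Z_{m-1}^+ n^{-\gamma'} + (\mu + o(1))\,\mathbb{E}_n\!\left[|A_{m-1}^+ \setminus \hat A_{m-1}^+| + |\hat A_{m-1}^+ \setminus A_{m-1}^+|\right].
\]
Iterating this and using $\hat Z_m^+ \sim \nu \mu^{m-1}$ produces a bound of order $m \hat Z_m^+ n^{-\gamma'} = O(\hat Z_m^+ n^{-\gamma})$ for any $\gamma < \gamma'$, provided $m \le (1-\delta)\log_\mu n$; the input requirement $T_{\bf i}^+ \le 2\sum_{j<m} \hat Z_j^+ \le C\mu^{m-1} \le C n^{1-\delta}$ is maintained on the inductive event.

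A Markov inequality applied level by level, together with a union bound over the $O(\log n)$ values of $m$, upgrades this $L^1$ estimate to the claimed tail bound $1 - Kn^{-a}$. The main obstacle, and the sole place where Assumption~\ref{A.Wasserstein}(2) is genuinely required, is step (ii) above: the sum $\sum_{r \text{ active}} D_r^- D_r^+$ cannot be controlled by the active count alone, and the $(1+\kappa)$-moment bound is exactly what enables the low-degree/high-degree truncation to produce a polynomial rate uniformly over the random set of active nodes encountered during the exploration.
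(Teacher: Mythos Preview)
Your overall architecture---per-stub Kantorovich--Rubinstein estimate, labelled-tree decomposition of the symmetric difference, recursion in $m$, then Markov plus union bound---matches the paper's, and your threshold-splitting argument for term~(ii) is a legitimate pathwise alternative to the paper's expectation-based estimate in Lemma~\ref{L.BoundE}. (It yields the slightly weaker exponent $\delta\kappa/(2+\kappa)$ rather than $\delta\kappa$, so as written it does not give the full range of $\gamma$ in the statement, but that is secondary.)

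The genuine gap is in the last paragraph. You write ``$\hat Z_m^+ \sim \nu\mu^{m-1}$'' and ``$T_{\bf i}^+ \le 2\sum_{j<m}\hat Z_j^+ \le C\mu^{m-1}$'', and then apply Markov's inequality to the event $\{|A_m^+\triangle \hat A_m^+| > \hat Z_m^+ n^{-\gamma}\}$. But $\hat Z_m^+$ is random: it can be zero (extinction) or much smaller than $\mu^m$, in which case the threshold $\hat Z_m^+ n^{-\gamma}$ is useless for Markov, and it can be much larger than $\mu^m$, in which case your deterministic bound on $T_{\bf i}^+$ fails and the per-stub estimate is no longer available. The paper handles both sides of this with substantial extra machinery: an upper-fluctuation event $E_k=\{\max_{r\le k}\hat Z_r^\pm/\mu^r\le n^\eta\}$ controlled by Doob's inequality (this is what makes $T_{\bf i}^+\le C\mu^k n^\eta$ legitimate), a separate analysis on $\{W^\pm=0\}$ showing the coupling is \emph{exact} until the branching process dies (Proposition~\ref{P.OnExtinction}), and on $\{W^\pm>0\}$ a lower-growth event $J_{s_n}=\{\inf_{r\ge s_n}\hat Z_r^\pm/u^r\ge 1\}$ for some $1<u<\mu$, controlled by Lemma~\ref{L.Martingale}, which replaces the random threshold $\hat Z_m^+ n^{-\gamma}$ by the deterministic $u^m n^{-\gamma}$ in the Markov step. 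There is also a two-stage split: for early generations $m<s_n$ one bounds the probability that the coupling is not exact (event $B_m$), and only for $m\ge s_n$ does one switch to the weaker event $C_m$. Without these ingredients your Markov step does not go through, and this is where most of the work in the paper's proof actually lies.
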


As an immediate corollary, relation \eqref{eq:MainInequality} gives:

\begin{cor} \label{C.MainCoupling}
	Suppose that $({\bf D}_n^-, {\bf D}_n^+)$ satisfies Assumption \ref{A.Wasserstein}. Then, for any 
	$0 < \delta < 1$, any $0 < \gamma < \min\{ \delta\kappa, \varepsilon\}$, there exist finite constants $K, a > 0$ such that for all $1 \leq k \le (1-\delta) \log_\mu n$, 
	\[
		\mathbb{P}_n \left( \bigcap_{m=1}^{k} \left\{ \hat Z_m^\pm \left(1 -  n^{-\gamma} \right) 
		\leq Z_m^\pm \leq \hat Z_m^\pm \left(1 + n^{-\gamma} \right) \right\} \right) 
		\geq 1 - K n^{-\alpha}.
	\]
\end{cor}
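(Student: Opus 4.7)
The plan is to derive Corollary \ref{C.MainCoupling} directly from Theorem \ref{thm:main_coupling} by combining the two symmetric-difference bounds in the theorem with the sandwich inequality \eqref{eq:MainInequality}. The author explicitly signals this as an ``immediate'' consequence, and indeed no further probabilistic argument is needed; the only work is one algebraic substitution, carried out uniformly over $m = 1,\dots,k$.

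Concretely, I would fix $0 < \delta < 1$ and $0 < \gamma < \min\{\delta\kappa,\varepsilon\}$, and let $\mathcal{E}_k$ denote the event whose $\mathbb{P}_n$-probability is bounded from below by $1 - Kn^{-a}$ in Theorem \ref{thm:main_coupling}, namely
$$\mathcal{E}_k \;=\; \bigcap_{m=1}^{k}\left\{\, \bigl| \hat A_m^\pm \cap (A_m^\pm)^c \bigr| \leq \hat Z_m^\pm\, n^{-\gamma},\ \  \bigl| A_m^\pm \cap (\hat A_m^\pm)^c \bigr| \leq \hat Z_m^\pm\, n^{-\gamma}\, \right\}.$$
On $\mathcal{E}_k$, both symmetric-difference bounds hold simultaneously for every $m \in \{1,\dots,k\}$, so plugging them into \eqref{eq:MainInequality} yields, for each such $m$,
$$\hat Z_m^\pm\bigl(1 - n^{-\gamma}\bigr) \;\leq\; Z_m^\pm \;\leq\; \hat Z_m^\pm\bigl(1 + n^{-\gamma}\bigr).$$
Therefore $\mathcal{E}_k$ is contained in the event whose probability the corollary claims to bound, and monotonicity of probability combined with Theorem \ref{thm:main_coupling} gives the stated lower bound of $1 - Kn^{-a}$. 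The exponent $\alpha$ appearing in the corollary's statement is simply a renaming of the constant $a$ from the theorem (apparently a typographical inconsistency), and the constant $K$ can be carried over unchanged.

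The only thing worth double-checking is that the ranges of the parameters $\delta$, $\gamma$, and the upper limit $(1-\delta)\log_\mu n$ match between the two statements, which they do verbatim, so no rescaling is required. The real work, of course, lies in proving Theorem \ref{thm:main_coupling} itself, where one must control the Kantorovich-Rubinstein discrepancy between the ``on the fly'' distributions $h_{\bf i}^\pm$ and the limiting size-biased distribution $f^\pm$ as $T_{\bf i}^\pm$ grows, and propagate a union-type bound over up to $n^{1-\delta}$ generations of the exploration. Once that theorem is in hand, Corollary \ref{C.MainCoupling} is essentially free.
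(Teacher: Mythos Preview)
Your proposal is correct and matches the paper's approach exactly: the paper presents this corollary as an immediate consequence of Theorem~\ref{thm:main_coupling} via relation~\eqref{eq:MainInequality}, with no additional argument given, which is precisely what you do. Your observation about the $\alpha$ versus $a$ typographical inconsistency is also accurate.
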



\section{Distances in the directed configuration model}\label{S.Distances}

Having described the graph exploration process in the previous section, we are now ready to derive an expression for the hopcount between two randomly chosen nodes in a directed graph of size $n$ generated via the DCM. The main result of this section is Theorem~\ref{T.MainTheoremBody}, which expresses the tail 
distribution of the hopcount in terms of limiting random variables related to the branching 
processes $\{ {\hat Z}^+_k: k \geq 1\}$ and $\{ {\hat Z}^-_k: k \geq 1\}$ introduced in the previous section. Although we will 
include some preliminary calculations here, we refer the reader to Section
\ref{SS.ProofsDistances} for all other proofs.  

As described in Section~\ref{S.Coupling}, we will compute the hopcount of a graph by 
selecting two nodes at random, say $1$ and $2$, and then start two independent breadth-first exploration 
processes. One will follow the outbound edges of node $1$ while the other will use the inbound 
edges of node $2$. At each step we explore one generation of the out-component of node 1 and the 
corresponding generation of the in-component of node 2, starting with node 1. 

In terms of the two nodes, $\{ Z^+_k: k \geq 1\}$ will denote the number of outbound stubs discovered during the $k$th step of exploration of the out-component of node 1, while $\{ Z^-_k: k \geq 1\}$ will denote the number of inbound stubs discovered during the $k$th step of the exploration of the in-component of node 2. An expression for the  distribution of the hopcount is then obtained by computing the probability that there are no nodes in common given the current number of stubs explored so far in each of the two processes. We point out that the hopcount may be in fact infinite, which happens when node 2 is not in the out-component of node 1.


The first step in the analysis is a recursive relation for $\Probn{H_n > k}$. For this we denote by 
$\mathscr{F}^{l, m} = \sigma(\Zin_i, \Zout_j : 0 \le i \le l, 0 \le j \le m)$ the sigma algebra 
generated by the $\Zin_i$ and $\Zout_j$ of the first $l$ and $m$ generations, respectively. The 
next result follows from the analysis done in~\cite{Hofstad2005} Lemma 4.1, which can be adapted to
our case in a straight forward fashion.
\begin{equation}
	\Probn{H_n > k} = \Expn{\prod_{i = 2}^{k + 1} \CProbn{H_n > i - 1}{H_n > i - 2, \mathscr{F}^{
	\ceil{i/2}, \floor{i/2}}}} \quad \text{for all } k \ge 1.
\label{eq:hopcount_conditional}
\end{equation}
The ceiling and floor functions are here because we iteratively advance the exploration process alternating between nodes $1$ and $2$, starting with $1$.

Let $p(A, B, L)$ denote the probability that none of the outbound stubs from a set of size $A$ 
connect to one of the inbound stubs from a set of size $B$, given that there are $L$ 
outbound/inbound stubs in total. Since we can only select $A$ inbound stubs outside of the set
of size $B$ if $A + B \le L$ and the probability of selecting the first such stub is $1 - B/L$, we 
get
$$p(A, B, L) = 1(A + B \le L) \left(1 - \frac{B}{L}\right)p(A - 1, B, L - 1).$$
Continuing the recursion yields,
\begin{equation}
	p(A, B, L) = 1\left(A + B\le L\right) \prod_{s = 0}^{A - 1}\left(1 - \frac{B}{
	L - s}\right).
\label{eq:disconnected_prob}
\end{equation}

Next, observe that $H_n > 1$ holds if and only if none of the $\Zout_1$ outgoing edges points 
towards node $2$. From the definition of the model this occurs if and only if none of the $\Zout_1$ 
outbound stubs have been paired with one of the $\Zin_1$ inbound stubs. Hence,
\[
	\CProbn{H_n > 1}{\mathscr{F}^{1, 1}} = p(\Zout_1, \Zin_1, L_n) 
	= \Ind\left( \Zout_1 + \Zin_1 \le L_n\right) \prod_{s = 0}^{\Zout_1 - 1} 
	\left(1 - \frac{\Zin_1}{L_n - s}\right).
\]
Similarly, we have
\[
	\CProbn{H_n > 2}{H_n > 1, \mathscr{F}^{2,1}} 
	= \Ind\left(\Zout_2 + \Zin_1 \le L_n - \Zout_1\right) \prod_{s = 0}^{\Zout_2 - 1} 
	\left(1 - \frac{\Zin_1}{L_n - \Zout_1 - s}\right).
\]
In order to write the full formula we first define $\{\mathscr{S}_k\}_{k \ge 0}$ as follows:
\begin{equation}
	\mathscr{S}_0 = 0, \quad \mathscr{S}_1 = \Zout_1, \quad \mathscr{S}_k = \sum_{j = 1}^{\ceil{k/2}} \Zout_j 
	+ \sum_{j = 1}^{\floor{k/2}} \Zin_j	\quad \text{for } k \ge 2.
	\label{eq:Bk}
\end{equation}
We then obtain, for $i \ge 2$,
\[
	\CProbn{H_n > i - 1}{H_n > i - 2, \mathscr{F}^{\ceil{i/2}, \floor{i/2}}}
	= 1(\mathscr{S}_{i} \le L_n)\prod_{s = 0}^{\Zout_{\ceil{i/2}} - 1} 
	\left(1 - \frac{\Zin_{\floor{i/2}}}{L_n - \mathscr{S}_{i - 2} - s}\right).
\]
Substituting this expression into~\eqref{eq:hopcount_conditional}  yields
\begin{align*}
	\mathbb{P}_n(H_n > k) &= \Expn{\Ind(\mathscr{S}_{k + 1} \le L_n) \prod_{i = 2}^{k + 1}
		\prod_{s = 0}^{\Zout_{\ceil{i/2}} - 1}
		\left(1 - \frac{\Zin_{\floor{i/2}}}{L_n - \mathscr{S}_{i - 2} - s}\right)}
		 \numberthis \label{eq:hopcount_main}
\end{align*}

The first result for the hopcount uses equation~\eqref{eq:hopcount_main} combined with Corollary
\ref{C.MainCoupling} to obtain an expression in terms of the branching processes 
$\{\hat Z_k^+: k \geq 1\}$ and $\{\hat Z_k^-: k \geq 1\}$. We use the notation $g(x) = O(f(x))$ as $x \to \infty$ if $\limsup_{x \to \infty} g(x)/f(x) < \infty$. 

\begin{prop} \label{P.HopCount}
Suppose that $({\bf D}_n^-, {\bf D}_n^+)$ satisfies Assumption \ref{A.Wasserstein}.  Then, for any 
$0 < \delta < 1$ and for any $0 \leq k \leq 2(1-\delta) \log_\mu n$, there exists a constant 
$a > 0$ such that 
\[
	\left| \mathbb{P}_n( H_n > k) - E\left[ \exp\left\{ -\frac{1}{\nu n} \sum_{i=2}^{k+1} 
	\hat Z_{\lceil i/2 \rceil}^{-} \hat Z_{	\lfloor i/2 \rfloor}^{+} \right\}  
	\right] \right| = O\left(n^{-a} \right), \qquad n \to \infty,
\]
where $\{ \hat Z_i^{+}: i \geq 1 \}$ and $\{ \hat Z_i^-: i \geq 1\}$ are independent delayed 
branching processes having offspring distributions $(g^+, f^+)$ and $(g^-, f^-)$, respectively.     
\end{prop}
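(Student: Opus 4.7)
I would start from the exact formula~\eqref{eq:hopcount_main} and convert the double product into the target exponential through three successive approximations: expand each logarithm, replace each denominator $L_n - \mathscr{S}_{i-2} - s$ by $\nu n$, and replace the exploration counts $Z_m^\pm$ by the branching-process counts $\hat Z_m^\pm$. Throughout I would work on the high-probability event $\mathcal{E}_n$ provided by Corollary~\ref{C.MainCoupling}, on which $Z_m^\pm \in \hat Z_m^\pm (1 \pm n^{-\gamma})$ for every $1 \le m \le (1-\delta)\log_\mu n$. On the complement $\mathcal{E}_n^c$, whose $\mathbb{P}_n$-probability is at most $K n^{-a}$, both the integrand of~\eqref{eq:hopcount_main} and the target exponential are bounded by $1$, so that contribution is trivially absorbed into the $O(n^{-a})$ error.

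On $\mathcal{E}_n$, geometric growth of $\hat Z_m^\pm$ together with a Markov bound using $E[\hat Z_m^\pm] = \nu \mu^{m-1}$ gives $\mathscr{S}_{k+1} = O(n^{1-\delta})$ with $\mathbb{P}_n$-probability $1-o(1)$; hence $1(\mathscr{S}_{k+1}\le L_n)$ equals $1$, and the ratio $Z_{\lfloor i/2\rfloor}^-/(L_n - \mathscr{S}_{i-2} - s)$ is uniformly $O(n^{-\delta})$ for the indices appearing in the product. Writing each factor as $\exp(\log(1-x))$ with $\log(1-x) = -x + O(x^2)$ turns the product into $\exp(-\Sigma_1 + O(\Sigma_2))$, with
\[
\Sigma_1 = \sum_{i=2}^{k+1}\sum_{s=0}^{Z^+_{\lceil i/2\rceil}-1}\frac{Z^-_{\lfloor i/2\rfloor}}{L_n - \mathscr{S}_{i-2}-s}, \qquad \Sigma_2 = \sum_{i=2}^{k+1}\sum_{s=0}^{Z^+_{\lceil i/2\rceil}-1}\left(\frac{Z^-_{\lfloor i/2\rfloor}}{L_n - \mathscr{S}_{i-2}-s}\right)^2.
\]
Replacing each denominator by $L_n$ costs a relative $O(\mathscr{S}_{k+1}/L_n) = O(n^{-\delta})$, so $\Sigma_1 = L_n^{-1}\sum_{i=2}^{k+1} Z^+_{\lceil i/2\rceil} Z^-_{\lfloor i/2\rfloor}(1+O(n^{-\delta}))$ and $\Sigma_2 = O(n^{-\delta})\,\Sigma_1$.

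Next I would replace $L_n$ by $\nu n$ using $|L_n/n - \nu|\le n^{-\varepsilon}$ from Remark~\ref{R.LimitMoments}(i), and replace each $Z_m^\pm$ by $\hat Z_m^\pm$ via the multiplicative coupling, obtaining
\[
\Sigma_1 = \frac{1}{\nu n}\sum_{i=2}^{k+1}\hat Z^+_{\lceil i/2\rceil}\hat Z^-_{\lfloor i/2\rfloor}\bigl(1 + O(n^{-\gamma'})\bigr), \qquad \gamma' := \min(\gamma,\varepsilon,\delta).
\]
The elementary inequality $\bigl|e^{-x(1+\eta)} - e^{-x}\bigr| \le |\eta|\sup_{u \ge 0}u\,e^{-u}$ then converts the multiplicative error on the exponent into an additive $O(n^{-\gamma'})$ error on the exponential, with an analogous bound handling the $\Sigma_2$ remainder. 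Finally, since $\{\hat Z_m^\pm\}$ depends only on the i.i.d.\ family $\{U_{\bf i}\}$, which is independent of $\mathscr{F}_n$, the $\mathbb{E}_n$-expectation of the resulting exponential equals the unconditional $E[\cdot]$, yielding the target expression.

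The main obstacle I anticipate is the uniform bookkeeping of the several layers of error when $k$ is close to its upper limit $2(1-\delta)\log_\mu n$: the inner products then contain $\Theta(n^{1-\delta})$ factors and the exponent $\sum \hat Z^+\hat Z^-/(\nu n)$ can itself be as large as $n^{1-2\delta}$, so $\Sigma_2$ is only small \emph{relative to} $\Sigma_1$ rather than in absolute terms. The saving observation is that whenever $\Sigma_1 \gg 1$ the exponential is already negligible, and more quantitatively the uniform bound $\sup_{x \ge 0}|e^{-x(1+O(n^{-\gamma'}))}-e^{-x}| = O(n^{-\gamma'})$ converts all exponent-level errors into a genuine polynomial rate $O(n^{-a})$ at the end.
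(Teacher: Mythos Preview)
Your proposal is correct and follows essentially the same approach as the paper: restrict to the coupling event from Corollary~\ref{C.MainCoupling}, control $\mathscr{S}_{k+1}$ by a Markov bound, convert the product in~\eqref{eq:hopcount_main} to an exponential via a second-order expansion of $\log(1-x)$ (the paper packages this step as Lemma~\ref{L.Taylor}), and then absorb the multiplicative errors on the exponent using $\sup_{x\ge 0} x e^{-x} = e^{-1}$. The only place to sharpen is your ``$\mathbb{P}_n$-probability $1-o(1)$'' for $\{\mathscr{S}_{k+1}\le n^b\}$: the statement demands a polynomial rate, which you get from the same Markov bound once you pick any $b\in(1-\delta,1)$ rather than $b=1-\delta$.
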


The next result shows a simplified expression for the limit in Proposition \ref{P.HopCount} in 
terms of the martingale limits $W^+$ and $W^-$. This result is independent of the coupling, and 
follows from the properties of the (delayed) branching processes $\{\hat Z_k^+: k \geq 1\}$ and 
$\{\hat Z_k^-: k \geq 1\}$. We state it here since it plays an important role in establishing
both Theorem~\ref{thm:main_result} and Proposition~\ref{prop:finite_paths}.

\begin{prop} \label{P.BPapprox}
Suppose $\{ \hat Z^+_i: i \geq 1\}$ and $\{ \hat Z^-_i: i \geq 1\}$ are independent delayed 
branching processes having offspring distributions $(g^+, f^+)$ and $(g^-, f^-)$, respectively. 
Suppose that $f^+, f^-$ have finite moments of order $1+\kappa \in (1, 2]$ with common mean 
$\mu > 1$, and $g^+, g^-$ have common mean $\nu$. Then, there exists $b > 0$ such that 
\[
	\left| E\left[ \exp\left\{-\frac{1}{\nu n} \sum_{i=2}^{k+1} \tbtZout_{\ceil{i/2}} 
		\tbtZin_{\floor{i/2}} \right\} - \exp\left\{-\frac{\nu \mu^{k}}{(\mu-1)n} W^+ W^- \right\} 
		\right] \right|= O\left( n^{-b} \right), \qquad n \to \infty,
\]
uniformly for all $k \in \mathbb{N}_+$, where $W^\pm = \lim_{k \to \infty} \hat Z^\pm_k/(\nu \mu^{k-1})$.
\end{prop}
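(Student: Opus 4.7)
The plan is to exploit the martingale representation $\hat Z^\pm_j = \nu\mu^{j-1}W^\pm_j$, where $W^\pm_j := \hat Z^\pm_j/(\nu\mu^{j-1})$ is a nonnegative mean-one martingale converging a.s.\ and in $L^{1+\kappa}$ to $W^\pm$. The key ingredient will be a geometric rate for this convergence: under the $(1+\kappa)$-moment hypothesis on $f^\pm$, a classical result on Galton--Watson martingales (Bingham--Doney, Heyde) yields $E|W^\pm_j - W^\pm|^{1+\kappa} \le C_0\mu^{-\kappa j}$, whence $E|W^\pm_j - W^\pm| \le C_1\mu^{-\eta j}$ with $\eta := \kappa/(1+\kappa) > 0$ by Jensen. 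If not taken off the shelf, this rate can be established by applying Burkholder's inequality to the telescoping martingale differences $W^\pm_{j+1} - W^\pm_j$ and summing a geometric series in $\mu^{-\kappa/(1+\kappa)}$.

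The first reduction is the elementary Lipschitz bound $|e^{-y} - e^{-x}| \le |y-x|$ for $x,y \ge 0$, which controls the target by $E|Y_k - X_k|$, where $Y_k := S_k/(\nu n)$ and $X_k := \nu\mu^k W^+W^-/((\mu-1)n)$. Substituting the martingale representation and using $\sum_{i=2}^{k+1}\mu^{i-2} = (\mu^k-1)/(\mu-1)$ gives
\[
Y_k - X_k = \frac{\nu}{n}\sum_{i=2}^{k+1}\mu^{i-2}\bigl(W^+_{\lceil i/2\rceil}W^-_{\lfloor i/2\rfloor} - W^+W^-\bigr) - \frac{\nu W^+W^-}{(\mu-1)n}.
\]
By independence of the two trees, $E[W^\pm]=1$, and the triangle inequality, $E|W^+_a W^-_b - W^+W^-| \le E|W^+_a - W^+| + E|W^-_b - W^-| \le C_2\mu^{-\eta(a\wedge b)}$. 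Since $\lceil i/2\rceil \wedge \lfloor i/2\rfloor \ge (i-1)/2$, the resulting sum is geometric with ratio $\mu^{1-\eta/2}$ and yields
\[
E|Y_k - X_k| \le \frac{C_3\mu^{(1-\eta/2)k}}{n} + \frac{C_4}{n},
\]
which is $O(n^{-b})$ for some fixed $b > 0$ as long as $k \le K\log_\mu n$ for any $K \in (1,(1-\eta/2)^{-1})$.

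The main obstacle is uniformity over all $k \in \mathbb{N}_+$: the Lipschitz bound above diverges once $k \gg \log_\mu n$. For such large $k$ I will instead rely on monotonicity together with a survival/extinction dichotomy. Both $X_k$ and $Y_k$ are nondecreasing in $k$; on $\{W^+W^- > 0\}$ both diverge to $\infty$ (so the exponentials tend to zero), while on $\{W^+W^- = 0\}$ at least one of the delayed trees is extinct in finite (random) time, which freezes $Y_k$ at $O(1/n)$ almost surely while $X_k \equiv 0$. Consequently both $E[e^{-Y_k}]$ and $E[e^{-X_k}]$ converge to the same limit $P(W^+W^- = 0)$ as $k \to \infty$.

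To make this uniform and quantitative, set $k^* := \lceil K\log_\mu n\rceil$ and telescope
\[
E[e^{-Y_k}] - E[e^{-X_k}] = \bigl(E[e^{-Y_k}] - E[e^{-Y_{k^*}}]\bigr) + \bigl(E[e^{-Y_{k^*}}] - E[e^{-X_{k^*}}]\bigr) + \bigl(E[e^{-X_{k^*}}] - E[e^{-X_k}]\bigr).
\]
The middle term is handled by the Lipschitz estimate at $k^*$; the outer two, nonnegative by monotonicity, will be controlled via a small-ball estimate for $W^\pm$ of the form $E[e^{-\lambda W^+ W^-}] - P(W^+W^- = 0) = O(\lambda^{-\alpha})$ for some $\alpha > 0$ (applied at $\lambda = \nu\mu^{k^*}/((\mu-1)n) \asymp n^{K-1}$), together with the $L^1$-integrability of the total extinction contribution to $Y_\infty - Y_{k^*}$ on $\{W^+W^- = 0\}$. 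The delicate points will be (i) establishing the martingale rate under only the minimal $(1+\kappa)$-moment assumption, and (ii) proving the small-ball bound for $W^+W^-$ at a polynomial rate, for which I would use the smoothness of the Laplace transform of $W^\pm$ near infinity together with the Bingham--Doney tail asymptotics.
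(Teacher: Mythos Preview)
Your strategy is sound and leads to a correct proof, but it is organized quite differently from the paper's.  For $k\le K\log_\mu n$ you use the quantitative martingale rate $E|W_j^\pm-W^\pm|\le C\mu^{-\eta j}$ (which does follow from a conditional Burkholder/Marcinkiewicz--Zygmund step plus telescoping, exactly as you sketch), and this lets you push the Lipschitz bound past $\log_\mu n$.  The paper does not use this rate at all: for $k\le m_n=(1-\epsilon)\log_\mu n$ it simply bounds $E|Y_k-X_k|\le E[Y_k]+E[X_k]=O(\mu^k/n)$.  For $k>m_n$ the paper does not telescope via a fixed $k^*$; instead it splits the sum $S_k$ at $i=m_n$, applies the sharper inequality $|e^{-y}-e^{-x}|\le e^{-(x\wedge y)}|x-y|$, and works on the event $\mathcal{C}_k=\{ |W_j^\pm-W^\pm|/W_j^\pm\le n^{-\eta}\text{ for all relevant }j\}$, so that the growing factor $|x-y|$ is killed by the exponential prefactor.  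The probability of $\mathcal{C}_k^c$ on survival is then controlled by a ``minimum growth'' lemma for $\hat Z_j^\pm$ (the paper's Lemma~\ref{L.Martingale}), whose proof already invokes the Dubuc/Biggins small-ball estimate $P(0<\mathcal{W}^\pm<x)=O(x^\alpha)$.  So both routes ultimately rest on the same left-tail input for $\mathcal{W}^\pm$; yours packages it as a small-ball bound for the product $W^+W^-$, which follows immediately from the individual bounds via $\{W^+W^-<t\}\subset\{W^+<\sqrt t\}\cup\{W^-<\sqrt t\}$.

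One point to tighten: your handling of the first outer term on the extinction event $\{W^+W^-=0\}$ should not be phrased as ``$L^1$-integrability of $Y_\infty-Y_{k^*}$''.  Conditionally on $W^+=0$ the process $\hat Z^+$ is subcritical with mean $\lambda^+<1$, but $E[(Y_\infty-Y_{k^*})\,1(W^+=0)]$ involves $\sum_{i>k^*}(\lambda^+)^{\lceil i/2\rceil}\mu^{\lfloor i/2\rfloor}$, and nothing rules out $\lambda^+\mu>1$, in which case that expectation diverges.  The correct (and simpler) bound is
\[
E\bigl[(e^{-Y_{k^*}}-e^{-Y_k})\,1(W^+W^-=0)\bigr]\le P\bigl(\hat Z^+_{\lceil k^*/2\rceil}\hat Z^-_{\lfloor k^*/2\rfloor}>0,\ W^+W^-=0\bigr)=O\bigl((\lambda^+)^{k^*/2}+(\lambda^-)^{k^*/2}\bigr),
\]
using only Markov's inequality and the subcritical conditional mean (this is exactly how the paper handles the analogous term).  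With that replacement, all three telescoped pieces are $O(n^{-b})$ and your argument goes through.
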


Combining Propositions \ref{P.HopCount} and \ref{P.BPapprox}, we immediately obtain the following 
result.

\begin{thm} \label{T.MainTheoremBody}
Suppose $({\bf D}_n^-, {\bf D}_n^+)$ satisfies Assumption \ref{A.Wasserstein}.  Then, for any 
$0 < \delta < 1$ and for any $0 \leq k \leq 2(1-\delta) \log_\mu n$, there exists a constant 
$c > 0$ such that
\[
	\left| \mathbb{P}_n(H_n > k) - E\left[ \exp \left\{ - \frac{\nu \mu^{k}}{(\mu-1)n} W^- W^+ 
	\right\} \right] \right| = O\left( n^{-c} \right), \qquad n \to \infty,
\]
where $W^\pm = \lim_{k \to \infty} \hat Z^\pm_k/(\nu \mu^{k-1})$, with $W^+$ and $W^-$ independent 
of each other.
\end{thm}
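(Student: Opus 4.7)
The plan is to derive this theorem as an immediate consequence of the triangle inequality applied to Propositions \ref{P.HopCount} and \ref{P.BPapprox}, which together form a bridge from the conditional hopcount probability to the branching-process martingale-limit expression.

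First I would recall that by Proposition \ref{P.HopCount}, since the bi-degree sequence satisfies Assumption \ref{A.Wasserstein}, there exists a constant $a > 0$ such that for any $0 < \delta < 1$ and any $0 \leq k \leq 2(1-\delta)\log_\mu n$,
\[
\left| \mathbb{P}_n(H_n > k) - E\left[ \exp\left\{ -\frac{1}{\nu n} \sum_{i=2}^{k+1} \hat Z_{\lceil i/2 \rceil}^{-} \hat Z_{\lfloor i/2 \rfloor}^{+} \right\} \right] \right| = O(n^{-a}),
\]
where $\{\hat Z_i^+\}$ and $\{\hat Z_i^-\}$ are the independent delayed Galton-Watson processes with offspring distributions $(g^+, f^+)$ and $(g^-, f^-)$. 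Next I would invoke Proposition \ref{P.BPapprox}: the hypothesis that $f^+, f^-$ have finite moment of order $1 + \kappa > 1$ (which follows from Remark \ref{R.LimitMoments}(ii) under Assumption \ref{A.Wasserstein}), together with $\mu > 1$, guarantees that there exists $b > 0$ such that, uniformly in $k \in \mathbb{N}_+$,
\[
\left| E\left[ \exp\left\{ -\frac{1}{\nu n} \sum_{i=2}^{k+1} \hat Z_{\lceil i/2 \rceil}^{-} \hat Z_{\lfloor i/2 \rfloor}^{+} \right\} \right] - E\left[ \exp\left\{ -\frac{\nu \mu^{k}}{(\mu-1)n} W^+ W^- \right\} \right] \right| = O(n^{-b}).
\]

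The conclusion then follows by the triangle inequality with $c = \min\{a, b\} > 0$, noting that the uniformity in $k$ for the second bound and the matching range $0 \leq k \leq 2(1-\delta)\log_\mu n$ for the first bound align with the range stated in the theorem. The independence of $W^+$ and $W^-$ is inherited from the independence of the two delayed branching processes in Proposition \ref{P.BPapprox}, which in turn reflects the fact that in the coupling of Section \ref{S.Coupling} the outbound exploration from node $1$ and the inbound exploration from node $2$ are coupled to independent trees.

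There is no real obstacle here: all the substantive work sits in Propositions \ref{P.HopCount} and \ref{P.BPapprox} (respectively the coupling-based replacement of the pairing probability by the branching-process functional, and the Laplace-transform convergence of the normalized sum to the martingale product). The only thing to be careful about is bookkeeping of the admissible range of $k$ and of the exponent $c$, both of which are determined by the minimum of the rates produced by the two preceding propositions.
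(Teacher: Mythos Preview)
Your proposal is correct and matches the paper's own argument exactly: the paper states that Theorem~\ref{T.MainTheoremBody} follows immediately by combining Propositions~\ref{P.HopCount} and~\ref{P.BPapprox}, which is precisely the triangle-inequality step you describe with $c=\min\{a,b\}$.
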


As a corollary of Theorem~\ref{T.MainTheoremBody} we obtain the following result for the probability that there exists a directed path between two randomly chosen nodes, which implies Proposition~\ref{prop:finite_paths}.

\begin{cor} \label{C.FiniteHopcount}
Suppose $({\bf D}_n^-, {\bf D}_n^+)$ satisfies Assumption \ref{A.Wasserstein}. Then, there exists a 
constant $c> 0$ such that 
\[
	\left| \mathbb{P}_n(H_n < \infty) - s^+ s^- \right| = O\left( n^{-c} \right), \qquad n \to \infty,
\]
where $s^\pm = P(W^\pm > 0)$. 
\end{cor}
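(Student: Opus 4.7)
The plan is to establish matching upper and lower bounds on $\mathbb{P}_n(H_n < \infty)$, using Theorem~\ref{T.MainTheoremBody} for the former and an extinction argument built from Corollary~\ref{C.MainCoupling} for the latter.

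For the lower bound, I would fix a small $\delta \in (0,1/2)$ and apply Theorem~\ref{T.MainTheoremBody} at $k_n := \lfloor 2(1-\delta)\log_\mu n\rfloor$, the largest admissible value. Setting $s_n = \nu\mu^{k_n}/((\mu-1)n)$, which grows at least like $n^{1-2\delta}$, the trivial inclusion $\{H_n \leq k_n\}\subseteq\{H_n<\infty\}$ gives
\[
    \mathbb{P}_n(H_n < \infty)\ \geq\ 1 - \mathbb{P}_n(H_n > k_n)\ =\ 1 - E\!\left[e^{-s_n W^+ W^-}\right] + O(n^{-c}).
\]
Splitting on $\{W^+W^->0\}$ and using the independence of $W^+$ and $W^-$ together with $P(W^\pm=0)=1-s^\pm$,
\[
    E\!\left[e^{-s_n W^+ W^-}\right]\ =\ (1 - s^+ s^-) + E\!\left[e^{-s_n W^+ W^-}\mathbf{1}(W^+ W^- > 0)\right].
\]
The residual expectation is bounded by $P(0 < W^+W^- \leq s_n^{-1/2}) + e^{-\sqrt{s_n}}$; the exponential term is super-polynomially small, and the probability term is controlled by $P(0<W^+W^-\le t)\le P(0<W^+\le \sqrt t)+P(0<W^-\le \sqrt t)$, so it suffices to prove $P(0<W^\pm\le r)=O(r^\beta)$ for some $\beta>0$. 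This is the classical statement that the martingale limit of a supercritical Galton-Watson process, conditioned on survival, has a bounded density near zero under the $L\log L$ assumption, which holds here since $f^\pm$ has a finite $(1+\kappa)$-th moment by Assumption~\ref{A.Wasserstein}. Combining yields $\mathbb{P}_n(H_n < \infty) \geq s^+ s^- - O(n^{-c})$.

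For the upper bound, I would show $\mathbb{P}_n(H_n = \infty) \geq 1 - s^+s^- - O(n^{-c})$ via an extinction argument. Set $k_n' := \lfloor (1-\delta)\log_\mu n\rfloor$, which sits in the range of Corollary~\ref{C.MainCoupling}, and define the events $E^\pm := \{Z^\pm_{k_n'} = 0\}$. Corollary~\ref{C.MainCoupling} implies $\mathbb{P}_n(E^\pm) = \mathbb{P}_n(\hat Z^\pm_{k_n'} = 0) + O(n^{-a})$, and since extinction probabilities of a supercritical Galton-Watson process converge geometrically fast to $1-s^\pm$, this equals $(1-s^\pm) + O(n^{-c})$. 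Because the coupled branching processes $\hat Z^+$ and $\hat Z^-$ are independent by construction, one also gets $\mathbb{P}_n(E^+\cap E^-) = (1-s^+)(1-s^-) + O(n^{-c})$, so $\mathbb{P}_n(E^+\cup E^-) = 1 - s^+s^- + O(n^{-c})$. Finally, on $E^+$ the out-component of node~$1$ has size $\sum_{j<k_n'}Z_j^+$, which by coupling with an extinct Galton-Watson tree has bounded expectation; since node~$2$ is uniform in $\{1,\dots,n\}$, this yields $\mathbb{P}_n(2\in C^+(1), E^+) = O(1/n)$, hence $\mathbb{P}_n(H_n=\infty,E^+)\ge \mathbb{P}_n(E^+)-O(1/n)$, and symmetrically for $E^-$. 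Combining,
\[
    \mathbb{P}_n(H_n=\infty)\ \geq\ \mathbb{P}_n(E^+\cup E^-) - O(1/n)\ =\ 1 - s^+s^- - O(n^{-c}),
\]
completing the upper bound.

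The main technical obstacle is the polynomial bound $P(0<W^\pm\le r)=O(r^\beta)$ used in the lower bound; extracting this either requires invoking the Dubuc/Biggins regularity theory for supercritical Galton-Watson martingale limits or deriving it directly from the fixed-point equation $W^\pm \stackrel{d}{=} \mu^{-1}\sum_{i=1}^{\mathscr{D}^\pm} W^\pm_i$ by iteration. Everything else, including the geometric extinction rate and the independence of the in- and out-exploration couplings, is essentially built into the coupling machinery of Section~\ref{S.Coupling}.
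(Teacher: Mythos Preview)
Your proof is correct, and the lower bound argument is essentially identical to the paper's: apply Theorem~\ref{T.MainTheoremBody} at the largest admissible $k$, split on $\{W^+W^->0\}$, and control $E[e^{-s_nW^+W^-}\mathbf 1(W^+W^->0)]$ via the Dubuc/Biggins polynomial bound on $P(0<W^\pm\le r)$, which the paper has already proved inside Lemma~\ref{L.Martingale}.

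The upper bound, however, takes a genuinely different route. The paper does not argue via extinction of the exploration process and smallness of the out-component. Instead it bounds $\mathbb P_n(\omega_n<H_n<\infty)$ directly from the explicit product formula~\eqref{eq:hopcount_main}, obtaining
\[
\mathbb P_n(\omega_n<H_n<\infty)\ \le\ \mathbb E_n\Bigl[\mathbf 1\bigl(Z^+_{\lceil(\omega_n+1)/2\rceil}Z^-_{\lfloor(\omega_n+1)/2\rfloor}>0\bigr)\prod_{i,s}\Bigl(1-\tfrac{Z^-_{\lfloor i/2\rfloor}}{L_n}\Bigr)\Bigr],
\]
and then reruns the approximation machinery of Propositions~\ref{P.HopCount} and~\ref{P.BPapprox} with the extra indicator, splitting on $\mathcal B=\{W^+W^->0\}$. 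Your argument is more elementary: on $E^\pm=\{Z^\pm_{k_n'}=0\}$ the relevant component has bounded expected size via the coupling with the extinct (subcritical, by Lemma~\ref{L.Extinction}) branching process, so the second uniformly chosen node lands in it with probability $O(1/n)$. This avoids reopening the product formula entirely. The paper's approach has the advantage of staying within a single unified estimate for all the error terms, whereas yours trades that uniformity for a shorter, self-contained argument that uses only Corollary~\ref{C.MainCoupling}, the joint coupling event $\mathcal E_k$ from the proof of Proposition~\ref{P.HopCount} (needed for the claimed factorisation $\mathbb P_n(E^+\cap E^-)=(1-s^+)(1-s^-)+O(n^{-c})$), and the geometric extinction rate.
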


Noting that
$$\mathbb{P}_n(H_n > k) = \mathbb{P}_n(H_n > k | H_n < \infty) \mathbb{P}_n(H_n < \infty) + \mathbb{P}_n(H_n = \infty),$$
defining $\mathcal{B} = \{ W^+ W^- > 0\}$, and using Theorem~\ref{T.MainTheoremBody} and Corollary~\ref{C.FiniteHopcount} gives
\begin{align*}
\mathbb{P}_n(H_n > k | H_n < \infty) &= \frac{\mathbb{P}_n(H_n > k) - \mathbb{P}_n(H_n = \infty)}{\mathbb{P}_n(H_n < \infty) } \\
&= \frac{1}{P(\mathcal{B})} E\left[ \exp \left\{ - \frac{\nu \mu^{k}}{(\mu-1)n} W^- W^+ 
	\right\} \right]  - \frac{P(\mathcal{B}^c)}{P(\mathcal{B})} + O\left( n^{-c} \right) \\
	&= E\left[ \left. \exp \left\{ - \frac{\nu \mu^{k}}{(\mu-1)n} W^- W^+ 
	\right\} \right| W^+ W^- > 0 \right] + O\left( n^{-c} \right)
\end{align*}
as $n \to \infty$ and for the range of values of $k$ indicated in the theorems. Now define for $x \in \mathbb{R}$,
\[
	V_n(x) = 1 -  E\left[\left.\exp\left\{ - \frac{ \nu \mu^{\lfloor \log_\mu n \rfloor + \lfloor x \rfloor}}{(\mu-1) n} W^+ W^-\right\} 
	\right| W^+ W^- > 0\right].
\]
That $V_n(x)$ is a cumulative distribution function for each fixed $n$ follows from noting that it is non-decreasing with $\lim_{x \to -\infty} V_n(x) = 0$ and $\lim_{x \to \infty} V_n(x) = 1$. Letting $\mathcal{H}_n$ be a random variable having distribution $V_n$ gives Theorem \ref{thm:main_result}.

The remainder of the paper is devoted to the proofs of all the results presented in Sections \ref{S.Coupling} and~\ref{S.Distances}.  


\section{Proofs} \label{S.Appendix}

This section consists of four subsections. In Section \ref{SS.BPs} we prove some general results about delayed branching processes, including a bound for its minimum growth conditional on non-extinction.  Section~\ref{SS.CouplingProofs} contains the proof of Theorem \ref{thm:main_coupling}, our main coupling theorem. The proofs of our results for the hopcount, Proposition~\ref{P.HopCount}, Proposition~\ref{P.BPapprox}, Theorem~\ref{T.MainTheoremBody}, and Corollary~\ref{C.FiniteHopcount}, are given in Section~\ref{SS.ProofsDistances}. Finally, Section~\ref{SS.TechnicalLemmata} contains the proof of Theorem \ref{T.IIDAlgorithm}, which shows that the i.i.d.~algorithm given in Section \ref{SS.IIDAlgorithm} satisfies the main assumptions in the paper.

\subsection{Some results for delayed branching processes} \label{SS.BPs}

Our first result for a general delayed branching process is an expression for its extinction probability in terms of the probability of extinction of the corresponding non-delayed process, as well as for the distribution of its number of offspring  conditional on extinction. Since these results are independent of the coupling with the graph, we do not use the $\pm$ notation.

\begin{lemma} \label{L.Extinction}
Let $\{\mathcal{Z}_k: k \geq 0\}$ denote a (non-delayed) branching process having offspring 
distribution $f$ and extinction probability $q$ and let $\{\hat Z_k: k \geq 1\}$ be a delayed 
branching process having offspring distributions $(g, f)$. Suppose $q > 0$. Then, conditioned on 
extinction, $\{ \hat Z_k: k \geq 1\}$ is a delayed branching process with offspring distributions $(\tilde g, \tilde f)$ with
\[
	\tilde g(i) = \frac{ g(i) q^i}{\sum_{t=0}^\infty g(t) q^t} \qquad \text{and} \qquad 
	\tilde f(i) = f(i) q^{i-1}, \qquad i \geq 0.
\]
Moreover, $P\left( \hat Z_k = 0 \text{ for some $k \geq 1$} \right) = \sum_{t=0}^\infty g(t) q^t$. 
\end{lemma}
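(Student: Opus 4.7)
My plan is to exploit the branching property together with Bayes' rule, treating the extinction event as the intersection of extinction events for the subtrees rooted at each first-generation individual.

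First I would compute $P(\hat Z_k = 0 \text{ for some } k \geq 1)$. Condition on $\hat Z_1 = i$. Then the process $\{\hat Z_k : k \geq 2\}$ decomposes as the union of $i$ independent copies of the non-delayed branching process $\{\mathcal{Z}_k: k \geq 0\}$ with offspring distribution $f$, one rooted at each first-generation individual. Extinction of $\{\hat Z_k\}$ is equivalent to simultaneous extinction of all $i$ subtrees, which has probability $q^i$ by independence. Summing yields
\[
P(\hat Z_k = 0 \text{ for some } k) = \sum_{i=0}^\infty g(i) q^i,
\]
which I will abbreviate by $Q$. Incidentally, $Q > 0$ since $q > 0$.

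Next I would apply Bayes' rule to obtain the conditional distribution of $\hat Z_1$. Using the same decomposition,
\[
P(\hat Z_1 = i \mid \text{extinction}) = \frac{P(\hat Z_1 = i) P(\text{extinction} \mid \hat Z_1 = i)}{Q} = \frac{g(i) q^i}{Q} = \tilde g(i),
\]
as required. This handles the root.

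For the non-root individuals I would use the standard fact that, conditional on extinction, a non-delayed Galton--Watson tree with offspring distribution $f$ is itself a Galton--Watson tree with offspring distribution $\tilde f(i) = f(i) q^{i-1}$. The argument is the same Bayes computation: for any individual in generation $k \geq 1$ with offspring count $N$, the event that the subtree rooted at that individual dies out has probability $q$ (unconditionally) and $q^N$ given $N$; hence
\[
P(N = i \mid \text{subtree extinct}) = \frac{f(i) q^i}{q} = \tilde f(i).
\]
To upgrade this per-individual statement to the statement that the full tree, conditioned on extinction, is a branching process with offspring distributions $(\tilde g, \tilde f)$, I would argue inductively on the generations: given $\hat Z_1 = i$ and extinction, the $i$ subtrees are conditionally independent (since they were independent before conditioning, and the extinction event factors as an intersection of independent events), and each is distributed as a non-delayed Galton--Watson tree with offspring $f$ conditioned on its own extinction, which by the computation above has offspring distribution $\tilde f$. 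Iterating this one-generation-at-a-time description gives the claimed joint distribution.

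The main obstacle will be writing the conditional independence step cleanly: one must be careful that conditioning on the global extinction event does not introduce hidden dependencies between the subtrees. The key point, which I would make explicit, is that the global extinction event is precisely the intersection of the individual subtree-extinction events, and these events are independent because the subtrees themselves are independent; therefore the conditional joint law factors as a product of the conditional marginals. Everything else is bookkeeping with generating functions.
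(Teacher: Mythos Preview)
Your proposal is correct and follows essentially the same approach as the paper: both condition on the first generation, exploit the decomposition into independent subtrees to compute the extinction probability $\sum_t g(t)q^t$ and the conditional law of the root, and then invoke the classical fact that a non-delayed Galton--Watson tree conditioned on extinction has offspring law $\tilde f(i)=f(i)q^{i-1}$. The only cosmetic difference is that the paper packages the argument via probability generating functions (computing $P(W=0)\,E[s^{\hat Z_k}\mid W=0]$), whereas you carry out the same computation directly with Bayes' rule; the underlying decomposition and the appeal to the standard non-delayed result are identical.
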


\begin{proof}
Let $\hat \chi_\emptyset$ have distribution $g$ and let $\{ \mathcal{Z}_{k-1,i}\}_{i \geq 1}$ be a sequence of i.i.d.~copies of $\mathcal{Z}_{k-1}$, independent of $\hat \chi_\emptyset$; set $\mu$ to be the mean of $f$. Computing the probability generating function of $\hat Z_k$ we obtain
\begin{align*}
P(W = 0) E\left[ \left. s^{\hat Z_k} \right| W = 0\right] 
&= E\left[ \left( E\left[ \left. s^{\mathcal{Z}_{k-1} } \right| \mathcal{W} = 0 \right] q \right) ^{\hat \chi_\emptyset}  \right] ,
\end{align*}
where $\mathcal{W}_i$ is the a.s. limit of the martingale $\{\mathcal{Z}_{k,i}/\mu^k: k \geq 0\}$ that has as root the $i$th individual in the first generation of $\{\hat Z_k: k \geq 1\}$.  Also, 
$$P(W = 0) = P( \hat \chi_\emptyset = 0) + P\left( \hat \chi_\emptyset \geq 1, \, \bigcap_{i=1}^{\hat \chi_\emptyset} \{ \mathcal{W}_i = 0 \} \right) = \sum_{j=0}^\infty g(j) q^j.$$
Hence,
$$E\left[ \left. s^{\hat Z_k} \right| W = 0\right] = \sum_{j=0}^\infty \left( E\left[ \left. s^{\mathcal{Z}_{k-1} } \right| \mathcal{W} = 0 \right] \right)^j \tilde g(j),$$ 
where 
$$\tilde g(j) = \frac{q^j g(j)}{\sum_{t=0}^\infty g(t) q^t}, \qquad j \geq 0.$$
Since conditionally on extinction $\{\mathcal{Z}_k : k \geq 0\}$ is a subcritical (non-delayed) branching process with offspring distribution $\tilde f(j) = f(j) q^{j-1}$, $j \geq 0$ (see, e.g., \cite{Athreya2012}, pp. 52), the result follows. 
\end{proof}

The second result we show is in some sense the counterpart of Doob's 
maximal martingale inequality, and it states that provided the limiting martingale is strictly positive, the 
branching process itself cannot grow too slowly. For this result and others in this section, we use the following 
version of Burkholder's inequality, which we state without proof. 
\begin{lemma} \label{L.Burkholder}
Let $\{X_i\}_{i\geq 1}$ be a sequence of i.i.d., mean zero random variables such that $E[|X_1|^{1+\kappa}] < \infty$ for some $0 < \kappa \leq 1$. Then,
$$P\left( \left| \sum_{i=1}^n X_i \right| > x \right) \leq \frac{1}{x^{1+\kappa}} E\left[ \left| \sum_{i=1}^n X_i \right|^{1+\kappa} \right] \leq Q_{1+\kappa} E[ |X_1|^{1+\kappa}]  \frac{n}{x^{1+\kappa}},$$
where $Q_{1+\kappa}$ is a constant that depends only on $\kappa$. 
\end{lemma}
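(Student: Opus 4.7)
The plan is to establish the two stated inequalities in order. The first one is just Markov's inequality applied to the nonnegative random variable $|S_n|^{1+\kappa}$, where $S_n = \sum_{i=1}^n X_i$: since $\{|S_n| > x\} = \{|S_n|^{1+\kappa} > x^{1+\kappa}\}$, Markov gives
$$P(|S_n| > x) \leq x^{-(1+\kappa)} E[|S_n|^{1+\kappa}]$$
directly. There is no subtlety here.

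The second inequality is the substantive one, and the cleanest route is to treat $S_n$ as a martingale and invoke the classical Burkholder inequality. Setting $M_0 = 0$ and $M_k = \sum_{i=1}^k X_i$, the sequence $(M_k)$ is a martingale with respect to the natural filtration because the $X_i$ are i.i.d.\ and mean-zero, with martingale differences $d_i = X_i$. For $p = 1 + \kappa \in (1,2]$, Burkholder's inequality yields a universal constant $C_p < \infty$ such that
$$E[|M_n|^p] \leq C_p\, E\!\left[\Bigl(\sum_{i=1}^n d_i^2\Bigr)^{p/2}\right].$$
Since $p/2 \in (1/2, 1]$, the map $x \mapsto x^{p/2}$ is concave on $[0,\infty)$ and therefore subadditive, so $(\sum a_i)^{p/2} \leq \sum a_i^{p/2}$ for nonnegative $a_i$. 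Applying this with $a_i = X_i^2$ and using that the $X_i$ are identically distributed,
$$E\!\left[\Bigl(\sum_{i=1}^n X_i^2\Bigr)^{p/2}\right] \leq \sum_{i=1}^n E[|X_i|^{1+\kappa}] = n\, E[|X_1|^{1+\kappa}].$$
Setting $Q_{1+\kappa} = C_{1+\kappa}$ completes the bound.

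The only delicate step is the appeal to Burkholder's inequality, which is a nontrivial piece of martingale machinery but entirely classical. An elementary alternative that avoids it is to prove the von Bahr--Esseen inequality directly by induction on $n$: using the inequality $|s+t|^p \leq |s|^p + p|s|^{p-1}\mathrm{sgn}(s) t + |t|^p$ valid for $p \in [1,2]$, conditioning on $\mathcal{F}_{n-1}$, and exploiting the mean-zero property of $X_n$ to kill the cross term, one obtains the stated bound with explicit constant $Q_{1+\kappa} = 2$. Either route works; the main (minor) obstacle is simply choosing between quoting Burkholder or reproducing the von Bahr--Esseen argument, as the two inequalities of the lemma themselves are a Markov bound plus a standard moment estimate for sums of independent mean-zero variables.
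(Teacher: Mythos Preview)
The paper explicitly states this lemma without proof, so there is no argument to compare against. Your proof is correct: the first inequality is Markov, and the second follows from Burkholder's martingale inequality together with the subadditivity of $x\mapsto x^{p/2}$ for $p=1+\kappa\le 2$, exactly as you wrote.

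One small caveat on the alternative sketch: the pointwise inequality $|s+t|^p \le |s|^p + p|s|^{p-1}\mathrm{sgn}(s)\,t + |t|^p$ does not hold for all $s,t$ and all $p\in[1,2]$ (e.g.\ $p=1$, $s=1$, $t=-3$). The correct elementary route is either to use the version with a constant $2$ in front of $|t|^p$, or to invoke the von Bahr--Esseen theorem directly, which indeed gives $E|S_n|^{1+\kappa}\le 2\sum_i E|X_i|^{1+\kappa}$ for independent mean-zero summands. Since your primary argument via Burkholder is complete, this is only a cosmetic fix.
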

%

\begin{lemma} \label{L.Martingale}
Suppose $\{\hat Z_k: k \geq 1\}$ is a delayed branching process with offspring distributions 
$(g, f)$, where
$f$ has finite $1+\kappa \in (1, 2]$ moment and mean $\mu > 1$, and $g$ has finite mean $\nu> 0$. 
Let $W = \lim_{k \to \infty} \hat Z_k/(\nu \mu^{k-1})$. Then, for any $1 < u < \mu$, there exists a constant $Q_1 < \infty$ such that for any $k \geq 1$, 
\[
	P\left( \inf_{r \geq k} \frac{\hat Z_r}{u^{r}} < 1, \,  W > 0 \right) \leq Q_1 \left( 
	u^{-\kappa k} + (u/\mu)^{\alpha k} 1(q > 0) \right),
\]
where $q$ is the extinction probability of a branching process having offspring distribution $f$,  $\lambda = \sum_{i=1}^\infty f(i) i q^{i-1}$, and $\alpha = -\log \lambda/\log\mu > 0$ if $q > 0$. 
\end{lemma}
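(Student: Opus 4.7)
The plan is to split the event at the first time $\hat Z_r$ falls below $u^r$, namely $\tau_k := \inf\{r \geq k : \hat Z_r < u^r\}$, and to handle the cases $\tau_k > k$ and $\tau_k = k$ separately. Since the $(1+\kappa)$-moment of $f$ implies $\sum_j j \log j \, f(j) < \infty$, Kesten-Stigum yields that $\{W > 0\}$ agrees up to null sets with the non-extinction event of the delayed branching process, which I will freely identify with the conditioning event throughout.

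For the contribution of $\{\tau_k > k\}$, observe that on $\{\tau_k = r\}$ with $r > k$ one has $\hat Z_{r-1} \geq u^{r-1}$ by the definition of $\tau_k$. Conditionally on $\hat Z_{r-1} = m$, the size $\hat Z_r = \sum_{i=1}^m \xi_i$ is an i.i.d.\ sum of offspring $\xi_i \sim f$, and since $u < \mu$ we have $\mu m - u^r \geq (\mu - u) m > 0$ whenever $m \geq u^{r-1}$. Burkholder's inequality (Lemma~\ref{L.Burkholder}) then gives
\[
P(\hat Z_r < u^r \mid \hat Z_{r-1} = m) \leq \frac{C\,m}{(\mu m - u^r)^{1+\kappa}} \leq \frac{C'}{u^{(r-1)\kappa}}
\]
uniformly in $m \geq u^{r-1}$; summing over $r > k$ produces a geometric series in $u^{-\kappa}$ contributing $O(u^{-k\kappa})$.

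For $\{\tau_k = k\} = \{\hat Z_k < u^k\}$ I work with the martingale $M_r = \hat Z_r/(\nu\mu^{r-1})$, so that $\{\hat Z_k < u^k\} = \{M_k < a_k\}$ with $a_k = (u/\mu)^k\,\mu/\nu$. For a threshold $\theta \geq 2 a_k$ to be chosen, decompose
\[
\{M_k < a_k,\, W > 0\} \subseteq \{\,|M_k - W| > \theta - a_k,\, W > \theta\,\} \cup \{\,0 < W \leq \theta\,\}.
\]
Burkholder applied to the martingale differences $M_{r+1} - M_r$ (which, given the past, are centered i.i.d.\ sums of $\hat Z_r$ offspring) gives $E[|M_{r+1}-M_r|^{1+\kappa}] = O(\mu^{-r\kappa})$, and telescoping via Burkholder for martingales (using $(1+\kappa)/2 \leq 1$) yields $E[|W - M_k|^{1+\kappa}] = O(\mu^{-k\kappa})$, so Markov bounds the first set by $O(\mu^{-k\kappa}/\theta^{1+\kappa})$. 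For the second set, Lemma~\ref{L.Extinction} identifies the process conditioned on extinction as a subcritical branching process with offspring mean $\lambda$; iterating the fixed-point equation satisfied by the Laplace transform $L(s) = E[e^{-sW}]$ gives $L(s) - q = O(s^{-\alpha})$ as $s \to \infty$ with $\alpha = -\log\lambda/\log\mu$, from which a Tauberian argument yields $P(0 < W \leq \theta) = O(\theta^\alpha)$ when $q > 0$, while this contribution vanishes when $q = 0$. Choosing $\theta \asymp (u/\mu)^k$ balances the two pieces and supplies the $(u/\mu)^{\alpha k}\,\Ind(q > 0)$ term.

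The main obstacle is twofold: first, making the tail estimate $P(0 < W \leq \theta) \leq C\theta^\alpha$ genuinely uniform in $\theta$ (and not merely asymptotic as $\theta \downarrow 0$) requires a quantitative handling of the iteration for $L$ combined with Lemma~\ref{L.Extinction}; second, showing that the residual Doob contribution $O((\mu/u^{1+\kappa})^k)$ from the first set in the decomposition of $\{\hat Z_k < u^k,\, W > 0\}$ can be absorbed into the $u^{-\kappa k}$ bound for every $1 < u < \mu$ appears to require a separate argument in the regime $u^{1+\kappa} < \mu$, for instance by restarting the first-step Burkholder bound at an earlier generation where $\hat Z_s \geq u^s$ has already been secured.
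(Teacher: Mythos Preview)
Your overall architecture matches the paper's: a telescoping Burkholder bound handles the event that $\hat Z_r$ first drops below $u^r$ at some $r>k$, and the remaining piece $\{\hat Z_k<u^k,\,W>0\}$ is split according to whether $W$ is small or $W-W_k$ is large. The difficulty you flag at the end, however, is a genuine gap, not a loose end.

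Your estimate $E[|W-M_k|^{1+\kappa}]=O(\mu^{-\kappa k})$ is correct but too weak for the purpose. With $\theta\asymp (u/\mu)^k$, Markov gives $P(|W-M_k|>\theta/2)=O((\mu/u^{1+\kappa})^k)$, which blows up whenever $u<\mu^{1/(1+\kappa)}$, and no rearrangement of the threshold $\theta$ repairs this: the unconditional moment bound averages over the typical event $\hat Z_k\approx \mu^k$, which is irrelevant on $\{\hat Z_k<u^k\}$. The fix is to condition on $\hat Z_k$ and use the branching representation
\[
W-W_k=\frac{1}{\nu\mu^{k-1}}\sum_{{\bf i}\in\hat A_k}(\mathcal W_{\bf i}-1),
\]
where the $\mathcal W_{\bf i}$ are i.i.d.\ copies of the (non-delayed) martingale limit. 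Burkholder applied conditionally yields
\[
P\!\left(\sum_{{\bf i}\in\hat A_k}(\mathcal W_{\bf i}-1)\geq u^k\ \Big|\ \hat Z_k\right)\leq C\,\frac{\hat Z_k}{u^{(1+\kappa)k}},
\]
and on $\{\hat Z_k<u^k\}$ this is at most $C\,u^{-\kappa k}$ for every $1<u<\mu$, with no restriction of the form $u^{1+\kappa}\geq\mu$. This is exactly how the paper proceeds, and it is the missing idea in your argument; the vague suggestion of ``restarting at an earlier generation'' does not substitute for it.

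A secondary point: your Tauberian route to $P(0<W\leq\theta)=O(\theta^\alpha)$ is only sketched, and you correctly worry about uniformity in $\theta$. The paper bypasses this by first reducing from the delayed limit $W$ to the non-delayed limit $\mathcal W$ (via $W=\nu^{-1}\sum_{i=1}^{\hat\chi_\emptyset}\mathcal W_i$, so $P(0<W<x)\leq P(0<\mathcal W<\nu x)$) and then invoking known density bounds for $\mathcal W$ near $0$ (Dubuc 1971 when $q>0$, Biggins 1993 when $f(0)+f(1)=0$), which give the required uniform estimate directly.
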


\begin{proof}
We start by defining for $r \geq k$ the event $D_r = \{ \min_{k \leq j \leq r} \hat Z_j/u^j \geq 
1\}$ and letting $a_r = P(W > 0, \, (D_r)^c)$. Let $\{\hat \chi, \hat \chi_i \}$ be a sequence of 
i.i.d. random variables having distribution $f$ and use the technical Lemma \ref{L.Burkholder}, applied 
conditionally on $\hat Z_{r-1}$, to obtain
\begin{align*}
a_r &\leq P\left( D_{r-1}, \, \hat Z_r \leq u^{r}  \right) + a_{r-1} \\
&\leq P\left( \hat Z_{r-1} \geq u^{r-1}, \, \sum_{i=1}^{\hat Z_{r-1}} \hat \chi_i \leq u \hat Z_{r-1}    \right) + a_{r-1} \\
&\leq P\left( \hat Z_{r-1} \geq u^{r-1}, \, \sum_{i=1}^{\hat Z_{r-1}} (\mu - \hat \chi_i) \geq (\mu-u) \hat Z_{r-1}   \right) + a_{r-1} \\
&\leq E\left[  1( \hat Z_{r-1} \geq u^{r-1}) \frac{Q_{1+\kappa} E[|\hat \chi - \mu|^{1+\kappa}]}{(\mu-u)^{1+\kappa} (\hat Z_{r-1})^{\kappa}}   \right] + a_{r-1} \\
&\leq Q u^{-\kappa(r-1)}  + a_{r-1},
\end{align*}
where $Q = Q_{1+\kappa} E[|\hat \chi - \mu|^{1+\kappa}] / (\mu-u)^{1+\kappa}$ and $E[ (\hat \chi)^{1+\kappa}] < \infty$ by Remark \ref{R.LimitMoments}. It follows from iterating the inequality derived above that
\begin{align*}
a_r &\leq Q \sum_{j= k}^{r-1} \frac{1}{u^{\kappa j}} + a_{k} \leq \frac{Q}{(u^\kappa-1) u^{\kappa (k-1)}} + P( W > 0, \, \hat Z_{k} < u^{k} )
\end{align*}
for all $r \geq k$. It remains to bound the last probability. 

Let $\{\mathcal{Z}_k: k \geq 0\}$ be a (non-delayed) branching process 
 having offspring distribution $f$, and let $\mathcal{W} = \lim_{k \to \infty} \mathcal{Z}_k/\mu^k$. It is well known (see \cite{Athreya2012}, pp. 52), that conditional on non-extinction,  $\mathcal{W}$ has an absolutely continuous distribution on $(0,\infty)$. Note also that for any $m \geq 1$ we have
$$W_{m+k} = \frac{\hat Z_{m+k}}{\nu \mu^{m+k-1}} = \frac{1}{\nu \mu^{m+k-1}} \sum_{{\bf i} \in \hat A_{k}} \mathcal{Z}_{m,{\bf i}},$$
where the $\{ \mathcal{Z}_{m,{\bf i}} \}$ are i.i.d.~copies of $\mathcal{Z}_m$ and $\hat A_k$ is the set of individuals in the $k$th generation of $\{\hat Z_k: k \geq 1\}$, and therefore, for any $k \geq 1$,
$$W_{m+k} - W_k = \frac{1}{\nu \mu^{k-1}} \sum_{{\bf i} \in \hat A_{k}} \left( \frac{\mathcal{Z}_{m,{\bf i}}}{\mu^m} -  1 \right).$$
Now define $\mathcal{W}_{\bf i} = \lim_{m \to \infty} \mathcal{Z}_{m,{\bf i}} / \mu^m$ to obtain that
\begin{equation} \label{eq:ConvergenceRate}
W - W_k = \frac{1}{\nu \mu^{k-1}} \sum_{{\bf i} \in \hat A_{k}} \left( \mathcal{W}_{\bf i} -  1 \right),
\end{equation}
where the $\{ \mathcal{W}_{\bf i}\}$ are i.i.d.~copies of $\mathcal{W}$, independent of the history of the tree up to generation $k$. It follows that for $x_k = 2 u^k/(\nu \mu^{k-1})$, 
\begin{align}
P\left( \hat Z_k < u^k, \, W > 0 \right) &\leq P\left(  \hat Z_k < u^k, \,W \geq x_k \right) + P\left( 0 <  W < x_k \right) \notag \\
&\leq P\left(\hat Z_k  < u^k, \, W - W_k \geq x_k - u^k/(\nu \mu^{k-1})  \right) + P\left( 0 <  W < x_k \right) \notag  \\
&= E\left[ 1(\hat Z_k < u^k) P\left( \left. \sum_{{\bf i} \in \hat A_k} (\mathcal{W}_{\bf i} -1) \geq u^k \right| \hat Z_k \right) \right] + P\left( 0 <  W < x_k \right) \notag .
\end{align}
Now note that $E[ \hat \chi^{1+\kappa} ] < \infty$ implies that $E[ \mathcal{W}^{1+\kappa}] < \infty$. Then, by Lemma \ref{L.Burkholder}, applied conditionally on $\hat Z_k$, we obtain
\begin{align*}
P\left( \left. \sum_{{\bf i} \in \hat A_k} (\mathcal{W}_{\bf i} -1) \geq u^k \right| \hat Z_k \right) &\leq   Q_{1+\kappa} E[|\mathcal{W} -1|^{1+\kappa}] \cdot \frac{ \hat Z_k}{u^{(1+\kappa)k}}. 
\end{align*}
Hence,
$$P\left( \hat Z_k < u^k, \, W > 0 \right) \leq \frac{Q_{1+\kappa} E[|\mathcal{W} -1|^{1+\kappa}]}{u^{\kappa k}} + P(0 < W < x_k). $$

Finally, to bound $P(0 < W < x_k)$, note that  $W$ admits the representation
$$W = \frac{1}{\nu} \sum_{i=1}^{\hat \chi_\emptyset} \mathcal{W}_i,$$
where the $\{\mathcal{W}_i\}$ are i.i.d.~copies of $\mathcal{W}$, independent of $\hat \chi_\emptyset$, with $\hat \chi_{\emptyset}$ distributed according to $g$. Note that  $W > 0$ implies that at least one of the $\mathcal{W}_i$ is strictly positive. Let $N(t)$ be the number of non-zero random variables among $\{ \mathcal{W}_1, \dots, \mathcal{W}_t\}$. It follows that if we let $\{V_i\}$ be i.i.d.~random variables having the same distribution as $\mathcal{W}$ given $\mathcal{W} > 0$, then 
\begin{align*}
P\left( 0 < W < x_k \right) &= P\left( \frac{1}{\nu} \sum_{i=1}^{N(\hat \chi_\emptyset)} V_i < x_k, \, N(\hat \chi_{\emptyset}) \geq 1 \right) \leq P\left( V_1 < \nu x_k \right) .
\end{align*}
Hence, if $w(t)$ denotes the density of $\mathcal{W}$ conditional on non-extinction, we have that  
$$P(V_1 < \nu x_k) = P\left( \left. \mathcal{W} < \nu x_k \right| \mathcal{W} > 0 \right) = \int_0^{\nu x_k} w(t) \, dt.$$
By Theorem 1 in \cite{SergeDubuc1971} (see also Theorem 4 in \cite{Biggins1993}), we have that if $\lambda = \sum_{i=1}^\infty f(i) i q^{i-1} > 0$, which under the assumptions of the lemma occurs whenever $q > 0$, then there exists a constant $C_0 < \infty$ such that
$$\int_0^{\nu x_k} w(t) \, dt \leq C_0 (\nu x_k)^{\alpha}$$
for $\alpha = -\log \lambda/\log \mu$; whereas if $f(0) + f(1) = 0$, then Theorem 3 in \cite{Biggins1993} gives that for every $a > 0$ there exists a constant $C_a < \infty$ such that
$$\int_0^{\nu x_k} w(t) \, dt \leq C_a (\nu x_k)^a.$$
We conclude that for $a^* = \kappa \log u/\log(\mu/u)$, 
\begin{align*}
P\left( \min_{r \geq k} \frac{\hat Z_r}{u^r} < 1, \, W > 0 \right) &\leq \frac{Q}{(u^\kappa -1) u^{\kappa(k-1)}} + \frac{Q_{1+\kappa} E[|\mathcal{W} -1|^{1+\kappa}]}{u^{\kappa k}} \\
&\hspace{5mm} + C_0 (\nu x_k)^{\alpha} 1(q > 0) + C_{a^*} (\nu x_k)^{a^*}  \\
&\leq Q_1 \left( u^{-\kappa k} + (u/\mu)^{\alpha k} 1(q > 0) \right) .
\end{align*}
\end{proof}


\subsection{Coupling with a branching process}  \label{SS.CouplingProofs}

In this section we prove Theorem \ref{thm:main_coupling}. As mentioned in Section \ref{S.Coupling}, the coupling we constructed is based on bounding the Kantorovich-Rubinstein distance between the distributions $H_{\bf i}^\pm$ and $F^\pm$, and the main difficulty lies in the fact that this distance grows as the number of explored stubs in the graph grows. The proof of the main theorem is based on four technical results, Lemmas~\ref{L.KRdistance}, \ref{L.Expectations}, \ref{L.BoundE} and Proposition~\ref{P.OnExtinction}, which we state and prove below. 

Throughout this section let
$$Y_k^\pm = \sum_{r=1}^k Z_r^\pm, \quad k \geq 1; \qquad Y_0^\pm = 0.$$

The first of the technical lemmas gives us an upper bound for the Kantorovich-Rubinstein distance conditionally on the history of the graph exploration process and its coupled tree up to the moment that stub ${\bf i}$ is about to be traversed. 

\begin{lemma} \label{L.KRdistance}
Let $\mathcal{G}_{\bf i}$ denote the sigma-algebra generated by the bi-degree sequence $({\bf D}_n^-, {\bf D}_n^+)$ and the graph exploration process up to the time that outbound (inbound) stub {\bf i} is about to be traversed. Then, provided $({\bf D}_n^-, {\bf D}_n^+)$ satisfies Assumption \ref{A.Wasserstein}, for all $n$ sufficiently large, and for $T_{\bf i}^\pm \leq (\nu/2) n$,  we have
$$\mathbb{E}_n \left[ \left. \left| \hat \chi_{\bf i}^\pm - \chi_{\bf i}^\pm \right| \right| \mathcal{G}_{\bf i} \right] \leq \mathcal{E}(T_{\bf i}^\pm),$$
where
\begin{equation} \label{eq:ErrorFunction}
\mathcal{E}(t) = \frac{4}{\nu n} \sum_{r=1}^n (1 - \mathcal{I}_r(t))D_r^+ D_r^- + \frac{4\mu t}{\nu n} + 3n^{-\varepsilon}.
\end{equation}
\end{lemma}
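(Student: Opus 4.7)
The plan is to leverage the fact that, for each ${\bf i}$, the pair $(\chi_{\bf i}^\pm, \hat\chi_{\bf i}^\pm) = ((H_{\bf i}^\pm)^{-1}(U_{\bf i}), (F^\pm)^{-1}(U_{\bf i}))$ realizes the optimal coupling in the Kantorovich-Rubinstein sense. Since $T_{\bf i}^\pm$ and the list of active nodes just before ${\bf i}$ is traversed are $\mathcal{G}_{\bf i}$-measurable, the distribution $h_{\bf i}^\pm$ is itself $\mathcal{G}_{\bf i}$-measurable, and given $\mathcal{G}_{\bf i}$ the only source of randomness in the pair is the fresh Uniform variable $U_{\bf i}$. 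Thus
$$
\mathbb{E}_n[|\chi_{\bf i}^\pm - \hat\chi_{\bf i}^\pm| \mid \mathcal{G}_{\bf i}] = d_1(H_{\bf i}^\pm, F^\pm) \leq d_1(H_{\bf i}^\pm, F_n^\pm) + n^{-\varepsilon},
$$
by the triangle inequality combined with Assumption \ref{A.Wasserstein}. It therefore suffices to bound $d_1(H_{\bf i}^\pm, F_n^\pm)$ by the first two summands of $\mathcal{E}(T_{\bf i}^\pm)$ plus $2 n^{-\varepsilon}$.

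I would treat the $+$ case explicitly, the $-$ case being symmetric. The next step is the tail representation $d_1(H_{\bf i}^+, F_n^+) = \sum_{k \geq 0}|(1-H_{\bf i}^+(k)) - (1-F_n^+(k))|$. The $V_{\bf i}^-$ correction in $h_{\bf i}^+$ contributes only to the atom at $0$ and hence drops out of the tails: setting $A_k = \sum_r \mathbf{1}(D_r^+ > k)D_r^-$ and $A_k' = \sum_r \mathbf{1}(D_r^+ > k)D_r^- \mathcal{I}_r(T_{\bf i}^+)$, one has $1-H_{\bf i}^+(k) = A_k'/(L_n - T_{\bf i}^+)$ and $1-F_n^+(k) = A_k/L_n$. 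Subtracting, bounding term by term, and summing in $k$ via the identity $\sum_{k \geq 0}\mathbf{1}(D_r^+ > k) = D_r^+$ (so that $\sum_k A_k = \sum_r D_r^+ D_r^- = L_n \mu_n$) yields
$$
d_1(H_{\bf i}^+, F_n^+) \leq \frac{\mu_n T_{\bf i}^+}{L_n - T_{\bf i}^+} + \frac{1}{L_n - T_{\bf i}^+}\sum_{r=1}^n (1-\mathcal{I}_r(T_{\bf i}^+)) D_r^+ D_r^-.
$$

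Finally, I would absorb $L_n$ into $\nu n$ and $\mu_n$ into $\mu$. Since $|\nu_n - \nu| \leq n^{-\varepsilon}$ and $|\mu_n - \mu| \leq n^{-\varepsilon}$ under Assumption \ref{A.Wasserstein}, and $T_{\bf i}^+ \leq (\nu/2) n$ by hypothesis, for all $n$ sufficiently large we have $L_n - T_{\bf i}^+ \geq (\nu/4) n$, hence $1/(L_n - T_{\bf i}^+) \leq 4/(\nu n)$. Writing $\mu_n = \mu + (\mu_n - \mu)$ in the first term and using $T_{\bf i}^+/n \leq \nu/2$ converts the excess into $O(n^{-\varepsilon})$, which together with the $n^{-\varepsilon}$ from the triangle-inequality step recovers the factor $3 n^{-\varepsilon}$ in $\mathcal{E}(T_{\bf i}^\pm)$. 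The manipulation is essentially algebraic; the only delicate point is verifying that the asymmetric $V_{\bf i}^\mp$ correction in $h_{\bf i}^\pm(0)$ indeed does not enter the tail functionals controlling $d_1$, so that the computation reduces to the clean algebra above.
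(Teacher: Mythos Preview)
Your proposal is correct and follows essentially the same route as the paper: identify the conditional expectation with $d_1(H_{\bf i}^\pm,F^\pm)$, split via the triangle inequality through $F_n^\pm$, compute $d_1(H_{\bf i}^\pm,F_n^\pm)$ from the tail representation (where the $V_{\bf i}^\mp$ correction at $0$ drops out), and then replace $L_n,\mu_n$ by $\nu n,\mu$ using Assumption~\ref{A.Wasserstein} and $T_{\bf i}^\pm\le(\nu/2)n$. The only item you do not mention explicitly is the degenerate case ${\bf i}=\emptyset$, where the coupling uses $G_n^\pm,G^\pm$ rather than $H_{\bf i}^\pm,F^\pm$; there $T_\emptyset^\pm=0$ and $d_1(G_n^\pm,G^\pm)\le n^{-\varepsilon}\le \mathcal{E}(0)$, so it is immediate.
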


\begin{proof}
We first point out that for ${\bf i} = \emptyset$ the result holds trivially by Assumption \ref{A.Wasserstein}, since
$$\mathbb{E}_n \left[ \left| \hat \chi_{\emptyset}^\pm - \chi_\emptyset^\pm \right| \right] = d_1(G_n^\pm, G^\pm) \leq n^{-\varepsilon}.$$

For ${\bf i} \neq \emptyset$ we have
$$\mathbb{E}_n \left[ \left. \left| \hat \chi_{\bf i}^\pm - \chi_{\bf i}^\pm \right| \right| \mathcal{G}_{\bf i} \right] = d_1(H_{\bf i}^\pm, F^\pm) \leq d_1(H_{\bf i}^\pm, F_n^\pm) + d_1(F_n^\pm, F^\pm).$$
Since by Assumption \ref{A.Wasserstein} we have that the second distance is smaller or equal than $n^{-\varepsilon}$, we only need to analyze the first one, which we do separately for the $+$ and $-$ cases. To this end write
\begin{align*}
d_1(H_{\bf i}^+, F_n^+) &= \sum_{k=0}^\infty \left| \sum_{j=0}^k (h_{\bf i}^+ (j) - f_n^+(j)) \right| = \sum_{k=0}^\infty \left| \sum_{j=k+1}^\infty (f_n^+(j) - h_{\bf i}^+ (j) ) \right| \\
&= \sum_{k=0}^\infty \left|   \sum_{j=k+1}^\infty \sum_{r=1}^n 1(D_r^+ = j) D_r^-  \left(  \frac{\mathcal{I}_r(T_{\bf i}^+)}{L_n - T_{\bf i}^+} -  \frac{1}{L_n} \right) \right| \\
&\leq \sum_{k=0}^\infty  \sum_{r=1}^n  \left|  \frac{\mathcal{I}_r(T_{\bf i}^+)}{L_n - T_{\bf i}^+} -  \frac{1}{L_n} \right|   D_r^- 1(D_r^+ > k)   \\
&= \sum_{r=1}^n  \left|  \frac{(\mathcal{I}_r(T_{\bf i}^+) - 1) L_n + T_{\bf i}^+}{(L_n - T_{\bf i}^+) L_n} \right|   D_r^+ D_r^- \\
&\leq \frac{1}{L_n - T_{\bf i}^+} \sum_{r=1}^n (1 - \mathcal{I}_r(T_{\bf i}^+)) D_r^+ D_r^- + \frac{T_{\bf i}^+}{L_n - T_{\bf i}^+} \cdot \mu_n ,
\end{align*}
where $\mu_n = L_n^{-1} \sum_{r=1}^n D_r^+ D_r^-$ is the common mean of $F_n^+$ and $F_n^-$. Symmetrically,
$$d_1(H_{\bf i}^-, F_n^-) \leq  \frac{1}{L_n - T_{\bf i}^-} \sum_{r=1}^n (1 - \mathcal{I}_r(T_{\bf i}^-)) D_r^+ D_r^- + \frac{T_{\bf i}^-}{L_n - T_{\bf i}^-} \cdot \mu_n.$$

Now note that 
$$\mu_n \leq \mu + d_1(F_n^\pm, F^\pm),$$
and if $\nu_n$ denotes the common mean of $G_n^+$ and $G_n^-$, then
$$\frac{L_n}{n} = \nu_n \geq \nu - d_1(G_n^\pm, G^\pm),$$
which in turn implies that
$$(L_n - T_{\bf i}^\pm)^{-1} \leq (\nu n - T_{\bf i}^\pm - n d_1(G_n^\pm, G^\pm))^{-1}.$$

We conclude that, under Assumption \ref{A.Wasserstein} and for $T_{\bf i}^\pm \leq (\nu/2)n$, 
\begin{align*}
d_1(H_{\bf i}^\pm, F^\pm) &\leq \frac{1}{(\nu/2) n - n^{1-\varepsilon} } \sum_{r=1}^n (1 - \mathcal{I}_r(T_{\bf i}^\pm)) D_r^+ D_r^-  + \frac{T_{\bf i}^\pm}{(\nu/2) n - n^{1-\varepsilon}} \cdot (\mu + n^{-\varepsilon}) + n^{-\varepsilon} \\
&\leq \frac{4}{\nu n } \sum_{r=1}^n (1 - \mathcal{I}_r(T_{\bf i}^\pm)) D_r^+ D_r^- + \frac{4\mu T_{\bf i}^\pm}{\nu n} + 3 n^{-\varepsilon}
\end{align*}
for all $n \geq (4/\nu)^{1/\varepsilon}$. 
\end{proof}

The second preliminary result provides an estimate for the expected value of the bound obtained in the previous lemma on the set where $\{ \hat Z_k^\pm: k \geq 0\}$ behaves typically, i.e., without exhibiting large deviations from its mean.

\begin{lemma} \label{L.BoundE}
Define $\mathcal{E}(t)$ according to \eqref{eq:ErrorFunction}, and for any fixed $0 < \eta < 1$ and all $m \geq 1$ define the event
\begin{equation} \label{eq:DefEk}
E_m = \bigcap_{r=1}^m \left\{ \hat Z_r^\pm /\mu^r \leq n^{\eta} \right\}.
\end{equation}
Then, provided $({\bf D}_n^-, {\bf D}_n^+)$ satisfies Assumption~\ref{A.Wasserstein}, there exists a constant $Q_2 < \infty$ such that for any $0 \leq t \leq \nu n/2$ and any $k \geq 1$,
$$\mathbb{E}_n[ \mathcal{E}(t) ] \leq Q_2 \left( \frac{ t^\kappa }{n^\kappa} + n^{-\varepsilon} \right) \quad \text{and} \quad \mathbb{E}_n \left[ 1(E_k) \hat Z_k^\pm \mathcal{E}(t) \right] \leq Q_2 \mu^k \left( \frac{t^\kappa}{n^{\kappa(1-\eta)}} + n^{-\varepsilon} \right)$$
where $0 < \varepsilon < 1$ and $0 < \kappa \leq 1$ are those from Assumption \ref{A.Wasserstein}. 
\end{lemma}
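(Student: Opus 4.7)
The plan is to control $\mathbb{E}_n[1-\mathcal{I}_r(t)]$, the probability that node $r$ has been activated by time $t$, and then combine this with the moment condition in Assumption~\ref{A.Wasserstein}(2).

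For the first bound, node $r$ is active at time $t$ in the outbound exploration if and only if it is the initial node (probability $1/n$) or one of its $D_r^-$ inbound stubs has been chosen in one of the first $t$ pairings. Since each pairing picks uniformly among the unpaired inbound stubs, a union bound gives
\[
\mathbb{E}_n[1-\mathcal{I}_r(t)] \leq \frac{1}{n} + \frac{D_r^- t}{L_n-t+1}.
\]
The probability is also bounded by $1$, so the elementary inequality $\min(1,x)\leq x^\kappa$ (valid for $x\geq 0$, $\kappa\in(0,1]$) together with $L_n-t+1\geq \nu n/4$ for $t\leq \nu n/2$ and $n$ large (which follows from $d_1(G_n^\pm,G^\pm)\leq n^{-\varepsilon}$) refines this to $1/n + (4D_r^- t/(\nu n))^\kappa$. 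Multiplying by $D_r^+ D_r^-$ and summing over $r$, the constant piece contributes $n^{-1}\sum_r D_r^+D_r^-=\mu_n\nu_n=O(1)$, while the power piece contributes $(4t/(\nu n))^\kappa\sum_r (D_r^-)^\kappa D_r^+D_r^- \leq K_\kappa n(4t/(\nu n))^\kappa$ by Assumption~\ref{A.Wasserstein}(2). Dividing by $\nu n$ and absorbing the deterministic terms $4\mu t/(\nu n)+3n^{-\varepsilon}$ using $t/n\leq (\nu/2)^{1-\kappa}(t/n)^\kappa$ for $t\leq \nu n/2$ yields the first inequality with some $Q_2=Q_2(\nu,\mu,\kappa,K_\kappa)$; the inbound case is symmetric.

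For the second bound, I would decompose $\mathcal{E}(t)=A(t)+B(t)+C$, where $A(t)=(4/(\nu n))\sum_r (1-\mathcal{I}_r(t))D_r^+ D_r^-$ is the only $\mathscr{F}_n$-random part, and $B(t)=4\mu t/(\nu n)$, $C=3n^{-\varepsilon}$. The deterministic pieces factor out of the expectation, and $\mathbb{E}_n[1(E_k)\hat Z_k^\pm(B(t)+C)]\leq (B(t)+C)\mathbb{E}_n[\hat Z_k^\pm]=\nu\mu^{k-1}(B(t)+C)$. Since for $t\leq \nu n/2$ we have $t/n\leq (\nu/2)^{1-\kappa}(t/n)^\kappa\leq Ct^\kappa/n^{\kappa(1-\eta)}$ and $n^{-\varepsilon}$ is already in the target, the deterministic part is under control. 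For the random piece, $E_k$ enforces $1(E_k)\hat Z_k^\pm\leq \mu^k n^\eta$, so applying the first bound gives $\mathbb{E}_n[1(E_k)\hat Z_k^\pm A(t)]\leq \mu^k n^\eta\mathbb{E}_n[A(t)]\leq Q\mu^k(t^\kappa n^\eta/n^\kappa+n^{\eta-\varepsilon})$.

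The main obstacle is the gap between the crude exponent $n^{\kappa-\eta}$ I obtain and the tighter $n^{\kappa(1-\eta)}=n^{\kappa-\kappa\eta}$ stated in the lemma, which coincide only at $\kappa=1$. To close this gap, I expect a H\"older-type interpolation: write $1(E_k)\hat Z_k^\pm\leq (\mu^k n^\eta)^{1-\kappa}(\hat Z_k^\pm)^\kappa$, then for each $r$ apply H\"older with conjugate exponents $(1/\kappa,1/(1-\kappa))$ to $(\hat Z_k^\pm)^\kappa$ and $1-\mathcal{I}_r(t)$, using Jensen's inequality to obtain $\mathbb{E}_n[(\hat Z_k^\pm)^\kappa]\leq(\nu\mu^{k-1})^\kappa$ and the same union bound on $\mathbb{E}_n[1-\mathcal{I}_r(t)]$ raised to the $(1-\kappa)$-th power; summing with the weight $D_r^+D_r^-$ and re-invoking Assumption~\ref{A.Wasserstein}(2) should recover the denominator $n^{\kappa(1-\eta)}$ while replacing the surplus factor $n^{\eta-\varepsilon}$ by $n^{-\varepsilon}$.
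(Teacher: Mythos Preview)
Your treatment of the first inequality is essentially the paper's argument: bound $\mathbb{E}_n[1-\mathcal{I}_r(t)]$ by $1/n + tX_r/(L_n-t)$ (with $X_r = D_r^\mp$), raise to the power $\kappa$ via $\min(1,x)\le x^\kappa$, and sum against Assumption~\ref{A.Wasserstein}(b). That part is fine.

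The gap is in your H\"older interpolation for the second inequality. You split $1(E_k)\hat Z_k^\pm \le (\mu^k n^\eta)^{1-\kappa}(\hat Z_k^\pm)^\kappa$ and then apply H\"older with exponent $1/\kappa$ on $(\hat Z_k^\pm)^\kappa$ and $1/(1-\kappa)$ on $1-\mathcal{I}_r(t)$. This produces $(\mathbb{E}_n[1-\mathcal{I}_r(t)])^{1-\kappa}$, not $(\mathbb{E}_n[1-\mathcal{I}_r(t)])^{\kappa}$. Two things go wrong: (i) the resulting power of $t/n$ is $1-\kappa$, so you end up with $t^{1-\kappa}/n^{(1-\kappa)(1-\eta)}$ rather than the claimed $t^{\kappa}/n^{\kappa(1-\eta)}$; and (ii) summing over $r$ now requires $\sum_r (D_r^\mp)^{1-\kappa} D_r^+ D_r^- = O(n)$, which Assumption~\ref{A.Wasserstein}(b) does \emph{not} supply (it only controls the $\kappa$-moment, and for $\kappa<1/2$ this is strictly weaker).

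The paper reverses the exponents: it applies H\"older directly to the product $1(E_k)\hat Z_k^\pm \cdot (1-\mathcal{I}_r(t))$ with $p=1/(1-\kappa)$ on $1(E_k)\hat Z_k^\pm$ and $q=1/\kappa$ on the indicator. Since the indicator satisfies $(1-\mathcal{I}_r(t))^q = 1-\mathcal{I}_r(t)$, this yields the correct factor $(\mathbb{E}_n[1-\mathcal{I}_r(t)])^{\kappa}$, while the $p$-th moment is handled via $1(E_k)(\hat Z_k^\pm)^p \le (\mu^k n^\eta)^{p-1}\hat Z_k^\pm$, giving
\[
\bigl(\mathbb{E}_n[1(E_k)(\hat Z_k^\pm)^p]\bigr)^{1/p} \le (\mu^k n^\eta)\Bigl(\tfrac{\nu}{\mu n^\eta}\Bigr)^{1/p} = (\mu/\nu)^\kappa\, E[\hat Z_k^\pm]\, n^{\kappa\eta}.
\]
This is exactly what recovers the denominator $n^{\kappa(1-\eta)}$ and allows the sum over $r$ to be controlled by Assumption~\ref{A.Wasserstein}(b). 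Equivalently, in your two-step formulation you should have split $1(E_k)\hat Z_k^\pm \le (\mu^k n^\eta)^{\kappa}(\hat Z_k^\pm)^{1-\kappa}$ and then placed the $1/\kappa$ exponent on the indicator.
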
 

\begin{proof}
We start by proving the bound for $\mathbb{E}_n \left[ \mathcal{E}(t) \right]$. Let $X_r$ denote either $D_r^+$ or $D_r^-$ depending on whether we are exploring outbound stubs or inbound stubs, respectively. Recall that $\mathcal{I}_r(t)$ is the indicator of node $r$ being {\em inactive} at time $t$ in the graph exploration process. Next, note that $\mathcal{I}_r(0) = 1$, $\mathbb{E}_n[\mathcal{I}_r(1)] = 1- 1/n$, and for any $2 \leq t < L_n$,
\begin{align*}
\mathbb{E}_n\left[  \mathcal{I}_r(t) \right] &= \left(1 - \frac{1}{n} \right) \prod_{s=1}^{t-1} \left(1 - \frac{X_r}{L_n - s} \right) \geq \left(1 - \frac{1}{n} \right) \left( 1 - \frac{X_r}{L_n-t} \right)^{t-1} ,
\end{align*}
from where it follows that $\mathbb{E}_n[1- \mathcal{I}_r(0)] = 0$, $\mathbb{E}_n[1- \mathcal{I}_r(1)] = 1/n$, and for $2 \leq t < L_n$, 
\begin{align*}
\mathbb{E}_n\left[  1 - \mathcal{I}_r(t) \right] &\leq 1 - \left(1 - \frac{1}{n} \right) \left( 1- \frac{X_r}{L_n - t} \right)^{t-1} \\
&= \frac{1}{n} +  \left(1 - \frac{1}{n} \right) \frac{X_r}{L_n - t} \sum_{s=0}^{t-2} \left( 1- \frac{X_r}{L_n-t } \right)^s \\
&\leq \frac{1}{n} + \frac{(t-1) X_r}{L_n-t}.
\end{align*}

Now let $0 < \kappa \leq 1$ be the one from Assumption \ref{A.Wasserstein} and note that since $\mathbb{E}_n\left[  1 - \mathcal{I}_r(t) \right] \leq 1$, then,
\begin{equation*}
\mathbb{E}_n\left[  1 - \mathcal{I}_r(t) \right] \leq \left( \mathbb{E}_n\left[  1 - \mathcal{I}_r(t) \right] \right)^\kappa \leq \left( \frac{1}{n} \right)^\kappa + \left( \frac{ (t-1) X_r}{L_n-t } \right)^\kappa,
\end{equation*}
where we used the inequality $\left( \sum_i y_i \right)^\beta \leq \sum_{i} y_i^\beta$ for $y_i \geq 0$ and $0 < \beta \leq 1$.
 It follows that under Assumption \ref{A.Wasserstein} and for any $2 \leq t < \nu n - n^{1-\varepsilon}$, 
 \begin{align}
\frac{1}{n} \sum_{r=1}^n \left( \mathbb{E}_n\left[  1 - \mathcal{I}_r(t) \right] \right)^\kappa D_r^+ D_r^- &\leq \frac{1}{n^{1+\kappa}} \sum_{r=1}^n D_r^+ D_r^- + \frac{(t-1)^\kappa}{n (L_n-t)^\kappa} \sum_{r=1}^n ((D_r^+)^\kappa + (D_r^-)^\kappa) D_r^+ D_r^- \notag \\
 &\leq \frac{\nu_n \mu_n}{n^\kappa} + \frac{K_\kappa t^\kappa}{(L_n - t)^\kappa} \notag \\
 &\leq \frac{(\nu + n^{-\varepsilon}) (\mu + n^{-\varepsilon})}{n^\kappa} + \frac{K_\kappa t^\kappa}{(\nu n - n^{1-\varepsilon} - t)^\kappa}. \label{eq:HardBound}
 \end{align}
 
 It follows that $\mathbb{E}_n[\mathcal{E}(0)] \leq 3n^{-\varepsilon}$, $\mathbb{E}_n[ \mathcal{E}(1)] \leq 4\mu_n/(\nu n) + 3n^{-\varepsilon}$, and for any $2 \leq t \leq \nu n/2$ we have
 \begin{align*}
 \mathbb{E}_n[\mathcal{E}(t)] &\leq  \frac{4}{\nu n} \sum_{r=1}^n \mathbb{E}_n\left[  1 - \mathcal{I}_r(t) \right] D_r^+ D_r^- + \frac{4\mu t}{\nu n} + 3 n^{-\varepsilon} \\
 &\leq \frac{4\mu}{n^\kappa}  (1+O(n^{-\varepsilon}))  + \frac{K_\kappa t^\kappa}{(\nu n/2)^\kappa} (1+ O(n^{-\varepsilon})) + \frac{4\mu t}{\nu n} + 3 n^{-\varepsilon} \\
 &\leq Q_0 \left( \frac{t^\kappa}{n^\kappa} + n^{-\varepsilon} \right),
 \end{align*}
 for some constant $Q_0 < \infty$. 

Next, to compute a bound for $\mathbb{E}_n \left[ 1(E_k) \hat Z_k^\pm \mathcal{E}(t) \right]$ let $q = 1/\kappa$ and $p = q/(q-1)$, with $p = \infty$ if $q = 1$, and use H\"older's inequality to obtain, for $1 \leq t \leq \nu n/2$, 
\begin{align*}
\mathbb{E}_n \left[ 1( E_k) \hat Z_k^\pm \mathcal{E}(t) \right] &= \frac{4}{\nu n} \sum_{r=1}^n \mathbb{E}_n \left[ 1(E_k) \hat Z_k^\pm (1- \mathcal{I}_r(t)) \right] D_r^+ D_r^- \\
&\hspace{5mm} + \frac{4\mu t}{\nu n}  E \left[ 1(E_k) \hat Z_k^\pm \right] + 3 n^{-\varepsilon} E \left[ 1(E_k) \hat Z_k^\pm \right] \\
&\leq \frac{4 }{\nu n} \sum_{r=1}^n  \left(  E \left[ 1(E_k) \left( \hat Z_k^\pm \right)^p \right]  \right)^{1/p} \left(  \mathbb{E}_n \left[  1- \mathcal{I}_r(t) \right]  \right)^{1/q}  D_r^+ D_r^-\\
&\hspace{5mm}  + \frac{4\mu t}{\nu n } E \left[ \hat Z_k^\pm \right]  + 3 n^{-\varepsilon} E \left[ \hat Z_k^\pm \right] .
\end{align*}
Now note that 
$$\left(  E \left[ 1(E_k) \left( \hat Z_k^\pm \right)^p \right]  \right)^{1/p} \leq \left(  (\mu^{k} n^{\eta})^{p-1} E \left[  \hat Z_k^\pm  \right]  \right)^{1/p} =  \mu^k n^\eta   \left( \frac{\nu}{\mu n^\eta} \right)^{1/p}  = \left(\frac{\mu}{\nu}\right)^{\kappa} E \left[  \hat Z_k^\pm  \right] n^{\kappa\eta} .$$
Combining this inequality with \eqref{eq:HardBound} gives, for $1 \leq t \leq \nu n/2$,
\begin{align*}
\mathbb{E}_n \left[ 1( E_k) \hat Z_k^\pm \mathcal{E}(t) \right] &\leq E \left[ \hat Z_k^\pm \right]  \left( \frac{\mu^\kappa n^{\kappa\eta}}{\nu^\kappa} \cdot \frac{4 }{\nu n} \sum_{r=1}^n (\mathbb{E}_n[1- \mathcal{I}_r(t)])^\kappa D_r^+ D_r^- + \frac{4\mu t}{\nu n} + 3 n^{-\varepsilon} \right) \\
&\leq E \left[ \hat Z_k^\pm \right] \left(  \frac{\mu^\kappa n^{\kappa\eta}}{\nu^\kappa}  \left\{ \frac{4\mu}{n^\kappa}  (1+O(n^{-\varepsilon}))  + \frac{K_\kappa t^\kappa}{(\nu n/2)^\kappa} (1+ O(n^{-\varepsilon})) \right\} + \frac{4\mu t}{\nu n} + 3 n^{-\varepsilon}   \right) \\
&\leq Q_0' \mu^k \left( \frac{t^\kappa}{n^{\kappa(1-\eta)}} + n^{-\varepsilon } \right) ,
\end{align*}
for some constant $Q_0' < \infty$. Noting that $\mathbb{E}_n\left[ 1(E_k) \hat Z_k^\pm \mathcal{E}(0) \right] \leq E[\hat Z_k^\pm]  3 n^{-\varepsilon} = O( \mu^k n^{-\varepsilon}) $  completes the proof. 
\end{proof}

The third technical lemma provides an estimate for the expected number of stubs that are discovered during step $k+1$ of the graph exploration process, on the set where the coupling holds uniformly well up to step $k$. This bound is the key component that will enable the induction step in the proof of Theorem~\ref{thm:main_coupling}.

\begin{lemma} \label{L.Expectations}
Let $E_k$ be defined according to \eqref{eq:DefEk}. Fix $0 < \delta < 1$, $0 < \gamma < \min\{ \delta \kappa, \varepsilon\}$, and define
\begin{equation} \label{eq:DefCk}
C_m = \bigcap_{r=1}^m \left\{ \left| \hat A_r^\pm \cap (A_r^\pm)^c \right| \leq \hat Z_r^\pm n^{-\gamma}, \, \left| A_r^\pm \cap (\hat A_r^\pm)^c \right| \leq \hat Z_r^\pm n^{-\gamma} \right\}.
\end{equation}
Then, provided $({\bf D}_n^-, {\bf D}_n^+)$ satisfies Assumption \ref{A.Wasserstein}, there exists a constant $Q_3 < \infty$ such that for all $1 \leq k \leq (1-\delta) \log n/\log \mu$, 
$$\mathbb{E}_n \left[ 1(C_{k} \cap E_{k})  \left( \left|\hat A_{k+1}^\pm \cap (A_{k+1}^\pm)^c \right| + \left| A_{k+1}^\pm \cap (\hat A_{k+1}^\pm)^c \right| \right) \right] \leq  Q_3 \mu^k \left( \mu^{\kappa k} n^{-\kappa(1-2\eta)} + kn^{-\varepsilon} \right) ,$$
where $0 < \varepsilon < 1$ and $0 < \kappa \leq 1$ are those from Assumption \ref{A.Wasserstein}.
\end{lemma}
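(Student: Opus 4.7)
My plan is to prove this inductively on $k$, starting from a decomposition of the symmetric difference $\Delta_{k+1} := |\hat A^\pm_{k+1} \triangle A^\pm_{k+1}|$ according to the status of its generation-$k$ parents. Every child at generation $k+1$ has a parent ${\bf i}$ at generation $k$ with label ${\bf i}j$, and three cases arise: (i) if ${\bf i} \in \hat A^\pm_k \cap A^\pm_k$, the labels ${\bf i}j$ coincide in both processes for $j \leq \min(\hat \chi^\pm_{\bf i}, \chi^\pm_{\bf i})$ and contribute $|\hat \chi^\pm_{\bf i} - \chi^\pm_{\bf i}|$ to $\Delta_{k+1}$; (ii) if ${\bf i} \in \hat A^\pm_k \setminus A^\pm_k$, no label ${\bf i}j$ ever enters $A^\pm_{k+1}$, contributing $\hat \chi^\pm_{\bf i}$; (iii) symmetrically, ${\bf i} \in A^\pm_k \setminus \hat A^\pm_k$ contributes $\chi^\pm_{\bf i}$.

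For the matched-parent piece, I would condition on $\mathcal{G}_{\bf i}$ and apply Lemma~\ref{L.KRdistance} to get $\mathbb{E}_n[|\hat\chi^\pm_{\bf i} - \chi^\pm_{\bf i}| \mid \mathcal{G}_{\bf i}] \leq \mathcal{E}(T^\pm_{\bf i}) \leq \mathcal{E}(Y^\pm_k)$, using the monotonicity of $\mathcal{E}$ and the fact that $T^\pm_{\bf i} \leq Y^\pm_k$ for any stub explored in step $k+1$. On $C_k \cap E_k$ one has $Y^\pm_k \leq (1 + n^{-\gamma}) \sum_{r=1}^k \hat Z^\pm_r \leq C\mu^k n^\eta$, which for $\eta < \delta$ stays below $\nu n/2$ for all large $n$; the second bound of Lemma~\ref{L.BoundE} with $t = C\mu^k n^\eta$ then controls this piece by $Q_2' \mu^k(\mu^{\kappa k} n^{-\kappa(1-2\eta)} + n^{-\varepsilon})$.

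For the mismatched-parent pieces, I would use that the events $\{{\bf i} \in \hat A^\pm_k\}$ and $\{{\bf i} \in A^\pm_k\}$ are determined at the end of step~$k$, while $\hat\chi^\pm_{\bf i} = (F^\pm)^{-1}(U_{\bf i})$ is independent of that history with mean $\mu$, and the conditional mean of $\chi^\pm_{\bf i}$ given $\mathcal{G}_{\bf i}$ equals $(L_n - T^\pm_{\bf i})^{-1}\sum_r D_r^\mp D_r^\pm \mathcal{I}_r(T^\pm_{\bf i}) \leq \mu_n/(1 - T^\pm_{\bf i}/L_n)$. On $C_k \cap E_k$ the ratio $T^\pm_{\bf i}/L_n = O(n^{-(\delta-\eta)})$ is small, so this conditional mean is $\mu(1 + O(n^{-\rho}))$ for $\rho = \min(\varepsilon, \delta - \eta) > 0$. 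Summing over parents, pieces (ii) and (iii) together contribute in expectation at most $\mu(1 + O(n^{-\rho})) \cdot a_k$, where $a_k := \mathbb{E}_n[\Delta_k \cdot 1(C_{k-1} \cap E_{k-1})]$ is the previous step's bound, using $C_k \cap E_k \subseteq C_{k-1} \cap E_{k-1}$.

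Setting $a_{k+1} := \mathbb{E}_n[\Delta_{k+1} 1(C_k \cap E_k)]$, the two estimates combine to yield the recursion $a_{k+1} \leq Q_2' \mu^k(\mu^{\kappa k} n^{-\kappa(1-2\eta)} + n^{-\varepsilon}) + \mu(1 + O(n^{-\rho})) a_k$; dividing by $\mu^{k+1}$ linearizes it, and iterating back to the base case $a_1 \leq d_1(G_n^\pm, G^\pm) \leq n^{-\varepsilon}$ gives $a_{k+1} \leq Q_3 \mu^k(\mu^{\kappa k} n^{-\kappa(1-2\eta)} + kn^{-\varepsilon})$, with the factor $kn^{-\varepsilon}$ arising from summing $k$ equal-size contributions and $\mu^{\kappa k}$ from a geometric series. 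The main obstacle is ensuring that the multiplier $(1+O(n^{-\rho}))$ does not compound into a constant strictly greater than one over the $O(\log n)$ iterations; this is exactly where the combined restrictions $k \leq (1-\delta)\log_\mu n$ and $\eta < \delta$ are essential, as they force $T^\pm_{\bf i}/L_n$ to vanish polynomially and keep the iterated product bounded. A secondary subtlety is tracking which $\sigma$-algebra ($\mathcal{G}_{\bf i}$ versus the end-of-step-$k$ filtration) renders each quantity measurable, so that the independence statements behind the $\mu$ and $\mu(1+o(1))$ conditional means are genuinely available.
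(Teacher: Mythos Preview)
Your proposal is correct and follows essentially the same approach as the paper: decompose $\Delta_{k+1}$ by parent status, bound the matched-parent contribution via Lemma~\ref{L.KRdistance} and the second estimate of Lemma~\ref{L.BoundE} with $t = O(\mu^k n^\eta)$, bound the mismatched-parent contribution by $\mu$ times the previous generation's symmetric difference, and iterate the resulting recursion from $u_1 \leq n^{-\varepsilon}$. The only tactical difference is in piece~(iii): the paper writes $\chi_{\bf i}^\pm \leq (\chi_{\bf i}^\pm - \hat\chi_{\bf i}^\pm)^+ + \hat\chi_{\bf i}^\pm$ for ${\bf i} \in A_k^\pm \setminus \hat A_k^\pm$, absorbing the positive part into the $\mathcal{E}$-sum over all of $A_k^\pm$ and using that $\hat\chi_{\bf i}^\pm$ has exact conditional mean $\mu$, which yields the clean recursion $u_{k+1} \leq r_k + \mu u_k$; you instead bound the conditional mean of $\chi_{\bf i}^\pm$ directly by $\mu(1+O(n^{-\rho}))$ and then argue that the compounded factor $(1+O(n^{-\rho}))^k$ stays bounded over $k = O(\log n)$ steps. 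Both routes are valid and lead to the same bound; the paper's device simply avoids the compounding issue you flagged as the main obstacle.
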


\begin{proof}
Let $\mathcal{F}_{m}$ denote the sigma-algebra generated by the bi-degree sequence and the history of the exploration process up until step $m-1$ is completed; note that this includes the value of $Z_m^\pm$. Define
$$u_{k+1} = \mathbb{E}_n \left[ 1(C_{k} \cap E_{k})  \left( \left|\hat A_{k+1}^\pm \cap (A_{k+1}^\pm)^c \right| + \left| A_{k+1}^\pm \cap (\hat A_{k+1}^\pm)^c \right| \right) \right]$$
and $\mathcal{E}(t)$ according to \eqref{eq:ErrorFunction}, then condition on $\mathcal{F}_{k}$ to obtain that
\begin{align}
u_{k+1} &= \mathbb{E}_n \left[ 1(C_{k} \cap E_{k}) E\left[ \left. \sum_{{\bf i} \in \hat A_{k}^\pm \cap A_{k}^\pm} \left( \hat \chi_{\bf i}^\pm - \chi_{\bf i}^\pm \right)^+ + \sum_{{\bf i} \in \hat A_{k}^\pm \cap (A_{k}^\pm )^c} \hat \chi_{\bf i}^\pm \right| \mathcal{F}_{k} \right] \right] \label{eq:ukFirst} \\
&\hspace{5mm} + \mathbb{E}_n \left[ 1(C_{k} \cap E_{k}) E\left[ \left. \sum_{{\bf i} \in A_{k}^\pm \cap \hat A_{k}^\pm} \left(  \chi_{\bf i}^\pm - \hat \chi_{\bf i}^\pm \right)^+ + \sum_{{\bf i} \in A_{k}^\pm \cap (\hat A_{k}^\pm )^c} \chi_{\bf i}^\pm \right| \mathcal{F}_{k} \right] \right] . \label{eq:ukSecond}
\end{align}
To analyze the conditional expectations we first note that for ${\bf i} \in \hat A_{k}^\pm \cap A_{k}^\pm$ we must have that $T_{\bf i}^\pm \leq Y_{k}^\pm$. Moreover, on the event $C_k \cap E_k$ we have that 
$$Y_k^\pm = \sum_{i=1}^k Z_i^\pm \leq (1+n^{-\gamma}) \sum_{i=1}^k \hat Z_i^\pm \leq 2 n^\eta \sum_{i=1}^k \mu^i \leq 2 \mu^{k+1} n^\eta/(\mu-1) \triangleq y_k,$$
which for the range of values of $k$ in the lemma satisfies $y_k = o(n)$ as $n \to \infty$. It follows by Lemma~\ref{L.KRdistance} and the tower property that on the event $C_k \cap E_k$, the conditional expectation in \eqref{eq:ukFirst} is bounded from above by
\begin{align*}
\sum_{{\bf i} \in \hat A_{k}^\pm \cap A_{k}^\pm} \mathbb{E}_n \left[ \left. \mathcal{E}(T_{\bf i}^\pm) \right| \mathcal{F}_{k} \right] + \left| \hat A_{k}^\pm \cap (A_{k}^\pm )^c \right| \mu &\leq \hat Z_{k}^\pm \mathbb{E}_n \left[ \left. \mathcal{E}(Y_{k}^\pm) \right| \mathcal{F}_{k} \right] + \left| \hat A_{k}^\pm \cap (A_{k}^\pm )^c \right| \mu, \\
&\leq \hat Z_{k}^\pm \mathbb{E}_n \left[ \left. \mathcal{E}(y_{k}) \right| \mathcal{F}_{k} \right] + \left| \hat A_{k}^\pm \cap (A_{k}^\pm )^c \right| \mu,
\end{align*}
where we used the observation that $\mathcal{E}(t)$ is non decreasing and $T_{\bf i}^\pm \leq Y_{k}^\pm \leq y_k$. 

Similarly, the conditional expectation in \eqref{eq:ukSecond} is bounded from above by
\begin{align*}
\mathbb{E}_n \left[ \left. \sum_{{\bf i} \in A_{k}^\pm} \left(  \chi_{\bf i}^\pm - \hat \chi_{\bf i}^\pm \right)^+ + \sum_{{\bf i} \in A_{k}^\pm \cap (\hat A_{k}^\pm )^c} \hat \chi_{\bf i}^\pm \right| \mathcal{F}_{k} \right] &\leq \sum_{{\bf i} \in A_{k}^\pm} \mathbb{E}_n \left[ \left. \mathcal{E}(T_{\bf i}^\pm) \right| \mathcal{F}_{k} \right] + \left| A_{k}^\pm \cap (\hat A_{k}^\pm )^c \right| \mu \\
&\leq Z_{k}^\pm \mathbb{E}_n \left[ \left. \mathcal{E}(Y_{k}^\pm) \right| \mathcal{F}_{k} \right] + \left| A_{k}^\pm \cap (\hat A_{k}^\pm )^c \right| \mu \\
&\leq (1+n^{-\gamma}) \hat Z_k^\pm \mathbb{E}_n \left[ \left. \mathcal{E}(y_{k}) \right| \mathcal{F}_{k} \right] + \left| A_{k}^\pm \cap (\hat A_{k}^\pm )^c \right| \mu,
\end{align*}
where in the last step we also used that $Z_k \leq (1+ n^{-\gamma}) \hat Z_k$ on the event $C_k$. 

Noting that $C_k \cap E_k \subseteq C_{k-1} \cap E_{k-1}$ gives 
\begin{align*}
u_{k+1} &\leq \mathbb{E}_n \left[ 1(C_k \cap E_k) (2 + n^{-\gamma}) \hat Z_k^\pm \mathbb{E}_n \left[ \left. \mathcal{E}(y_{k}) \right| \mathcal{F}_{k} \right] \right] + \mu u_k \\
&\leq 3 \mathbb{E}_n \left[ 1(E_k) \hat Z_k^\pm \mathcal{E}(y_k) \right] + \mu u_k. 
\end{align*}
By Lemma \ref{L.BoundE} we have that
\begin{align*}
3\mathbb{E}_n \left[ 1(E_k) \hat Z_k^\pm \mathcal{E}(y_k) \right] &\leq 3Q_2 \mu^k \left( \frac{y_k^\kappa}{n^{\kappa(1-\eta)}} + n^{-\varepsilon}  \right)  \\
&\leq Q_2' \left( \mu^{(1+\kappa) k} n^{-\kappa(1-2\eta)} + \mu^k n^{-\varepsilon} \right)
\end{align*}
for some constants $Q_2, Q_2' < \infty$. Let $r_k = Q_2' \mu^k \left( \mu^{\kappa k} n^{-\kappa(1-2\eta)} + n^{-\varepsilon} \right)$ and iterate the inequality $u_{k+1} \leq r_k + \mu u_k$ to obtain
\begin{align*}
u_{k+1} &\leq \sum_{j=1}^{k} \mu^{j-1} r_{k+1-j} + \mu^{k} u_1 = \sum_{j=1}^{k} \mu^{j-1} Q_2' \mu^{k+1-j}  \left( \mu^{\kappa(k+1-j)} n^{-\kappa(1-2\eta)} + n^{-\varepsilon} \right) + \mu^k u_1 \\
&= Q_2' \mu^k \left( \sum_{j=1}^k \mu^{\kappa(k+1-j)} n^{-\kappa(1-2\eta)} + k n^{-\varepsilon} \right)  + \mu^k u_1 \\
&= Q_2' \mu^k \left( n^{-\kappa(1-2\eta)} \frac{\mu^\kappa (\mu^{\kappa k} - 1)}{\mu^{\kappa} - 1}  + k n^{-\varepsilon}  \right) + \mu^k u_1.
\end{align*}
Noting that 
$$u_1 = \mathbb{E}_n \left[ |\hat \chi_{\emptyset}^\pm - \chi_\emptyset^\pm | \right] \leq n^{-\varepsilon}$$
completes the proof. 
\end{proof}

The last preliminary result before the proof of Theorem \ref{thm:main_coupling} is an analysis of the coupling when the branching process becomes extinct, which occurs most likely when the out- or in-component of the node being explored in the graph is small.

\begin{prop} \label{P.OnExtinction}
Fix $0 < \delta < 1$,  $0 < \gamma < \min\{\delta \kappa, \varepsilon\}$ and define for $k \geq 1$ the event $C_k$ according to \eqref{eq:DefCk}. Let $W^\pm = \lim_{k \to \infty} \hat Z_k^\pm/(\nu \mu^{k-1})$. Then, provided $({\bf D}_n^-, {\bf D}_n^+)$ satisfies Assumption~\ref{A.Wasserstein}, there exists a constant $Q_4 < \infty$ such that for all $1 \leq k \leq (1-\delta)\log n/\log \mu$, 
$$\mathbb{P}_n\left( C_{k}^c, \, W^\pm = 0 \right) \leq Q_4 n^{-\varepsilon \kappa/(1+\kappa+\varepsilon)}.$$
\end{prop}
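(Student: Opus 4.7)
My plan is to use Lemma~\ref{L.Extinction} to reduce the analysis, on $\{W^\pm=0\}$, to a subcritical branching tree with finite expected total progeny, and then to split on the size of this progeny and apply the Kantorovich--Rubinstein estimate of Lemma~\ref{L.KRdistance} to each tree individual. By Lemma~\ref{L.Extinction}, conditionally on $\{W^\pm=0\}$ the process $\{\hat Z_r^\pm\}$ is a delayed branching process with offspring distributions $(\tilde g,\tilde f)$, where $\tilde f(i)=f^\pm(i)q^{i-1}$ has mean $\lambda=\sum_i i f^\pm(i)q^{i-1}<1$ (the case $q=0$ is vacuous since then $W^\pm>0$ a.s.). Consequently the total progeny $T:=\sum_{r\geq 1}\hat Z_r^\pm$ satisfies $E[T;W^\pm=0]\leq \tilde\nu/(1-\lambda)=:C_T<\infty$. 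Because $\{\hat Z_r^\pm\}$ is constructed purely from the uniforms $\{U_{\bf i}\}$, its law is independent of $\mathscr F_n$, so the same bound holds under $\mathbb{P}_n$.

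Introduce a threshold $M=n^\beta$ with $\beta\in(0,1)$ to be optimized, and decompose
\[
\mathbb{P}_n(C_k^c,W^\pm=0)\leq \mathbb{P}_n(T>M,W^\pm=0)+\mathbb{P}_n(C_k^c,T\leq M,W^\pm=0),
\]
so that Markov's inequality gives $\mathbb{P}_n(T>M,W^\pm=0)\leq C_T/M=C_T n^{-\beta}$. For the second term, I observe that by the construction of the coupling, if $\chi_{\bf i}^\pm=\hat\chi_{\bf i}^\pm$ for every individual ${\bf i}$ of the branching tree, then by induction on $r$ the indexed sets $A_r^\pm$ and $\hat A_r^\pm$ coincide, so $\Delta_r=0$ for all $r$ and $C_k$ holds. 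Hence on $\{C_k^c,T\leq M\}$ at least one tree-individual must carry a coupling mismatch. Since $|\hat\chi_{\bf i}^\pm-\chi_{\bf i}^\pm|$ is a nonnegative integer, Lemma~\ref{L.KRdistance} gives
\[
\mathbb{P}_n\bigl(\chi_{\bf i}^\pm\neq\hat\chi_{\bf i}^\pm\bigm|\mathcal{G}_{\bf i}\bigr)\leq \mathbb{E}_n\bigl[|\hat\chi_{\bf i}^\pm-\chi_{\bf i}^\pm|\bigm|\mathcal{G}_{\bf i}\bigr]\leq \mathcal{E}(T_{\bf i}^\pm)\leq \mathcal{E}(M)
\]
on $\{T\leq M\}$, using monotonicity of $\mathcal{E}$ and $T_{\bf i}^\pm\leq T$. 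A union bound over the at most $T\leq M$ tree individuals, together with Lemma~\ref{L.BoundE} (valid since $M\leq \nu n/2$ for $n$ large), gives
\[
\mathbb{P}_n(C_k^c,T\leq M,W^\pm=0)\leq M\cdot \mathbb{E}_n[\mathcal{E}(M)]\leq Q_2\left(\frac{M^{1+\kappa}}{n^\kappa}+M n^{-\varepsilon}\right).
\]

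Combining everything and substituting $M=n^\beta$,
\[
\mathbb{P}_n(C_k^c,W^\pm=0)\leq Q_2\bigl(n^{\beta(1+\kappa)-\kappa}+n^{\beta-\varepsilon}\bigr)+C_T n^{-\beta}.
\]
A direct calculation (using $\varepsilon\leq 1$ and $\kappa\leq 1$) shows that the choice $\beta=\varepsilon\kappa/(1+\kappa+\varepsilon)$ makes each of the three exponents at most $-\varepsilon\kappa/(1+\kappa+\varepsilon)$, producing the stated bound with $Q_4=2Q_2+C_T$.

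\textbf{Main obstacle.} The delicate step is the induction justifying that on $\{C_k^c,T\leq M\}$ the mismatch must occur at a tree-individual, so that the Kantorovich--Rubinstein bound applies: once the coupling fails, the graph exploration may visit stubs absent from the tree and $\chi_{\bf i}^\pm$ need not be defined for every tree ${\bf i}$. This is handled by analyzing the first failure step $r^*:=\min\{r:C_r$ fails$\}$. On $\{T\leq M\}$ with $M<n^\gamma$ (i.e.\ when $\beta<\gamma$), the threshold $\hat Z_r^\pm n^{-\gamma}<1$ forces $\Delta_r=0$ and $A_r^\pm=\hat A_r^\pm$ for all $r<r^*$, so the mismatch at step $r^*$ is attributable to an individual in the common labelled set $A_{r^*-1}^\pm=\hat A_{r^*-1}^\pm$, where both $\chi_{\bf i}^\pm$ and $\hat\chi_{\bf i}^\pm$ are well defined. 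In the remaining regime $\gamma\leq\beta$ the threshold is larger, so $C_k^c$ is harder to satisfy and one may instead invoke Lemma~\ref{L.Expectations} together with a direct Markov bound on $\sum_r \Delta_r$ without splitting on $T$.
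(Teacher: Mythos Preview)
Your approach is correct and closely parallels the paper's own proof. Both arguments reduce to $C_k^c\subset B_k^c$ (failure of \emph{perfect} coupling), decompose $B_k^c$ over the first-failure generation $r^*$, and bound each piece via Lemmas~\ref{L.KRdistance} and~\ref{L.BoundE}. The difference lies only in the truncation device: the paper controls the tree through the maximal event $I_k=\bigcap_{r\le k}\{\hat Z_r^\pm/(\lambda^\pm)^r\le n^\tau\}$ and bounds $P(I_k^c\mid W^\pm=0)$ by Doob's inequality (using that $\hat Z_r^\pm/(\nu^\pm(\lambda^\pm)^{r-1})$ is a martingale on extinction), whereas you use the total progeny $\{T\le M\}$ and Markov's inequality. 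Both choices yield the same optimal exponent after balancing $\tau$ (resp.\ $\beta$). Your total-progeny truncation is arguably the more natural one; the paper's has the mild advantage that the truncation event $I_{r-1}$ is adapted to the filtration at step $r$.

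Two minor points. First, $\{T\le M\}$ is not $\mathcal{G}_{\bf i}$-measurable (it depends on $U_{\bf j}$ for descendants not yet explored), so strictly speaking you cannot pull it through the conditional expectation when applying Lemma~\ref{L.KRdistance}. The fix is immediate: since $T_{\bf i}^\pm\le \hat Y_{r-1}^\pm\le T$ on $B_{r-1}$, replace $1(T\le M)$ by the coarser $\mathcal{G}_{\bf i}$-measurable indicator $1(\hat Y_{r-1}^\pm\le M)$, after which the deterministic bound $\sum_r 1(\hat Y_{r-1}^\pm\le M)\hat Z_{r-1}^\pm\le 1+M$ recovers exactly your estimate $M\cdot\mathbb{E}_n[\mathcal{E}(M)]$. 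Second, your ``Main obstacle'' paragraph is a red herring. Since $B_k\subset C_k$, the first step $r^*$ at which $B_r$ fails always satisfies $A_{r^*-1}^\pm=\hat A_{r^*-1}^\pm$, so the offending individual lies in the common labelled set where $\chi_{\bf i}^\pm$ is defined; no constraint $\beta<\gamma$ is needed, and the alternative route via Lemma~\ref{L.Expectations} that you sketch for $\gamma\le\beta$ is unnecessary.
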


\begin{proof}
We start by pointing out that if $q^\pm = 0$ the probability that the branching process $\{ \hat Z_k^\pm: k \geq 1\}$  becomes extinct is zero unless $\hat Z_1^\pm = 0$ (see Theorem 4 in \cite{Athreya2012}). Since in our construction of the coupling $Z_1^\pm = \hat Z_1^\pm$, we may assume from now on that $q^\pm > 0$. 

Analogously to the events $E_m$ and $C_m$ defined in \eqref{eq:DefEk} and \eqref{eq:DefCk}, we now define for $m \geq 1$
\begin{align}
B_m &= \bigcap_{r=1}^m \left\{ \left| \hat A^\pm_{r} \cap (A^\pm_{r})^c \right| =0, \, \left| A^\pm_{r} \cap (\hat A^\pm_{r})^c \right|=0 \right\}, \label{eq:DefBk} \\
I_m &= \bigcap_{r=1}^m \left\{  \hat Z_r^\pm/ (\lambda^\pm)^r \leq n^\tau \right\}, \notag
\end{align}
where $\lambda^\pm = \sum_{j=1}^\infty j f^\pm(j) (q^\pm)^{j-1} < 1$ and $\tau = \kappa/(1+\kappa +\varepsilon)$. Now note that
$$\mathbb{P}_n( C_{k}^c, \, W^\pm = 0) \leq \mathbb{P}_n( C_{k}^c \cap I_{k}) + P(I_{k}^c, \, W^\pm = 0),$$
where the last probability is independent of the bi-degree sequence. Next, use Lemma \ref{L.Extinction} to see that conditional on $\{ W^\pm = 0\}$, $\hat Z_k^\pm/ (\nu^\pm (\lambda^\pm)^{k-1})$ is a mean one martingale, where $\nu^\pm$ is the mean of $\tilde g^\pm$ and $\lambda^\pm$ is the mean of $\tilde f^\pm$. It then follows from Doob's inequality that
$$P(I_{k}^c, \, W^\pm = 0) \leq P(I_{k}^c | W^\pm = 0) \leq (\nu^\pm/ \lambda^\pm) n^{-\tau}.$$
Next, write
$$\mathbb{P}_n( C_{k}^c \cap I_{k}) \leq \mathbb{P}_n( B_{k}^c \cap I_{k}) \leq \sum_{r=1}^{k} \mathbb{P}_n( B_{r-1} \cap B_r^c \cap I_{k})$$
and note that the event $B_{r-1}$ implies that $Z^\pm_i = \hat Z_i^\pm$ for all $1 \leq i \leq r-1$. In addition, since $\lambda^\pm < 1$ we have $(\lambda^\pm)^{r_n+1} n^{-\tau} < 1$ for $r_n \triangleq \lfloor \tau \log n/|\log \lambda^\pm| \rfloor$. The observation that $\hat Z^\pm_i$ is integer valued then gives that the event $B_{r-1} \cap I_{k}$  implies that $Z^\pm_{r-1} = \hat Z_{r-1}^\pm = 0$ for all $r -1 > r_n$. Since for any $r \geq 1$ we have
\begin{align}
 \left| \hat A^\pm_{r} \cap (A^\pm_r)^c \right| &=  \sum_{{\bf i} \in \hat A_{r-1}^\pm \cap A_{r-1}^\pm} \sum_{j=1}^{\hat \chi_{\bf i}^\pm} 1(({\bf i},j) \in (A_r^\pm)^c)  +  \sum_{{\bf i} \in \hat A_{r-1}^\pm \cap (A_{r-1}^\pm)^c} \hat \chi_{\bf i}^\pm \notag \\
 &=  \sum_{{\bf i} \in \hat A_{r-1}^\pm \cap A_{r-1}^\pm} \left( \hat \chi_{\bf i}^\pm - \chi_{\bf i}^\pm \right)^+  +  \sum_{{\bf i} \in \hat A_{r-1}^\pm \cap (A_{r-1}^\pm)^c} \hat \chi_{\bf i}^\pm, \label{eq:Symmetry1}
 \end{align}
 and 
\begin{align}
 \left| A^\pm_{r} \cap (\hat A^\pm_r)^c \right|  &= \sum_{{\bf i} \in A_{r-1}^\pm \cap \hat A_{r-1}^\pm} \left( \chi_{\bf i}^\pm - \hat \chi_{\bf i}^\pm \right)^+   + \sum_{{\bf i} \in A_{r-1}^\pm \cap (\hat A_{r-1}^\pm)^c} \chi_{\bf i}^\pm \label{eq:Symmetry2} 
\end{align}
then, $\mathbb{P}_n( B_{r-1} \cap B_r^c \cap I_{k}) = 0$ for all $r_n+2 \leq r \leq k$. For $1 \leq r \leq r_n+1$ we obtain using \eqref{eq:Symmetry1} and \eqref{eq:Symmetry2} that
\begin{align*}
\mathbb{P}_n( B_{r-1} \cap B_r^c \cap I_{k}) &\leq \mathbb{P}_n\left( \left| \hat A^\pm_{r} \cap (A^\pm_{r})^c \right| +  \left| A^\pm_{r} \cap (\hat A^\pm_{r})^c \right| \geq 1, \, B_{r-1} \cap I_{r-1} \right) \\
&= \mathbb{P}_n\left( \sum_{{\bf i} \in \hat A^\pm_{r-1}} \left| \chi_{\bf i}^\pm - \hat \chi_{\bf i}^\pm \right| \geq 1, \, B_{r-1} \cap I_{r-1} \right).
\end{align*}
Now let $\mathcal{F}_r$ denote the sigma-algebra generated by the bi-degree sequence and the history of the exploration process up until step $r-1$ is completed; note that this includes the value of $Z_r^\pm$. Also define $\mathcal{G}_{\bf i}$ to be the sigma-algebra generated by the bi-degree sequence and the exploration process up to the time that outbound (inbound) stub {\bf i} is about to be traversed; note that for ${\bf i} \in \hat A_{r-1}^\pm$ we have $\mathcal{F}_{r-1} \subseteq \mathcal{G}_{\bf i}$. Applying Markov's inequality conditionally on $\mathcal{F}_{r-1}$ we obtain
$$\mathbb{P}_n\left( \sum_{{\bf i} \in \hat A^\pm_{r-1}} \left| \chi_{\bf i}^\pm - \hat \chi_{\bf i}^\pm \right| \geq 1, \, B_{r-1} \cap I_{r-1} \right) \leq \mathbb{E}_n \left[ 1(B_{r-1} \cap I_{r-1}) \sum_{{\bf i} \in \hat A_{r-1}^\pm} \mathbb{E}_n \left[ \left. \left| \chi_{\bf i}^\pm - \hat \chi_{\bf i}^\pm \right| \right| \mathcal{F}_{r-1} \right] \right].$$
To analyze the conditional expectation, recall that $T_{\bf i}^\pm$ is the number of stubs (outbound for $+$ and inbound for $-$) that have been seen up until it is stub ${\bf i}$'s turn to be traversed, and use Lemma~\ref{L.KRdistance} to obtain
$$\mathbb{E}_n \left[ \left. \left| \chi_{\bf i}^\pm - \hat \chi_{\bf i}^\pm \right| \right| \mathcal{F}_{r-1} \right]  = \mathbb{E}_n \left[ \left. \mathbb{E}_n \left[ \left. \left| \chi_{\bf i}^\pm - \hat \chi_{\bf i}^\pm \right|  \right| \mathcal{G}_{\bf i} \right] \right| \mathcal{F}_{r-1} \right] \leq  \mathbb{E}_n \left[ \left. \mathcal{E}(T_{\bf i}^\pm) \right| \mathcal{F}_{r-1} \right].$$
Recall that on the event $B_{r-1}$ we have $Z_i^\pm = \hat Z_i^\pm$ for all $1 \leq i \leq r-1$, and therefore, for ${\bf i} \in \hat A_{r-1}^\pm$,  $T_{\bf i}^\pm \leq Y_{r-1}^\pm = \hat Y_{r-1}^\pm = \sum_{i=1}^{r-1} \hat Z_i^\pm$. Moreover, on the event $I_{r-1}$ we have $\hat Y_{r-1}^\pm \leq \sum_{i=1}^{r-1} (\lambda^\pm)^{r-1} n^{\tau} \leq n^\tau/(1-\lambda^\pm)$, and it follows from Lemma \ref{L.BoundE} that
\begin{align*}
\mathbb{P}_n( B_{r-1} \cap B_r^c \cap I_{k}) &\leq \mathbb{E}_n\left[ 1(I_{r-1}) \sum_{{\bf i} \in \hat A_{r-1}^\pm} \mathbb{E}_n\left[ \mathcal{E}(n^\tau/(1-\lambda^\pm)) | \mathcal{F}_{r-1} \right] \right] \\
&= \mathbb{E}_n\left[ 1(I_{r-1}) \hat Z_{r-1}^\pm  \mathcal{E}(n^\tau/(1-\lambda^\pm))  \right] \\
&\leq (\lambda^\pm)^{r-1} n^\tau \mathbb{E}_n\left[ \mathcal{E}(n^\tau/(1-\lambda^\pm))  \right] \\
&\leq Q_2 (\lambda^\pm)^{r-1} n^\tau \left(  \frac{ (n^\tau/(1-\lambda^\pm))^\kappa}{n^\kappa} + n^{-\varepsilon } \right) \\
&= O\left(   (\lambda^\pm)^{r}  n^{-\varepsilon\tau} \right),
\end{align*}
where the last equality is due to our choice of $\tau$. Thus, we have shown that
\begin{align*}
\mathbb{P}_n\left( C_{k}^c, \, W^\pm = 0 \right) = O\left(  n^{-\varepsilon\tau} \sum_{r=1}^{r_n+1} (\lambda^\pm)^{r}  +  n^{-\tau}  \right) = O\left( n^{-\varepsilon\tau} \right).
\end{align*}
\end{proof}

We are now ready to prove Theorem \ref{thm:main_coupling}, which in view of Proposition \ref{P.OnExtinction}, reduces to analyzing the event that the error in the coupling is large conditionally on the branching process surviving. 

\begin{proof}[Proof of Theorem \ref{thm:main_coupling}]
Let $W^\pm = \lim_{k \to \infty} \hat Z_k^\pm/(\nu \mu^{k-1})$ and for each $m \geq 1$ define the event $C_m$ according to \eqref{eq:DefCk}.  Now note that
$$\mathbb{P}_n(C_{k}^c) \leq \mathbb{P}_n (C_{k}^c, \, W^\pm = 0) + \mathbb{P}_n (C_{k}^c, \, W^\pm > 0),$$
where by Proposition \ref{P.OnExtinction} we have 
$$\mathbb{P}_n (C_{k}^c, \, W^\pm = 0) \leq Q_4 n^{-\varepsilon\kappa/(1+\kappa+\epsilon)}$$
for some constant $Q_4 < \infty$. 

To analyze $\mathbb{P}_n( C_{k}^c, \, W^\pm > 0)$ we proceed similarly to the proof of Proposition \ref{P.OnExtinction} by setting $\eta = (\delta \kappa - \gamma)/(4\kappa) \in (0, \delta/4)$ and defining the events $E_m$ and $B_m$, $m \geq 1$, according to \eqref{eq:DefEk} and \eqref{eq:DefBk}, respectively. Define also $s_n = \min\{k, \lceil c \log n/\log \mu \rceil\}$, with $0 < c < \min\{\kappa(1-2\eta)/(1+\kappa), \, \varepsilon\}$, and note that $C_r^c \subset B_r^c$ for any $r \geq 1$. We start by deriving an upper bound as follows
\begin{align*}
\mathbb{P}_n( C_{k}^c, \, W^\pm > 0) &\leq \mathbb{P}_n( C_{k}^c \cap E_{k}, \, W^\pm > 0) + P(E_{k}^c) \\
&\leq \mathbb{P}_n (C_{s_n-1} \cap C_{k}^c \cap E_{k}, \, W^\pm > 0) + \mathbb{P}_n (C_{s_n-1}^c \cap E_{k}, \, W^\pm > 0) + P(E_{k}^c) \\
&\leq \mathbb{P}_n (C_{s_n-1} \cap C_{k}^c \cap E_{k}, \, W^\pm > 0) + \mathbb{P}_n (B_{s_n-1}^c \cap E_{k}) + P(E_{k}^c) \\
&\leq \sum_{r=1}^{s_n-1} \mathbb{P}_n( B_{r-1} \cap B_r^c \cap E_{k} ) + \mathbb{P}_n (C_{s_n-1} \cap C_{k}^c \cap E_{k}, \, W^\pm > 0) + P(E_{k}^c).
\end{align*}
Note that Doob's inequality gives $P(E_{k}^c) \leq (\mu/\nu) n^{-\eta}$. Also, if we let $\mathcal{F}_r$ denote the sigma-algebra generated by the bi-degree sequence and the history of the exploration process up until step $r-1$ is completed, the same steps used in the proof of Proposition \ref{P.OnExtinction} give that for $1 \leq r \leq s_n-1$, 
\begin{align*}
\mathbb{P}_n( B_{r-1} \cap B_r^c \cap E_{k} ) &\leq \mathbb{E}_n \left[ 1(E_{r-1}) \sum_{{\bf i} \in \hat A_{r-1}^\pm} \mathbb{E}_n [\mathcal{E}(\hat Y_{r-1}^\pm) | \mathcal{F}_{r-1}] \right] \\
&= \mathbb{E}_n\left[ 1(E_{r-1}) \hat Z_{r-1}^\pm  \mathcal{E}(\hat Y_{r-1}^\pm) \right] \leq  \mathbb{E}_n\left[ 1(E_{r-1}) \hat Z_{r-1}^\pm \mathcal{E}( n^\eta \mu^r/(\mu-1) ) \right] \\
&\leq Q_2 \mu^{r-1} \left( \frac{(n^\eta \mu^r/(\mu-1))^\kappa}{n^{\kappa(1-\eta)}} + n^{-\varepsilon} \right),
\end{align*}
where we used that on the event $E_{r-1}$ we have $\hat Y_{r-1}^\pm \leq \sum_{i=1}^{r-1} \mu^{i} n^\eta \leq n^\eta \mu^r/(\mu-1) $ and that $\mathcal{E}(t)$ is non decreasing, followed by an application of Lemma \ref{L.BoundE}. We then obtain that
$$\mathbb{P}_n( B_{r-1} \cap B_r^c \cap E_{k}) = O\left( \frac{\mu^{r(1+\kappa)}}{n^{\kappa(1-2\eta)}} + \mu^r n^{-\varepsilon } \right),$$
which implies that, 
\begin{align*}
\sum_{r=1}^{s_n-1} \mathbb{P}_n( B_{r-1} \cap B_r^c \cap E_{k} ) &= O\left( \frac{\mu^{s_n(1+\kappa)}}{n^{\kappa(1-2\eta)}} + \mu^{s_n} n^{-\varepsilon } \right) = O\left( n^{c(1+\kappa)- \kappa(1-2\eta)} + n^{c-\varepsilon} \right).
\end{align*}

We have thus shown that, as $n \to \infty$, 
$$\mathbb{P}_n( C_{k}^c, \, W^\pm > 0) \leq \mathbb{P}_n( C_{s_n-1} \cap C_{k}^c \cap E_{k}, \, W^\pm > 0) + O\left( n^{-\eta} + n^{c(1+\kappa) - \kappa(1-2\eta)} + n^{c-\varepsilon } \right),$$
with all the exponents of $n$ inside the big-O term strictly negative. To analyze the remaining probability we first introduce one last conditioning event. Set $1 < u = \mu^{1-b}$ with $b = \min\{ (1-\delta)/2, \, (\varepsilon - \gamma)/2, \, (\kappa\delta - \gamma)/4 \} /(1-\delta) \in (0,1/2)$, and define
$$J_{s_n} = \left\{ \inf_{r \geq s_n} \hat Z_{r}^\pm/u^r \geq 1 \right\}.$$
Now write
\begin{align*}
\mathbb{P}_n( C_{s_n-1} \cap C_{k}^c \cap E_{k}, \, W^\pm > 0) &\leq \mathbb{P}_n( C_{s_n-1} \cap C_{k}^c \cap E_{k} \cap J_{s_n} ) + P(J_{s_n}^c, \, W^\pm > 0) \\
&\leq \sum_{r=s_n}^{k} \mathbb{P}_n( C_{r-1} \cap C_r^c \cap E_{k} \cap J_{s_n}) + P(J_{s_n}^c, \, W^\pm > 0).
\end{align*}
By Lemma \ref{L.Martingale} we have
$$P(J_{s_n}^c, \, W^\pm > 0) \leq Q_1 \left( u^{-\kappa s_n} + (u/\mu)^{\alpha^\pm s_n} 1(q^\pm > 0) \right) = O\left( n^{-\kappa c (1-b)} + n^{-\alpha^\pm c b} 1( q^\pm > 0) \right),$$
where $\alpha^\pm = |\log \lambda^\pm|/\log \mu > 0$. 

To bound each of the remaining probabilities, $\mathbb{P}_n( C_{r-1} \cap C_r^c \cap E_{k} \cap J_{s_n})$, use the union bound followed by Markov's inequality applied conditionally on $\mathcal{F}_{k-1}$, to obtain, for $s_n \leq r \leq k$, 
\begin{align*}
\mathbb{P}_n( C_{r-1} \cap C_r^c \cap E_{k} \cap J_{s_n}) &\leq \mathbb{P}_n\left( \left| \hat A_r^\pm \cap (A_r^\pm)^c \right| > \hat Z_r^\pm n^{-\gamma}, \, C_{r-1} \cap E_{k} \cap J_{s_n}\right) \\
&\hspace{5mm} + \mathbb{P}_n\left( \left| A_r^\pm \cap (\hat A_r^\pm)^c \right| > \hat Z_r^\pm n^{-\gamma}, \, C_{r-1} \cap E_{k} \cap J_{s_n}\right) \\
&\leq \mathbb{P}_n\left( \left| \hat A_r^\pm \cap (A_r^\pm)^c \right| > u^r n^{-\gamma}, \, C_{r-1} \cap E_{r-1} \right) \\
&\hspace{5mm} + \mathbb{P}_n\left( \left| A_r^\pm \cap (\hat A_r^\pm)^c \right| > u^r n^{-\gamma}, \, C_{r-1} \cap E_{r-1} \right) \\
&\leq \frac{n^\gamma}{u^r} \mathbb{E}_n \left[ 1(C_{r-1} \cap E_{r-1}) \mathbb{E}_n\left[ \left. \left| \hat A_r^\pm \cap (A_r^\pm)^c \right| + \left| A_r^\pm \cap (\hat A_r^\pm)^c \right| \right| \mathcal{F}_{r-1} \right] \right].
\end{align*}
It follows from Lemma \ref{L.Expectations} that
\begin{align*}
\mathbb{P}_n( C_{r-1} \cap C_r^c \cap E_{k} \cap J_{s_n}) &\leq Q_3 \frac{n^\gamma}{u^r} \cdot \mu^r \left( \mu^{\kappa r} n^{-\kappa(1-2\eta)} + r n^{-\varepsilon} \right),
\end{align*}
which in turn implies that, as $n \to \infty$,
\begin{align*}
\sum_{r=s_n}^{k} \mathbb{P}_n( C_{r-1} \cap C_r^c \cap E_{k} \cap J_{s_n}) &= O\left( n^{\gamma-\kappa(1-2\eta)} \sum_{r=s_n}^{k} (\mu^{1+\kappa}/u)^{r} + n^{\gamma-\varepsilon} \sum_{r=s_n}^{k} r (\mu/u)^r  \right) \\
&= O\left( n^{\gamma-\kappa(1-2\eta)} (\mu^{1+\kappa}/u)^{k} + n^{\gamma - \varepsilon} k (\mu/u)^{k} \right) \\
&= O\left( n^{\gamma -\kappa(1-2\eta) + (1-\delta) (b+\kappa)} +  n^{\gamma-\varepsilon + (1-\delta)b} \log n \right) \\
&= O\left( n^{(\gamma-\kappa\delta)/2 + (1-\delta)b} + n^{\gamma-\varepsilon + (1-\delta)b} \log n \right).
\end{align*}
Since all the exponents inside the big-O term are strictly negative, this completes the proof. 
\end{proof}


\subsection{Distances in the directed configuration model}
\label{SS.ProofsDistances} 

In this section we prove Proposition \ref{P.HopCount}, Proposition \ref{P.BPapprox}, and Corollary \ref{C.FiniteHopcount}. As  mentioned in Section \ref{S.Distances},  Propositions \ref{P.HopCount} and \ref{P.BPapprox} together yield Theorem \ref{T.MainTheoremBody}. As a preliminary result for the proof of Proposition \ref{P.HopCount} we first state and prove the following technical lemma. Throughout the remainder of the section we use $x \wedge y$ to denote the minimum of $x$ and $y$.  

\begin{lemma} \label{L.Taylor}
For any nonnegative $x, x_0 > 0$, $y_i, z_i \geq 0$ with $z_i < x$ for all $i$, and any $m \geq 1$, we have 
$$- \frac{x_0}{x^2} (x_0-x)^+ - \frac{x_0}{2} \max_{1 \leq i \leq m} \frac{z_i}{(x-z_i)^2} \leq \prod_{i=1}^{m} \left( 1- \frac{z_i}{x} \right)^{y_i} - \exp\left\{ - \frac{1}{x_0} \sum_{i=1}^{m} y_i z_i \right\} \leq \frac{|x-x_0|}{(x \wedge x_0)}.$$
\end{lemma}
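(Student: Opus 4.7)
The plan is to set $S=\sum_i y_i z_i$, $P=\prod_i(1-z_i/x)^{y_i}$, and $E=e^{-S/x_0}$, and reduce everything to the Taylor expansion of $\log(1-u)$. The upper bound comes from $\log(1-u)\leq -u$, which gives $P\leq e^{-S/x}$. When $x\geq x_0$ one has $e^{-S/x}\geq E$, and writing
$$
e^{-S/x}-e^{-S/x_0}=e^{-S/x}\bigl(1-e^{-S(x-x_0)/(xx_0)}\bigr)\leq e^{-S/x}\cdot S(x-x_0)/(xx_0),
$$
combined with the elementary bound $Se^{-S/x}\leq x/e\leq x$, produces the claimed $(x-x_0)/x_0=|x-x_0|/(x\wedge x_0)$. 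When $x<x_0$, monotonicity of $t\mapsto e^{-St}$ gives $P\leq e^{-S/x}\leq e^{-S/x_0}=E$, so $P-E\leq 0$ and the upper bound is trivial.

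For the lower bound I would use the sharper expansion
$$
\log(1-u)=-u-\int_0^u\frac{u-t}{(1-t)^2}\,dt\geq -u-\frac{u^2}{2(1-u)^2},\qquad 0\leq u<1,
$$
applied to each factor of $P$. Setting $R=\tfrac12\sum_i y_i z_i^2/(x-z_i)^2\geq 0$ gives $\log P\geq -S/x-R$, and hence the quantity $U:=\log(P/E)$ satisfies
$$
U=\sum_i y_i\bigl[\log(1-z_i/x)+z_i/x\bigr]+\frac{S(x-x_0)}{xx_0}\geq -R-\frac{S(x_0-x)^+}{xx_0}.
$$
Since $e^U-1\geq U$ and $E>0$, one has $P-E=E(e^U-1)\geq EU\geq -ER-\tfrac{ES(x_0-x)^+}{xx_0}$.

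The proof then finishes by converting the two error terms into the form stated in the lemma. The inequality $R\leq\tfrac{S}{2}\max_i z_i/(x-z_i)^2$ (obtained by factoring $z_i^2=z_i\cdot z_i$) together with $ES=Se^{-S/x_0}\leq x_0/e\leq x_0$ yields $ER\leq\tfrac{x_0}{2}\max_i z_i/(x-z_i)^2$. Similarly, $ES\leq x_0$ gives $\tfrac{ES(x_0-x)^+}{xx_0}\leq (x_0-x)^+/x$, and in the only nontrivial case $x<x_0$ the further bound $1/x\leq x_0/x^2$ upgrades this to $\tfrac{x_0}{x^2}(x_0-x)^+$. Combining the two gives exactly the lower bound in the statement.

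The only real subtlety is the choice to work with $U=\log(P/E)$ instead of the apparently natural decomposition $P-E=(P-e^{-S/x})+(e^{-S/x}-E)$: the latter route forces one to bound $Se^{-S/x}$, which is only $\leq x/e$ and so produces an $x/2$ (not $x_0/2$) prefactor—too weak when $x_0\ll x$. Keeping the $E=e^{-S/x_0}$ as an outer multiplier is what turns $Se^{-S/x_0}\leq x_0/e$ into the required $x_0$ scaling, and this is the only step whose shape is not dictated by routine Taylor estimates.
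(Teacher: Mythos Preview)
Your proof is correct and follows essentially the same route as the paper: the upper bound via $\log(1-u)\leq -u$ followed by comparing $e^{-S/x}$ with $e^{-S/x_0}$, and the lower bound via the second-order expansion $\log(1-u)\geq -u-\tfrac{u^2}{2(1-u)^2}$, then $e^{U}-1\geq U$ with $E=e^{-S/x_0}$ kept as the outer factor so that $Se^{-S/x_0}\leq x_0$ produces the stated prefactors. The only cosmetic difference is that the paper handles the $1/x\to 1/x_0$ conversion inside the Taylor step (via the mean value theorem on $a\mapsto 1/a$), whereas you do it afterward using the exact identity $1/x_0-1/x=(x-x_0)/(xx_0)$; both land on the same bound.
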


\begin{proof}
For the upper bound note that
\begin{align*}
\prod_{i=2}^{m} \left( 1- \frac{z_i}{x} \right)^{y_i} &= \exp \left\{ \sum_{i=1}^{m} y_i \log \left(1 - \frac{z_i}{x} \right) \right\} \leq \exp\left\{ - \frac{1}{x} \sum_{i=1}^{m} y_i z_i \right\}  \\
&\leq \exp\left\{ - \frac{1}{x_0} \sum_{i=1}^{m} y_i z_i \right\} + \frac{|x-x_0|}{x_0 \wedge x} , 
\end{align*}
where in the second step we used the inequality $\log(1-t) \leq -t$ for $t \in [0, 1)$ and in the third step we used the inequality 
$$|e^{-S/x} - e^{-S/x_0}| \leq \frac{S}{\xi^2} e^{-S/\xi} |x - x_0| \leq \sup_{t \geq 0} t e^{-t} \cdot \frac{|x-x_0|}{\xi} \leq \frac{|x-x_0|}{(x \wedge x_0)}$$ 
for any $S, x, x_0 \geq 0$ and some $\xi$ between $x$ and $x_0$.  

Similarly, using the first order Taylor expansion for $\log (1 - t)$ we obtain
$$\log(1-c/x) = -\frac{c}{x} - \frac{c^2}{2 x^2 (1-\xi')^2}  = - \frac{c}{x_0} + \frac{c}{(\xi'')^2}(x-x_0) - \frac{c^2}{2 x^2 (1-\xi')^2}$$
for any $c < x$, $0 < \xi' < c/x$, and $\xi''$ between $x$ and $x_0$, which in turn yields the inequality
$$\log(1-c/x) \geq -\frac{c}{x_0} - \frac{c}{x^2} (x_0-x)^+ - \frac{c^2}{2(x- c)^2}.$$
It follows that 
\begin{align*}
\prod_{i=1}^{m} \left( 1- \frac{z_i}{x} \right)^{y_i} &\geq \exp\left\{ -\sum_{i=1}^{m} \left( \frac{y_i z_i}{x_0} + \frac{y_i z_i}{x^2} (x_0-x)^+ + \frac{y_i z_i^2}{2(x- z_i)^2} \right) \right\}  \\
&\geq \exp\left\{ - \frac{1}{x_0} \sum_{i=1}^{m} y_i z_i \right\} -  \exp\left\{ - \frac{1}{x_0} \sum_{i=1}^{m} y_i z_i \right\}  \sum_{i=1}^{m} \left( \frac{y_i z_i}{x^2} (x_0-x)^+ + \frac{y_i z_i^2}{2(x- z_i)^2} \right)  \\
&\geq \exp\left\{ - \frac{1}{x_0} \sum_{i=1}^{m} y_i z_i \right\} - \frac{x_0}{x^2} (x_0 - x)^+ - \frac{x_0}{2} \max_{1\leq i \leq m} \frac{z_i}{(x-z_i)^2} .
\end{align*}
\end{proof}

We are now ready to prove Proposition~\ref{P.HopCount}. 

\begin{proof}[Proof of Proposition~\ref{P.HopCount}]
Let $0 < \gamma < \min\{\kappa \delta, \varepsilon\}$, and construct the pairs of processes 
$\{ (\Zout_i, \tbtZout_i): i \geq 0 \}$ and $\{ (\Zin_i, \tbtZin_i): i \geq 0\}$ according to the 
coupling described in Section \ref{SS.Coupling}, independently of each other.  Now define the event
\[
	\mathcal{E}_k = \bigcap_{m=1}^{\lceil k/2 \rceil+1} \left\{ 
	\hat Z_m^+ \left(1 -  n^{-\gamma} \right) \leq Z^+_m 
	\leq \hat Z_m^+ \left(1 + n^{-\gamma} \right), \, 
	\hat Z_m^- \left(1 - n^{-\gamma} \right) \leq Z^-_m 
	\leq \hat Z_m^- \left(1 + n^{-\gamma} \right) \right\}
\]
and note that since $\{(\Zout_i, \hat Z_i^{+}): i \geq 0 \}$ and 
$\{(\Zin_i, \hat Z_i^-): i \geq 0\}$ are independent, then Corollary~\ref{C.MainCoupling} gives 
$\mathbb{P}_n (\mathcal{E}_k^c) = O(n^{-a_1})$ for some $a_1 > 0$.  

Next, use the triangle inequality to get
\begin{align}
	&\left| \mathbb{P}_n( H_n > k) - E\left[\exp\left\{ -\frac{\sum_{i=2}^{k+1} 
		\tbtZout_{\ceil{i/2}} \tbtZin_{\floor{i/2}}}{\nu n} \right\} \right] \right| \notag \\
	&\leq \left| \mathbb{P}_n( H_n > k) -  \mathbb{E}_n \left[\exp\left\{-\frac{\sum_{i=2}^{k+1} 
		\Zout_{\ceil{i/2}} \Zin_{\floor{i/2}}}{\nu n} \right\} \right] \right| 
		\label{eq:hopcount_triangle1}\\
	&\hspace{5mm} + \left| \mathbb{E}_n \left[ \exp\left\{ - \frac{\sum_{i=2}^{k+1} 
		\Zout_{\ceil{i/2}} \Zin_{\floor{i/2}}}{\nu n} \right\} \right] - E\left[ \exp\left\{ 
		-\frac{\sum_{i=2}^{k+1} \tbtZout_{\ceil{i/2}} \tbtZin_{\floor{i/2}}}{\nu n} \right\}  
		\right] \right|. \label{eq:hopcount_triangle2}
\end{align}

We start by bounding~\eqref{eq:hopcount_triangle2}, for which we use the independence of 
$\{ \hat Z_i^+\}$ and $\{ \hat Z_i^-\}$ from the bi-degree sequence $({\bf D}_n^-, {\bf D}_n^+)$, and 
the inequality $| e^{-x} - e^{-y} | \leq e^{-(x \wedge y)} |x-y|$ for $x,y \geq 0$ to obtain
\begin{align*}
&\left|\mathbb{E}_n \left[\exp\left\{ -\frac{\sum_{i=2}^{k+1} \Zout_{\ceil{i/2}} 
	\Zin_{\floor{i/2}}}{\nu n} \right\} \right] - E\left[ \exp\left\{ -\frac{\sum_{i=2}^{k+1} 
	\tbtZout_{\ceil{i/2}} \tbtZin_{\floor{i/2}}}{\nu n} \right\}  \right] \right| \\
&\leq \mathbb{E}_n \left[ \left| \exp\left\{-\frac{\sum_{i=2}^{k+1} \Zout_{\ceil{i/2}} 
	\Zin_{\floor{i/2}}}{\nu n} \right\} - \exp\left\{-\frac{\sum_{i=2}^{k+1} \tbtZout_{\ceil{i/2}} 
	\tbtZin_{\floor{i/2}}}{\nu n} \right\} \right| \right]  \\
&\leq \frac{1}{n\nu} \mathbb{E}_n\left[ 1(\mathcal{E}_k) 
	\exp\left\{-\frac{S_k (1 - n^{-\gamma})^2}{\nu n} \right\} \left| \sum_{i=2}^{k+1} 
	\left(\Zout_{\ceil{i/2}} \Zin_{\floor{i/2}} -  \tbtZout_{\ceil{i/2}} \tbtZin_{\floor{i/2}} 
	\right) \right| \right] + \mathbb{P}_n (\mathcal{E}_k^c),
\end{align*}
where $S_k = \sum_{i=2}^{k+1}  Z^+_{\ceil{i/2}}  Z^-_{\floor{i/2}}$. Since on the event 
$\mathcal{E}_k$ we have that for all $2 \leq i \leq k+1$,
\[
	(1 - 2n^{-\gamma} + n^{-2\gamma}) \tbtZout_{\ceil{i/2}} \tbtZin_{\floor{i/2}} \leq 
	\Zout_{\ceil{i/2}} \Zin_{\floor{i/2}} \leq (1+2 n^{-\gamma} + n^{-2\gamma}) 
	\tbtZout_{\ceil{i/2}} \tbtZin_{\floor{i/2}},
\]
then for all $n \geq 1$,
\[ 
	\left| \Zout_{\ceil{i/2}} \Zin_{\floor{i/2}} - \tbtZout_{\ceil{i/2}} \tbtZin_{\floor{i/2}} 
	\right| \leq 3 n^{-\gamma} \tbtZout_{\ceil{i/2}} \tbtZin_{\floor{i/2}} \leq \frac{3n^{-\gamma}}{(1-n^{-\gamma})^2} Z_{\lceil i/2 \rceil}^+ Z_{\lfloor i/2 \rfloor}.
\]

It follows that \eqref{eq:hopcount_triangle2} is bounded from above by
\begin{align*}
	\frac{3 n^{-\gamma}}{(1-n^{-\gamma})^2}  \mathbb{E}_n \left[ 
		\exp\left\{ - \frac{S_k (1-n^{-\gamma})^2}{\nu n}  \right\}  \frac{1}{n\nu} S_k  \right] 
		+ \mathbb{P}_n (\mathcal{E}_k^c) \leq \frac{3 e^{-1} n^{-\gamma}}{(1-n^{-\gamma})^4}  
		+ \mathbb{P}_n(\mathcal{E}_k^c),
\end{align*}
where we used the observation that $\sup_{x \geq 0} e^{-x} x = e^{-1}$. 

We now proceed to bound~\eqref{eq:hopcount_triangle1}. From \eqref{eq:hopcount_main}, we have that
\[
	\mathbb{P}_n(H_n > k) = \mathbb{E}_n \left[ 1(\mathscr{S}_{k+1} \leq L_n) \prod_{i=2}^{k+1} 
	\prod_{s=0}^{\Zout_{\ceil{i/2}} - 1} \left( 1 - \frac{\Zin_{\floor{i/2}}}{L_n  - \mathscr{S}_{i-2} - s} 
	\right) \right],
\]
where $\mathscr{S}_k$ is as in~\eqref{eq:Bk}. Recall that from Assumption \ref{A.Wasserstein} we have 
$|L_n - n\nu| \leq n^{1-\varepsilon}$, and therefore $\{ \mathscr{S}_{k+1} \leq n^{b} \} \subseteq 
\{ \mathscr{S}_{k+1} \leq L_n\}$ for any $1-\delta < b < 1$ and all sufficiently large $n$. Now note that 
\begin{align*}
	\mathbb{E}_n \left[ 1(\mathscr{S}_{k+1} \leq n^{b}) \prod_{i=2}^{k+1} \left(1 - 
		\frac{\Zin_{\floor{i/2}}}{L_n - \mathscr{S}_{k+1}} \right)^{\Zout_{\ceil{i/2}}} \right] 
	&\leq \mathbb{P}_n(H_n > k) \leq \mathbb{E}_n \left[ \prod_{i=2}^{k+1} \left(1 - 
		\frac{\Zin_{\floor{i/2}}}{L_n} \right)^{\Zout_{\ceil{i/2}}} \right].
\end{align*}

Using Lemma \ref{L.Taylor} with $x = L_n$ and $x_0 = \nu n$ gives
\[
	\mathbb{P}_n(H_n > k) \leq \mathbb{E}_n \left[ e^{-S_k /(\nu n) } \right] 
	+ \frac{|L_n - \nu n|}{L_n \wedge (\nu n)} = \mathbb{E}_n \left[ e^{-S_k /(\nu n) } \right] 
	+ O(n^{-\varepsilon}).
\]
Similarly, using Lemma \ref{L.Taylor} with $x = L_n - \mathscr{S}_{k+1}$ and $x_0 = \nu n$ we obtain
\begin{align*}
	\mathbb{P}_n( H_n > k) &\geq \mathbb{E}_n\left[ e^{-S_k/(\nu n)} 1(\mathscr{S}_{k+1} \leq n^{b}) \right] \\
	&\hspace{5mm} -\mathbb{E}_n\left[ 1(\mathscr{S}_{k+1} \leq n^{b}) \frac{\nu n}{(L_n - \mathscr{S}_{k+1})^2} 
		(\nu n - L_n + \mathscr{S}_{k+1})^+ \right] \\
	&\hspace{5mm} - \mathbb{E}_n\left[ 1(\mathscr{S}_{k+1} \leq n^{b}) \frac{\nu n}{2} \max_{2 \leq i \leq k+1} 
		\frac{\Zin_{\floor{i/2}}}{(L_n - \mathscr{S}_{k+1} - \Zin_{\floor{i/2}})^2} \right] \\
	&\geq  \mathbb{E}_n\left[ e^{-S_k/(\nu n)} \right] - \mathbb{P}_n(\mathscr{S}_{k+1} > n^{b}) 
		- \frac{(n^{1-\varepsilon}+ n^{b})^+}{(L_n - n^{b})^2} 
		- \frac{\nu n}{2} \cdot \frac{n^{b}}{(L_n - 2n^{b})^2} \\
	&= \mathbb{E}_n\left[ e^{-S_k/(\nu n)} \right] - \mathbb{P}_n(\mathscr{S}_{k+1} > n^{b}) - O( n^{-(1-b)} ).
\end{align*}
Finally, note that using Markov's inequality we obtain
\begin{align*}
	\mathbb{P}_n( \mathscr{S}_{k+1} > n^{b}) &\leq \mathbb{P}_n(\mathcal{E}_k^c) 
		+ \mathbb{P}_n( \mathscr{S}_{k+1} > n^{b}, \mathcal{E}_k) \\
	&\leq \mathbb{P}_n(\mathcal{E}_k^c) + P\left( \sum_{j=1}^{\ceil{k/2}} 
		(\tbtZout_j + \tbtZin_j)(1 + n^{-\gamma}) > n^{b} \right) \\
	&\leq \mathbb{P}_n(\mathcal{E}_k^c) + \frac{1 + n^{-\gamma}}{n^{b}} \sum_{j=1}^{\ceil{k/2}} 
		E\left[ \tbtZout_j + \tbtZin_j \right] = O( n^{-a_1} + \mu^{\ceil{k/2}} n^{-b}),
\end{align*}
where in the last step we used the observation that $\sum_{j=1}^{\ceil{k/2}} E[ \tbtZout_j + 
\tbtZin_j]  = 2 \sum_{j=1}^{\ceil{k/2}} \nu \mu^{j-1} = O( \mu^{\ceil{k/2}})$. Since $k \leq 
2(1-\delta)\log n/(\log \mu)$, then $\mu^{\ceil{k/2}} = O(n^{1-\delta})$ and the result follows.
\end{proof}

We now proceed to prove Proposition~\ref{P.BPapprox}, which shows that the expression derived for the hopcount in Proposition~\ref{P.HopCount} can be closely approximated by an expression in terms of the limiting martingales of the branching processes $\{\hat Z_k^+: k \geq 0\}$ and $\{\hat Z_k^-: k \geq 0\}$. Note that this result is independent of the bi-degree sequence $({\bf D}_n^-, {\bf D}_n^+)$, since it involves only the coupled branching processes.

\begin{proof}[Proof of Proposition~\ref{P.BPapprox}]
Fix $0 < \epsilon < \kappa/(2+2\kappa)$  and set $m_n = \floor{(1-\epsilon) \log n/\log \mu}$. We 
start by noting that for $1 \leq k \leq m_n$ the inequality $|e^{-x} - e^{-y}| \leq |x - y|$ for 
$x,y \geq 0$ gives
\begin{align*}
	&\left| E\left[ \exp\left\{-\frac{1}{\nu n} \sum_{i=2}^{k+1} \tbtZout_{\ceil{i/2}} 
		\tbtZin_{\floor{i/2}} \right\} - \exp\left\{-\frac{\nu \mu^{k}}{(\mu-1)n} W^+ W^- \right\} 
		\right] \right| \\
	&\leq E\left[ \left| \frac{1}{\nu n} \sum_{i=2}^{k+1} \tbtZout_{\ceil{i/2}} \tbtZin_{\floor{i/2}} 
		- \frac{\nu \mu^{k}}{(\mu-1)n} W^- W^+ \right| \right] \\
	&\leq \frac{1}{\nu n} \sum_{i=2}^{k+1} E\left[ \tbtZout_{\ceil{i/2}}\right] 
		E\left[ \tbtZin_{\floor{i/2}} \right] + \frac{\nu \mu^{k}}{(\mu-1)n} E[W^+] E[W^-] \\
	&= \frac{1}{\nu n} \sum_{i=2}^{k+1} \nu^2 \mu^{\ceil{i/2} + \floor{i/2} - 2} 
		+ \frac{\nu \mu^{k}}{(\mu-1)n} E[W^+] E[W^-]  \\
	&= \frac{\nu}{\mu^2n} \left( \sum_{i=2}^{k+1} \mu^i + \frac{\mu^{k+2}}{\mu-1} \right) 
		\leq \frac{2\nu \mu^{k}}{n(\mu-1)} \leq \frac{2\nu \mu^{m_n}}{n(\mu-1)} 
		= O\left( n^{-\epsilon} \right)
\end{align*}
as $n \to \infty$, where in the second equality we used the observation that $\ceil{i/2} + 
\floor{i/2} = i$ for all $i \in \mathbb{N}$, and $E[W^-] = E[W^+] = 1$ (since $f^+$ and $f^-$ have 
finite moments of order $1+\kappa$). It remains to consider the case $k > m_n$. 

Suppose from now on that $k > m_n$ and note that 
\[
	\frac{\mu^{k} }{\mu-1}  = \sum_{i=m_n}^{k+1} \mu^{i - 2} + \frac{\mu^{m_n - 2}}{\mu-1},
\]
and therefore, using $|e^{-x} - e^{-y}| \leq |x-y|$ for $x,y \geq 0$ and the triangle inequality 
we obtain
\begin{align}
	&\left| E\left[ \exp\left\{-\frac{1}{\nu n} \sum_{i=2}^{k+1} \tbtZout_{\ceil{i/2}} 
		\tbtZin_{\floor{i/2}} \right\} - \exp\left\{-\frac{\nu \mu^{k}}{(\mu-1)n} W^+ W^- \right\} 
		\right] \right| \notag \\
	&\leq  E\left[ \left| \exp\left\{-\frac{1}{\nu n} \sum_{i=m_n}^{k+1} \tbtZout_{\ceil{i/2}} 
		\tbtZin_{\floor{i/2}} \right\} - \exp\left\{-\frac{\nu}{n} \sum_{i=m_n}^{k+1} \mu^{i - 2} W^+
		W^- \right\} \right| \right]  \label{eq:TwoExp} \\
	&\hspace{5mm} + E\left[ \frac{1}{\nu n} \sum_{i=2}^{m_n-1}  \tbtZout_{\ceil{i/2}} 
		\tbtZin_{\floor{i/2}} + \frac{\nu \mu^{m_n - 2}}{(\mu-1)n} W^+ W^- \right], \label{eq:SmallKs}
\end{align}
where $\eqref{eq:SmallKs}$ is of order $O(n^{-\epsilon})$ as shown above. To bound 
\eqref{eq:TwoExp}, let $W_k^\pm = \hat Z_k^\pm/(\nu \mu^{k-1})$, set $\eta = \epsilon/2$, and 
define the events $\mathcal{B} = \{ W^+ W^- > 0\}$,
\[
	\mathcal{C}_r^\pm = \left\{  \max_{\lfloor m_n/2 \rfloor \leq j \leq r} 1(W_j^\pm > 0) 
	|W_j^\pm - W^\pm|/ W_j^\pm \leq n^{-\eta} \right\}, \quad \text{for } r \geq \lfloor m_n/2 \rfloor,
\]
and $\mathcal{C}_k = \mathcal{C}_{\lfloor (k+1)/2 \rfloor}^+ \cap \mathcal{C}_{\lceil (k+1)/2 \rceil}^-$. Next, use the 
inequality $|e ^{-y} - e^{-x}| \leq e^{-(x \wedge y)} |x -y|$ for $x,y \geq 0$ to obtain
\begin{align*}
	& E\left[ 1(\mathcal{B} \cap \mathcal{C}_k) \left| \exp\left\{ - \frac{1}{\nu n}  \sum_{i = m_n}^{k+1}  
		\tbtZout_{\ceil{i/2}} \tbtZin_{\floor{i/2}} \right\} - \exp\left\{-\frac{\nu}{n} 
		\sum_{i = m_n}^{k+1} \mu^{i - 2} W^+ W^-  \right\} \right| \right] \\
	&\leq E\left[ 1(\mathcal{B} \cap \mathcal{C}_k) \exp\left\{-\frac{\nu}{\mu^2 n}  \sum_{i = m_n}^{k+1} 
		\mu^i W_{\ceil{i/2}}^+ W_{\floor{i/2}}^- (1-n^{-\eta}) \right\} \right. \\
	&\hspace{15mm} \left. \times \left| \frac{\nu}{\mu^2 n}  \sum_{i = m_n}^{k+1} \mu^i  \left( 
		W_{\ceil{i/2}}^+ W_{\floor{i/2}}^-  - W^+ W^- \right)   \right| \right] \\
	&\leq \frac{3 \nu n^{-\eta}}{\mu^2 n} E\left[ \exp\left\{ -\frac{\nu}{\mu^2 n} 
		\sum_{i = m_n}^{k+1} \mu^i W_{\ceil{i/2}}^+ W_{\floor{i/2}}^- (1-n^{-\eta}) \right\} 
		\sum_{i = m_n}^{k+1} \mu^i W_{\ceil{i/2}}^+ W_{\floor{i/2}}^- \right] \\
	&\leq \frac{3 n^{-\eta}}{\mu^2(1-n^{-\eta})} \sup_{x > 0} e^{-x} x \leq \frac{3 n^{-\eta}}{\mu^2(1-n^{-\eta})},
\end{align*}
where we have used the observation that on the event $\mathcal{B} \cap \mathcal{C}_k$ we have
\begin{align*}
	\left| W_{\ceil{i/2}}^+ W_{\floor{i/2}}^-  - W^+ W^-\right| 
	&\leq \left| W_{\ceil{i/2}}^+ - W^+ \right| W_{\floor{i/2}}^- 
		+ W^+ \left| W_{\floor{i/2}}^- - W^- \right| \\
	&\leq W_{\ceil{i/2}}^+ W_{\floor{i/2}}^- n^{-\eta} 
		+ (1+ n^{-\eta}) n^{-\eta} W_{\ceil{i/2}}^+ W_{\floor{i/2}}^- \\
	&\leq 3 n^{-\eta} W_{\ceil{i/2}}^+ W_{\floor{i/2}}^-,
\end{align*}
and that $\sup_{x \geq 0} x e^{-x} = e^{-1}$. Also, by using that $|e^{-x} - e^{-y}| \leq 1$ for 
$x,y \geq 0$ we obtain
\begin{align*}
	&E\left[ 1(\mathcal{B} \cap \mathcal{C}_k^c) \left| \exp\left\{ - \frac{1}{\nu n}  \sum_{i = m_n}^{k+1}  
		\tbtZout_{\ceil{i/2}} \tbtZin_{\floor{i/2}} \right\} - \exp\left\{-\frac{\nu}{\mu^2 n} 
		\sum_{i = m_n}^{k+1} \mu^{i} W^+ W^- \right\} \right| \right] \\
	&\leq P\left(W^+ > 0, ( \mathcal{C}_{\ceil{(k+1)/2}}^+)^c\right) 
		+ P\left(W^- > 0, \, ( \mathcal{C}_{\floor{(k+1)/2}}^-)^c\right). 
\end{align*}

To bound the last two probabilities set $u = \mu^{\epsilon (2+\kappa)/(\kappa -\kappa\epsilon)} 
\in (1,\mu)$, define the event $\mathcal{D}^\pm = \{ \inf_{j \geq \lfloor m_n/2 \rfloor} 
\hat Z_j^\pm/ u^{j} \geq 1 \}$, and note that for any $r \geq \floor{m_n/2}$,
\begin{align*}
	P\left(W^\pm > 0, \, (\mathcal{C}_r^\pm)^c \right) 
	&\leq P(W^\pm > 0, \, (\mathcal{C}_r^\pm)^c \cap \mathcal{D}^\pm ) + P(W^\pm > 0, (\mathcal{D}^\pm)^c) \\
	&\leq \sum_{j = \floor{m_n/2}}^r P( |W_j^\pm - W^\pm| > n^{-\eta} W_j^\pm, \, \mathcal{D}^\pm) \\
	&\hspace{10pt}+ P(W^\pm > 0, \, (\mathcal{D}^\pm)^c).
\end{align*}
By Lemma \ref{L.Martingale} we have that
\[
	P(W^\pm > 0, (\mathcal{D}^\pm)^c) \leq Q_1 \left( u^{-\kappa \floor{m_n/2}} 
	+ (u/\mu)^{\alpha^\pm \floor{m_n/2}} 1(q^\pm > 0) \right),
\]
for some constant $Q_1 < \infty$, where $\lambda^\pm = \sum_{i=1}^\infty i f^\pm(i) (q^\pm)^{i-1} \in 
[0, 1)$ and $\alpha^\pm = |\log \lambda^\pm|/\log\mu > 0$ if $q^\pm > 0$; and for the remaining 
probabilities we use the representation \eqref{eq:ConvergenceRate} for $W^\pm - W_j^\pm$ and Lemma 
\ref{L.Burkholder}, applied conditionally on $\hat Z_j^\pm$, to obtain
\begin{align*}
	P( |W_j^\pm - W^\pm| > n^{-\eta} W_j^\pm, \, \mathcal{D}^\pm) 
	&\leq P\left( \left| \sum_{{\bf i} \in \hat A_j^\pm} (\mathcal{W}_{\bf i}^\pm -1) \right| > 
		n^{-\eta} \hat Z_j^\pm, \, \hat Z_j^\pm \geq u^j \right) \\
	&\leq E\left[ 1(\hat Z_j^\pm \geq u^j) P\left( \left( \left| \sum_{{\bf i} \in \hat A_j^\pm} 
		(\mathcal{W}_{\bf i}^\pm -1) \right| > n^{-\eta} \hat Z_j^\pm \right| \hat Z_j^\pm \right) 
		\right] \\
	&\leq Q_{1+\kappa} E[|\mathcal{W}^\pm -1|^{1+\kappa}] \cdot E\left[ 1(\hat Z_j^\pm \geq u^j) 
		\frac{\hat Z_j^\pm}{(\hat Z_j^\pm n^{-\eta})^{1+\kappa} }  \right] \\
	&\leq Q_{1+\kappa} E[|\mathcal{W}^\pm-1|^{1+\kappa}] \frac{n^{\eta(1+\kappa)}}{u^{\kappa j}},
\end{align*}
where $\{ \mathcal{W}_{\bf i}^\pm\}$ are i.i.d.~random variables having the same distribution as 
$\mathcal{W}^\pm = \lim_{r \to \infty} \mathcal{Z}^\pm_r/\mu^r$, and $Q_{1+\kappa}$ is a finite 
constant that depends only on $\kappa$. It follows from our choice of $u$ and $\eta$ that
\begin{align*}
	P(W^\pm > 0, \, (\mathcal{C}_r^\pm)^c) &= O\left(  \sum_{j = \floor{m/2}}^r  
		\frac{n^{\eta(1+\kappa)}}{u^{\kappa j}} + u^{-\kappa \floor{m/2}} 
		+ (u/\mu)^{\alpha^\pm \floor{m/2}} 1(q^\pm > 0) \right) \\
	&= O\left( n^{\eta(1+\kappa)} u^{-\kappa \floor{m/2}} 
		+ (u/\mu)^{\alpha^\pm \floor{m/2}} 1(q^\pm  > 0) \right) \\
	&= O \left( n^{-\eta} + n^{-\alpha^\pm (1/2 - \epsilon(1+\kappa)/\kappa)} 1(q^\pm > 0) \right).
\end{align*}
Hence, as $n \to \infty$, 
\begin{align*}
	&E\left[ 1(\mathcal{B}) \left| \exp\left\{ - \frac{1}{\nu n}  \sum_{i = m_n}^{k+1}  
		\tbtZout_{\ceil{i/2}} \tbtZin_{\floor{i/2}} \right\} - \exp\left\{ -\frac{\nu}{n}  
		\sum_{i = m_n}^{k+1} \mu^{i} W^+ W^-  \right\}   \right| \right]  \\
	&= O \left( n^{-\eta} + n^{-\alpha^+ (1/2 - \epsilon(1+\kappa)/\kappa)} 1(q^+ > 0)  
		+ n^{-\alpha^- (1/2 - \epsilon(1+\kappa)/\kappa)} 1(q^- > 0)  \right).
\end{align*}

Finally, on the event $\mathcal{B}^c$ we have 
\begin{align}
	&E\left[ 1(\mathcal{B}^c) \left| \exp\left\{-\frac{1}{\nu n}  \sum_{i = m_n}^{k+1}  
		\tbtZout_{\ceil{i/2}} \tbtZin_{\floor{i/2}} \right\} - \exp\left\{-\frac{\nu}{n}  
		\sum_{i = m_n}^{k+1} \mu^{i} W^- W^+  \right\}   \right| \right] \label{eq:Extinction} \\
	&=  E\left[ 1\left(\mathcal{B}^c, \, \tbtZout_{\ceil{m_n/2}} \tbtZin_{\floor{m_n/2}} > 0 \right) 
		\left| \exp\left\{-\frac{1}{\nu n} \sum_{i = m_n}^{k+1}  \tbtZout_{\ceil{i/2}} 
		\tbtZin_{\floor{i/2}} \right\} - 1 \right| \right]  \notag \\
	&\leq P\left( \mathcal{B}^c, \, \tbtZout_{\ceil{m_n/2}} \tbtZin_{\floor{m_n/2}} > 0 \right) 
		\notag  \\
	&\leq P\left( W^+ = 0, \,  \tbtZout_{\ceil{m_n/2}} > 0 \right) 
		+ P\left( W^- = 0, \, \tbtZin_{\floor{m_n/2}} > 0 \right). \notag
\end{align}
By Lemma \ref{L.Extinction}, conditionally on $W^\pm = 0$, $\{ \hat Z_k^\pm: k \geq 1\}$ is a 
delayed branching process with offspring distributions $(\tilde g^\pm, \tilde f^\pm)$, as defined 
in the lemma, having means $\nu^\pm$ and $\lambda^\pm < 1$, respectively. Moreover, by Theorem 4 in \cite{Athreya2012}, $W^\pm = 0$ implies that either $q^\pm > 0$ or $\hat Z_1^\pm = 0$. Therefore, from Markov's
inequality we obtain
\begin{align*}
	P\left( W^\pm = 0, \, \hat Z_{\floor{m_n/2}}^\pm > 0 \right) 
	&\leq E\left[ \left. \hat Z_{\floor{m_n/2}}^\pm \right| W^\pm = 0 \right]  1(q^\pm > 0) 
		= \nu^\pm (\lambda^\pm)^{\floor{m_n/2} - 1} 1(q^\pm > 0) \\
	&= O\left( n^{-(1-\epsilon)|\log \lambda^\pm|/\log \mu} 1(q^\pm > 0)  \right) 
		= O\left( n^{-(1-\epsilon) \alpha^\pm} 1(q^\pm > 0) \right).
\end{align*}

We conclude that as $n \to \infty$,
\begin{align*}
	&E\left[ \left| \exp\left\{ - \frac{1}{\nu n}  \sum_{i = m_n}^{k+1}  \tbtZout_{\ceil{i/2}} 
		\tbtZin_{\floor{i/2}} \right\} - \exp\left\{-\frac{\nu}{n}  \sum_{i = m_n}^{k+1} 
		\mu^{i} W^+ W^-  \right\} \right| \right] \\
	&= O\left( n^{-\epsilon/2} + n^{-\alpha^+ (1/2 - \epsilon(1+\kappa)/\kappa)} 1(q^+ > 0)  
		+ n^{-\alpha^- (1/2 - \epsilon(1+\kappa)/\kappa)} 1(q^- > 0)   \right).
\end{align*}
\end{proof}

The last proof in this section is that of Corollary \ref{C.FiniteHopcount}, which computes an expression for the probability that two randomly chosen nodes are connected by a directed path.

\begin{proof}[Proof of Corollary~\ref{C.FiniteHopcount}]
Fix $0 < \delta < 1/4$ and set $\omega_n = 2(1-\delta)\log n/(\log \mu)$. Our analysis is based on splitting the probability that the hopcount is finite into two terms:
$$\mathbb{P}_n(H_n < \infty) = \mathbb{P}_n(H_n \le \omega_n) + \mathbb{P}_n (\omega_n < H_n < \infty),$$
where for the first probability we will use the approximation provided by Theorem~\ref{T.MainTheoremBody}. Intuitively, the second term corresponds to an event that should be negligible in the limit, since it is unlikely that if there exists a directed path between the two randomly chosen nodes it will not have been discovered after $\omega_n$ steps of the exploration process. 

First, note that from Lemma \ref{L.Extinction} we have
\[
	s^\pm = 1 - \sum_{t=0}^\infty g^\pm(t) (q^\pm)^t \quad \text{with} \quad 
	q^\pm = \Prob{\mathcal{Z}^\pm_k = 0 \text{ for some } k \geq 1},
\] 
and since $W^+$ and $W^-$ are independent, $s^+ s^- = P(W^+ W^- > 0)$.  As in the previous proof, let $\mathcal{B} = \{ W^+ W^- > 0\}$  and write
\[
	E\left[ \exp \left\{ - \frac{\nu \mu^{k}}{(\mu-1)n} W^+ W^- \right\} \right] 
	= E\left[ \exp \left\{ - \frac{\nu \mu^{k}}{(\mu-1)n} W^+ W^- \right\} 1(\mathcal{B})\right] 
	+ P(\mathcal{B}^c).
\]
Next, use the triangle inequality followed by an application of Theorem \ref{T.MainTheoremBody} to obtain
\begin{align*}
	\left| \mathbb{P}_n(H_n < \infty) - s^+ s^- \right| 
	&= \left| \mathbb{P}_n( H_n \leq \omega_n) + \mathbb{P}_n( \omega_n < H_n < \infty) - P(\mathcal{B}) \right| \\
	&\leq \left| \mathbb{P}_n( H_n \leq \omega_n) 
	- P(\mathcal{B}) \right| + \mathbb{P}_n(\omega_n < H_n < \infty)  \\
	&\leq \left| \mathbb{P}_n(H_n \leq \omega_n) - 1 
		+ E\left[ \exp\left\{-\frac{\nu \mu^{\omega_n}}{(\mu-1)n} W^+ W^- \right\} \right] \right| \\
	&\hspace{10pt}+ \left| 1 - E\left[ \exp \left\{ - \frac{\nu \mu^{\omega_n}}{(\mu-1)n} W^+ W^- 
		\right\} \right]  - P(\mathcal{B}) \right| +  \mathbb{P}_n(\omega_n < H_n < \infty) \\
	&\leq E\left[ \exp \left\{-\frac{\nu \mu^{\omega_n}}{(\mu-1)n} W^+ W^- \right\} 1(\mathcal{B}) 
		\right] + \mathbb{P}_n(\omega_n < H_n < \infty) + O(n^{-c_1})
\end{align*}
for some $c_1 > 0$, as $n \to \infty$. 

To analyze $\mathbb{P}_n( \omega_n < H_n < \infty)$ use the expression in \eqref{eq:hopcount_main}  to see that 
for any $k \geq 0$, 
\begin{align}
	\mathbb{P}_n(k < H_n < \infty) \leq \mathbb{E}_n \left[ 1(\Zout_{\ceil{(k+1)/2}} 
		\Zin_{\floor{(k+1)/2}} > 0) \prod_{i=2}^{k+1}  \left(1 - \frac{\Zin_{\floor{i/2}}}{L_n} 
		\right)^{\Zout_{\ceil{i/2}}} \right].\label{eq:FiniteHopcountUB}
\end{align}
Note that the same steps in the proofs of Propositions \ref{P.HopCount} and \ref{P.BPapprox} give 
that \eqref{eq:FiniteHopcountUB} is equal to
\begin{align*}
	&E\left[ 1(\tbtZout_{\ceil{(k+1)/2}} \tbtZin_{\floor{(k+1)/2}} > 0) \exp\left\{-\frac{\nu 
		\mu^{k}}{(\mu-1)n} W^+ W^- \right\} \right] + O \left( n^{-a} + n^{-b} \right) \\
	&\leq E\left[ \exp\left\{ - \frac{\nu \mu^{k}}{(\mu-1)n} W^- W^+ \right\} 1(\mathcal{B}) 
		\right] + P\left( \tbtZout_{\ceil{(k+1)/2}} \tbtZin_{\floor{(k+1)/2}} > 0, \, \mathcal{B}^c\right) 
		+ O\left( n^{-a} + n^{-b} \right)
\end{align*}
for some constants $a, b > 0$ and all $0 \leq k \leq \omega_n$. It follows that
\begin{align*}
	\left| \mathbb{P}_n(H_n < \infty) - s^+ s^- \right| 
	&\leq 2 E\left[ \exp\left\{ - \frac{\nu n^{1-2\delta} }{\mu-1} W^+ W^- \right\} 
		1(\mathcal{B}) \right] \\
	&\hspace{5mm} + P\left( \tbtZout_{\ceil{(\omega_n+1)/2}} \tbtZin_{\floor{(\omega_n+1)/2}} > 0, 
		\, \mathcal{B}^c\right) + O\left( n^{-\min\{a, b, c_1\}} \right),
\end{align*}
where we have used the observation that $\mu^{\omega_n} \geq n^{2(1-\delta)}$.

Next, use Lemma \ref{L.Extinction} to see that conditionally on $W^\pm = 0$ the process 
$\{\hat Z^\pm_k: k \geq 1\}$ is a subcritical delayed branching process with offspring 
distributions $(\tilde g^\pm, \tilde f^\pm)$, defined in the lemma, having means $\nu^\pm$ and 
$\lambda^\pm < 1$, respectively. It then follows from the union bound and Markov's inequality that, 
\begin{align*}
	&P\left( \tbtZout_{\ceil{(\omega_n+1)/2}} \tbtZin_{\floor{(\omega_n+1)/2}} > 0, \, \mathcal{B}^c
		\right) \\
	&\leq P\left( \tbtZout_{\ceil{(\omega_n+1)/2}}  \geq 1, \, W^+ =0 \right) 
		+ P\left( \tbtZin_{\floor{(\omega_n+1)/2}}  \geq 1, \, W^- =0 \right) \\
	&\leq (1-s^+) \nu^+ (\lambda^+)^{\ceil{(\omega_n+1)/2} - 1} + (1-s^-) \nu^- (\lambda^-)^{
		\floor{(\omega_n+1)/2} - 1 } \\
	&= O\left( n^{-(1-\delta) \alpha^+ } 1(q^+ > 0) +  n^{-(1-\delta)\alpha^-} 1(q^- > 0) \right),
\end{align*}
where $\alpha^\pm = |\log \lambda^\pm|/\log\mu$ provided $q^\pm > 0$. 

Finally, to analyze the remaining expectation define the event $\mathcal{D} = \{ W^+ > n^{-1/4}, 
\, W^- > n^{-1/4}\}$ and note that
\begin{align*}
	E\left[ \exp\left\{-\frac{\nu n^{1-2\delta}}{\mu-1} W^+ W^- \right\} 1(\mathcal{B}) \right] 
	&\leq E\left[ \exp\left\{-\frac{\nu n^{1-2\delta}}{\mu-1} W^+ W^- \right\} 1(\mathcal{D}) 
		\right] + P(\mathcal{B} \cap \mathcal{D}^c) \\
	&\leq \exp\left\{-\frac{\nu}{(\mu-1)} \cdot n^{1/2 - 2\delta} \right\} 
		+ P(\mathcal{B} \cap \mathcal{D}^c) \\
	&\leq P(0 < W^+ \leq n^{-1/4}) + P(0 < W^- \leq n^{-1/4}) + o(n^{-1}). 
\end{align*}
The proof of Lemma \ref{L.Martingale} gives that $P(0 < W^\pm \leq n^{-1/4}) = O( n^{-1} 
1(q^\pm =0) + n^{-\alpha^\pm/4} 1(q^\pm > 0))$, which completes the proof. 
\end{proof}


\subsection{The i.i.d.~algorithm} \label{SS.TechnicalLemmata}

This last section of the appendix contains the proof of Theorem \ref{T.IIDAlgorithm}, which shows that the i.i.d.~algorithm in Section \ref{SS.IIDAlgorithm} generates bi-degree sequences that satisfty Assumption \ref{A.Wasserstein} with high probability. 

\begin{proof}[Proof of Theorem \ref{T.IIDAlgorithm}]
With some abuse of notation with respect to the proofs in the previous sections, define the events $B_n = \left\{ d_1(G_n^+, G^+) \leq n^{-\varepsilon} , \, d_1(G_n^-, G^-) \leq n^{-\varepsilon} \right\}$ and $E_n = \left\{ \sum_{i=1}^n ((D_i^-)^\kappa + (D_i^+)^\kappa) D_i^+ D_i^- \leq K_\kappa n \right\}$. Assume, without loss of generality, that $K_\kappa > E[ ((\mathscr{D}^-)^\kappa + (\mathscr{D}^+)^\kappa) \mathscr{D}^+ \mathscr{D}^-] \triangleq H_\kappa$. Next, note that
$$P(\Omega_n^c) \leq P(B_n^c) + P(E_n^c \cap B_n) + P\left(\max\{ d_1(F_n^+, F^+) , \, d_1(F_n^-, F^-) \} > n^{-\varepsilon}, \, B_n \cap E_n \right).$$

We start by showing that $P(B_n^c) \to 0$ as $n \to \infty$. To this end, let $\hat G_n^-$ and $\hat G_n^+$ denote the empirical distribution functions of $\mathscr{D}^-_1, \dots \mathscr{D}^-_n$ and $\mathscr{D}^+_1, \dots, \mathscr{D}^+_n$, respectively; note that although $\hat G_n^\pm$ is well defined regardless of the value of $\Delta_n$, $G_n^\pm, F_n^\pm$ are only defined conditionally on the event $\Psi_n = \{ |\Delta_n| \leq n^{1-\delta} \}$. Furthermore, since $E[|\mathscr{D}^- - \mathscr{D}^+|^{1/(1-\delta) }] \leq \left( E[|\mathscr{D}^- - \mathscr{D}^+|^{1+\kappa}] \right)^{(1-\delta)/(1+\kappa)} < \infty$, the Kolmogorov-Marcinkiewicz-Zygmund strong law of large numbers gives
$$P\left( \lim_{n \to \infty} \frac{\Delta_n}{n^{1-\delta}} = 0 \right) = 1,$$
and hence, $P(\Psi_n) \to 1$ as $n \to \infty$. 

It follows from the triangle inequality and Theorem 2.2 in \cite{Barr_Gin_Mat_99} (see also Proposition 3 in \cite{Chen_Olv_15}), that
\begin{align*}
E\left[  d_1(G_n^\pm, G^\pm)  \right] &\leq E\left[ \left. d_1(G_n^\pm, \hat G_n^\pm) \right| \Psi_n  \right] + E\left[ \left. d_1(\hat G_n^\pm, G^\pm) \right| \Psi_n \right] \\
&\leq \frac{1}{P(\Psi_n)} \left( E\left[ d_1(G_n^\pm, \hat G_n^\pm) 1(\Psi_n) \right] + K_\delta \, n^{-\delta} \right)
\end{align*}
for some finite constant $K_\delta$. Moreover, on the event $\Psi_n$, 
\begin{align*}
d_1(G_n^+, \hat G_n^+) &= \frac{1}{n} \int_0^\infty \left| \sum_{i=1}^n  \left( 1(\mathscr{D}^-_i+ \tau_i \leq x) - 1(\mathscr{D}^-_i \leq x) \right) \right| dx = \frac{1}{n} \int_0^\infty \sum_{i=1}^n 1(\mathscr{D}^-_i \leq x < \mathscr{D}^-_i + \tau_i) \, dx \\
&= \frac{1}{n} \sum_{i=1}^\infty \tau_i \leq \frac{|\Delta_n|}{n} \leq n^{-\delta}. 
\end{align*}
Since the analysis of $d_1(G_n^-, \hat G_n^-)$ is the same, we obtain
$$E\left[ \left. d_1(G_n^\pm, G^\pm) \right| \Psi_n \right]  \leq \frac{1}{P(\Psi_n)} \left( n^{-\delta} + K_\delta \, n^{-\delta} \right),$$
from which it follows that as $n \to \infty$,
$$P( B_n^c ) \leq n^\varepsilon \left( E\left[ d_1(G_n^+, G^+)  \right] + E\left[ d_1(G_n^-, G^-) \right] \right) = O\left( n^{-\delta + \varepsilon} \right).$$

Next, to analyze $P(E_n^c \cap B_n)$, note that $\tau_i \chi_i = 0$, and therefore
\begin{align*}
&\sum_{i=1}^n ((D_i^-)^\kappa + (D_i^+)^\kappa) D_i^+ D_i^- \\
&\leq \sum_{i=1}^n ( (\mathscr{D}^-_i+\tau_i)^\kappa + (\mathscr{D}^+_i+\chi_i)^\kappa) (\mathscr{D}^+_i + \chi_i) (\mathscr{D}^-_i+ \tau_i) \\
&\leq  \sum_{i=1}^n ( (\mathscr{D}^-_i)^\kappa +\tau_i  + (\mathscr{D}^+_i)^\kappa +\chi_i) (\mathscr{D}^+_i \mathscr{D}^-_i + \mathscr{D}^+_i \tau_i + \mathscr{D}^-_i \chi_i) \\
&= \sum_{i=1}^n \left\{ ( (\mathscr{D}^-_i)^\kappa  + (\mathscr{D}^+_i)^\kappa) \mathscr{D}^+_i \mathscr{D}^-_i +  \tau_i  (\mathscr{D}^+_i \mathscr{D}^-_i + \mathscr{D}^+_i  ) + \chi_i (\mathscr{D}^+_i \mathscr{D}^-_i + \mathscr{D}^-_i ) \right\} .
\end{align*}
Now set $Y_i = ( (\mathscr{D}^-)_i^\kappa  + (\mathscr{D}^+_i)^\kappa) \mathscr{D}^+_i \mathscr{D}^-_i - H_\kappa$ and $W_i = \tau_i  (\mathscr{D}^+_i \mathscr{D}^-_i + \mathscr{D}^+_i  ) + \chi_i (\mathscr{D}^+_i \mathscr{D}^-_i + \mathscr{D}^-_i )$  to obtain
\begin{align*}
P(E_n^c \cap B_n) &\leq P\left( \left. \sum_{i=1}^n Y_i + \sum_{i=1}^n W_i > (K_\kappa - H_\kappa) n, \, B_n \right| \Psi_n \right) \\
&\leq \frac{1}{P(\Psi_n)} P\left(  \sum_{i=1}^n Y_i > n(K_\kappa-H_\kappa)/2  \right)  + P\left(  \left. \sum_{i=1}^n W_i  > n(K_\kappa - H_\kappa)/2, \, B_n \right| \Psi_n \right).
\end{align*}
Since $n^{-1} \sum_{i=1}^n Y_i \to 0$ almost surely by the strong law of large numbers, the first probability converges to zero as $n \to \infty$. For the second probability use Markov's inequality to obtain
\begin{align*}
P\left(  \left. \sum_{i=1}^n W_i  > n(K_\kappa - H_\kappa)/2, \, B_n \right| \Psi_n \right) &\leq \frac{2E[ W_1 1(\Psi_n \cap B_n)]}{P(\Psi_n) (K_\kappa-H_\kappa)} .
\end{align*}
Now note that
\begin{align*}
E[W_1 1(\Psi_n \cap B_n)] &= E\left[  E[\tau_1 | \{(\mathscr{D}^-_i, \mathscr{D}^+_i)\}_{i=1}^n ] (\mathscr{D}^-_1\mathscr{D}^+_1 + \mathscr{D}^+_1) 1(\Psi_n \cap B_n) \right]  \\
&\hspace{5mm} +  E\left[ E[\chi_1 | \{(\mathscr{D}^-_i, \mathscr{D}^+_i)\}_{i=1}^n ] (\mathscr{D}^-_1 \mathscr{D}^+_1 + \mathscr{D}^-_1)  1(\Psi_n \cap B_n) \right] \\
&= E\left[ \frac{\Delta_n^-}{L_n}  (\mathscr{D}^-_1 \mathscr{D}^+_1 + \mathscr{D}^+_1) 1(\Psi_n \cap B_n) \right] + E\left[ \frac{\Delta_n^+}{L_n}  (\mathscr{D}^-_1 \mathscr{D}^+_1 + \mathscr{D}^-_1) 1(\Psi_n \cap B_n) \right] \\
&\leq n^{1-\delta} E\left[ \frac{(\mathscr{D}^-_1 \mathscr{D}^+_1 + \mathscr{D}^+_1 + \mathscr{D}^-_1)}{L_n} 1(B_n) \right] \leq \frac{n^{-\delta}}{\nu - n^{-\varepsilon}} E\left[ \mathscr{D}^-_1 \mathscr{D}^+_1 + \mathscr{D}^+_1 + \mathscr{D}^-_1 \right],
\end{align*}
where in the last step we used the observation that on the event $B_n$ we have $L_n = n \nu_n \geq n (\nu - n^{-\varepsilon})$, since $|\nu_n - \nu| \leq d_1(G_n^+, G^+)$. Hence, we have shown that $P(E_n^c \cap B_n) \to 0$ as $n \to \infty$. 

For the size-biased distributions note that
\begin{align*}
d_1(F_n^+, F^+) &\leq \int_0^\infty \left| \frac{1}{L_n} - \frac{1}{\nu n} \right|  \sum_{i=1}^n 1(D_i^- > x) D_i^+ \, dx + \frac{1}{n\nu }  \int_0^\infty \left|   \sum_{i=1}^n \left( 1(D_i^- > x) D_i^+ - 1(\mathscr{D}^-_i > x) \mathscr{D}^+_i \right)  \right|  dx \\
&\hspace{5mm} +  \int_0^\infty \left|  \frac{1}{\nu n} \sum_{i=1}^n 1(\mathscr{D}^-_i > x) \mathscr{D}^+_i - 1 + F^+(x) \right| dx \\
&=\left| \frac{\nu  - \nu_n}{\nu \nu_n} \right| \frac{1}{n}  D_i^- D_i^+ + \frac{1}{\nu n} \int_0^\infty \sum_{i=1}^n \left( 1(\mathscr{D}^-_i \leq x < \mathscr{D}^-_i + \tau_i) \mathscr{D}^+_i + 1(\mathscr{D}^-_i > x) \chi_i \right) dx \\
&\hspace{5mm} + \int_0^\infty \left| \frac{1}{n} \sum_{i=1}^n X_i^+(x) \right| dx,
\end{align*}
where $X_i^+(x) = 1(\mathscr{D}^-_i > x)\mathscr{D}^+_i/\nu - 1+F^+(x)$. Now recall that $|\nu_n - \nu| \leq d_1(G_n^+, G^+)$, 
and therefore, on the event $B_n \cap E_n$,  
\begin{align*}
 d_1(F_n^+, F^+) &\leq \left| \frac{\nu  - \nu_n}{\nu (\nu - n^{-\varepsilon}) } \right| K_\kappa +   \frac{L_n}{n^2 \nu (\nu - n^{-\varepsilon})}  \sum_{i=1}^n (\tau_i \mathscr{D}^+_i + \chi_i \mathscr{D}^-_i)  + \int_0^\infty \left| \frac{1}{n} \sum_{i=1}^n X_i^+(x) \right| dx.
\end{align*}

Since the case $d_1(F_n^-, F^-)$ is symmetric by setting $X_i^-(x) = 1(\mathscr{D}^+_i > x) \mathscr{D}^-_i/\nu - 1 + F^-(x)$,  it follows that
\begin{align*}
&P\left( d_1(F_n^\pm, F^\pm) > n^{-\varepsilon}, \, B_n \cap E_n \right) \\
&\leq \frac{1}{P(\Psi_n)} P\left( \left| \frac{\nu  - \nu_n}{\nu (\nu - n^{-\varepsilon}) } \right| K_\kappa +   \frac{1(\Psi_n) L_n}{n^2 \nu (\nu - n^{-\varepsilon})}  \sum_{i=1}^n (\tau_i \mathscr{D}^+_i + \chi_i \mathscr{D}^-_i)  + \int_0^\infty \left| \frac{1}{n} \sum_{i=1}^n X_i^\pm (x) \right| dx > n^{-\varepsilon} \right) \\
&\leq \frac{n^\varepsilon}{P(\Psi_n)} \left( \frac{K_\kappa E\left[ d_1(G_n^+, G^+) \right] }{\nu (\nu - n^{-\varepsilon})} + \frac{ E\left[  1(\Psi_n) L_n (\tau_1 \mathscr{D}^+_1 + \chi_1 \mathscr{D}^-_1) \right] }{n\nu(\nu - n^{-\varepsilon})} + \frac{1}{n} \int_0^\infty E\left[ \left| \sum_{i=1}^n X_i^\pm (x) \right| \right] dx  \right)  \\
&= O\left( n^{-\delta+\varepsilon} + \frac{E\left[ 1(\Psi_n)  L_n (\tau_1 \mathscr{D}^+_1 + \chi_1 \mathscr{D}^-_1) \right]}{n^{1-\varepsilon}} + \frac{1}{n^{1-\varepsilon}} \int_0^\infty E\left[ \left| \sum_{i=1}^n X_i^\pm (x) \right| \right] dx  \right).
\end{align*}
To bound the middle term in the last expression, note that
\begin{align*}
E\left[ 1(\Psi_n)  L_n (\tau_1 \mathscr{D}^+_1 + \chi_1 \mathscr{D}^-_1) \right] &= E\left[ 1(\Psi_n)  L_n ( E[\tau_1 |\{ \mathscr{D}^-_i, \mathscr{D}^+_i)\}_{i=1}^n] \mathscr{D}^+_1 + E[\chi_1 |\{ \mathscr{D}^-_i, \mathscr{D}^+_i)\}_{i=1}^n]  \mathscr{D}^-_1) \right] \\
&= E\left[ 1(\Psi_n)   ( |\Delta_n| \mathscr{D}^+_1 + |\Delta_n|  \mathscr{D}^-_1) \right] \leq n^{1-\delta} E[ \mathscr{D}^+ + \mathscr{D}^-].
\end{align*}

To complete the proof, choose $1/(1-\varepsilon) < p < 1+\kappa$, use the monotonicity of the norm $\| X \|_p = \left( E[ |X|^p ] \right)^{1/p}$ and apply Lemma \ref{L.Burkholder} to obtain
\begin{align*}
\frac{1}{n^{1-\varepsilon}} \int_0^\infty E\left[ \left| \sum_{i=1}^n X_i^\pm (x) \right| \right] dx &\leq \frac{1}{n^{1-\varepsilon}} \int_0^\infty  \left\| \sum_{i=1}^n X_i^\pm (x) \right\|_{p} dx \\
&\leq \frac{1}{n^{1-\varepsilon}}   \int_0^{\infty} \left( Q_p n E\left[ \left| X_1^\pm (x) \right|^p  \right] \right)^{1/p} dx \\
&= \frac{(Q_p)^{1/p}}{\nu} n^{1/p -1 + \varepsilon} \int_0^\infty \left\| \nu X_1^\pm(x) \right\|_p dx ,
\end{align*}
for some finite constant $Q_p$ that depends only on $p$; note that our choice of $p$ implies that $n^{1/p - 1+ \varepsilon} \to 0$. It only remains to verify that the integral is finite. To do this first use Minkowski's inequality to get
\begin{align*}
\left\| \nu X_1^+(x) \right\|_{p} &= \left\| 1(\mathscr{D}^- > x) \mathscr{D}^+ - E[ 1(\mathscr{D}^- > x) \mathscr{D}^+] \right\|_{p} \leq \left\| 1(\mathscr{D}^- > x) \mathscr{D}^+ \right\|_{p} + E[ 1(\mathscr{D}^- > x) \mathscr{D}^+] .
\end{align*}
Furthermore, for $x \geq 1$, 
\begin{align*}
	\left\| 1(\mathscr{D}^- > x) \mathscr{D}^+ \right\|_{p} &= \left( E\left[ 1(\mathscr{D}^- > x) (\mathscr{D}^+)^p \right] \right)^{1/p} \leq \left( E\left[ (\mathscr{D}^-/x)^{1+\kappa}  (\mathscr{D}^+)^p \right] \right)^{1/p} \\
	&= \left( E\left[ (\mathscr{D}^-)^{1+\kappa}  (\mathscr{D}^+)^p \right] \right)^{1/p} x^{-(1+\kappa)/p},
\end{align*}
while for $0 < x < 1$, $\left\| 1(\mathscr{D}^- > x) \mathscr{D}^+ \right\|_{p} = \left\| 1(\mathscr{D}^- \geq 1) \mathscr{D}^+ \right\|_{p} \leq \left( E[ \mathscr{D}^- (\mathscr{D}^+)^p ] \right)^{1/p}$. It follows that
$$\int_0^\infty \left\| \nu X_1^+(x) \right\|_p dx \leq \left( E[ \mathscr{D}^- (\mathscr{D}^+)^p ] \right)^{1/p} + \int_1^\infty \left( E\left[ (\mathscr{D}^-)^{1+\kappa}  (\mathscr{D}^+)^p \right] \right)^{1/p} x^{-(1+\kappa)/p} dx + E[ \mathscr{D}^- \mathscr{D}^+ ] < \infty.$$
The proof for $X_1^-(x)$ is obtained by exchanging the roles of $\mathscr{D}^-$ and $\mathscr{D}^+$.  
\end{proof}

\bibliographystyle{plain}
\bibliography{directeddistances}

\end{document}